\newtheorem{theorem}{Theorem}[section]
\newtheorem{lemma}[theorem]{Lemma}
\newtheorem{proposition}[theorem]{Proposition}
\newtheorem{definition}[theorem]{Definition}
\newtheorem{rmk}[theorem]{Remark}
\def\T{\mathbb T}
\def\R{{\mathbb R}}
\def\Z{{\mathbb Z}}
\def\la{\langle}
\def\ra{\rangle}
\def\les{\lesssim}
\def\1{{\bf 1}}
\def\eqnn{\begin{eqnarray*}}
\def\eeqnn{\end{eqnarray*}}
\def\eqn{\begin{eqnarray}}
\def\eeqn{\end{eqnarray}}
\newcommand{\nc}{\newcommand}
\nc{\be}{\begin{equation}}
\nc{\ee}{\end{equation}}
\nc{\ba}{\begin{eqnarray}}
\nc{\ea}{\end{eqnarray}}
\nc{\eps}{\epsilon}
\def\prf{\begin{proof}}
\def\endprf{\end{proof}}
\begin{document}

\title[Regularity properties of Zakharov system on $\R^+$]{Regularity properties of the Zakharov system on the half line} 
 
\author{{\bf M.~B.~Erdo\u gan, N.~Tzirakis}\\
University of Illinois\\
Urbana-Champaign}

\thanks{Email addresses: berdogan@math.uiuc.edu (B. Erdogan), tzirakis@math.uiuc.edu (N. Tzirakis)}\thanks{The first author was partially supported by NSF grant DMS-1501041. The second author's work was supported by a grant from the Simons Foundation (\#355523 Nikolaos Tzirakis).}

\date{}

\begin{abstract}
In this paper we study the local and global regularity properties of the Zakharov system on the half line with rough initial data. These properties include local and global wellposedness results, local and global smoothing results and the behavior of higher order Sobolev norms of the solutions.    Smoothing means that the nonlinear part of the solution on the half line is smoother than the initial data. The gain in regularity coincides with the gain that was observed for the periodic Zakharov \cite{et3} and the Zakharov on the real line. Uniqueness is proved in the class of smooth solutions. When the boundary value of the Schr\"odinger part  of the solution is zero, uniqueness can be extended to the full range of local solutions. Under the same assumptions on the initial data  we also prove global-in-time existence and uniqueness of energy solutions. For more regular data we prove that all higher Sobolev norms grow at most polynomially-in-time. 
\end{abstract}

\maketitle
\section{Introduction}
The Zakharov system is a system of non-linear partial differential equations, introduced by Zakharov in 1972, \cite{vz}.  It describes the propagation of Langmuir waves in an ionized plasma. The system   consists of a complex field $u$ (Schr\"odinger part) and a real field $n$ (wave part) satisfying the equation:
\begin{equation}\label{eq:zakharov}
\left\{
\begin{array}{l}
iu_{t}+  u_{xx} =nu, \,\,\,\,  x \in { \R}^+=(0,\infty), \,\,\,\,  t\in \R^+,\\
n_{tt}-n_{xx}=(|u|^2)_{xx}, \\
u(x,0)=g(x)\in H^{s_0}({ \R}^+), \\
n(x,0)=n_0(x)\in H^{s_1}({  \R}^+), \,\,\,\,n_t(x,0)=n_1(x)\in \hat H^{s_1-1}({ \R}^+),\\
u(0,t)=h(t)\in H^{\frac{2s_0+1}{4}}(\R^+),\,\,\,n(0,t)=f(t)\in H^{s_1}(\R^+),
\end{array}
\right.
\end{equation}
with the additional compatibility conditions $h(0)=g(0)$ when $s_0>\frac12$, $f(0)=n_0(0) $ when $s_1>\frac12$.
The compatibility conditions are necessary since the solutions we are interested in are    continuous space-time functions for $s>\frac12$. The function $u(x,t)$ denotes the slowly varying envelope of the electric field with a prescribed frequency and the real valued function $n(x,t)$ denotes the deviation of the ion density from the equilibrium.  

Smooth solutions of the Zakharov system posed on $\R $ or $\T$ obey the following conservation laws:
$$\|u(t)\|_{L^2(\Bbb T)}=\|u_{0}\|_{L^2(\Bbb T)}$$
and
$$E(u,n,\nu)(t)= \int_{\Bbb T}|\partial_{x}u|^2dx+\frac{1}{2}\int_{\Bbb T} n^2dx+\frac{1}{2}\int_{\Bbb T}\nu^2dx+\int_{\Bbb T}n|u|^2dx=E(u_{0},n_{0},n_{1}),$$
where $\nu$ is such that $n_t=\nu_{x}$ and $\nu_t=(n+|u|^2)_x$. These conservation laws identify $H^1 \times L^2 \times \hat H^{-1}$ as the energy space for the system. Here $\hat H^{-1}=\{\partial_x f:f\in L^{2}\}.$ 
In the case of the semi infinite strip the solution $u,n$ models waves that are generated at one end and propagate freely at the other. In this paper we continue our program initiated in  \cite{et4} of  establishing the regularity properties of nonlinear dispersive partial differential equations (PDE) on a half line  using the tools that are available in the case of the real line where the PDE are fully dispersive. To this end we extend the data into the whole line and use Laplace transform methods to   set up an equivalent integral equation (on $\R\times \R$) of the solution, see \eqref{eq:duhamel} below.  We analyze the integral equation  using  the restricted norm method and multilinear $L^2$ convolution estimates.

For the initial and boundary value problem and for nonzero boundary data as far as we know the wellposedness theory of the Zakharov system is unknown. In this paper we adapt the method of \cite{et4} to the Zakharov system to establish the wellposedness theory and essentially match the known results on $\R$.
We say $(s_0,s_1)$ is admissible if  $s_0\in (0,\frac52)\backslash \{\frac12,\frac32\}$, $s_1\in (-\frac12,\frac32)\backslash\{\frac12\}$ satisfy
$$ 0<s_0-s_1\leq 1, \,\,\,\,\,2s_0>s_1+\frac12>0.$$ 

\begin{definition} 
We say \eqref{eq:zakharov} is locally wellposed in $H^{s_0}(\R^+)\times H^{s_1}(\R^+)$, if for any $g\in H^{s_0}(\R^+)$,  $h\in H^{\frac{2s_0+1}{4}}(\R^+)$,  
$n_0\in H^{s_1}({  \R}^+)$, $n_1\in\hat H^{s_1-1}({ \R}^+)$, and $f\in H^{s_1}(\R^+)$,
  with the additional compatibility conditions mentioned above, the integral equation \eqref{eq:duhamel} below has a unique solution in
\be\label{def:lwpspace}
\big[X^{s_0,b}  \cap C^0_tH^{s_0}_x  \cap C^0_xH^{\frac{2_0+1}{4}}_t \big] \times \big[Y^{s_1,b} \cap C^0_tH^{s_1}_x  \cap C^0_xH^{s_1}_t \big],
\ee
for $b<\frac12$ and  for sufficiently small $T$ depending only on the norms of the 
  boundary and initial data.    
%Moreover, if $u$ and $v$ are two such solutions coming from different extensions $g_{e1}$, $g_{e2}$, then their restriction to $[0,\infty)\times[0,T]$ are the same. 
 Furthermore,   the solution depends continuously on the initial and boundary data. In the critical case, $s_0=s_1+1$, we replace the space $X^{s_0,b}$ with $X^{s_0,\frac12}$ and $Y^{s_1,b}$ with $Y^{s_1,\frac12+}$. For the definition of the spaces $X^{s_0,b}$ and $Y^{s_1,b}$ see the next section.
\end{definition}
 
\begin{rmk}
The different approach in the critical $s_0=s_1+1$, and non-critical case for the Zakharov system is by now a standard feature of the theory. In the critical case one is forced to work in $X^{s_0,\frac12}$ for the Schr\"odinger part of the solution. To prove continuity of the flow one needs an  estimate on a new norm, see \cite{GTV}. This is the $Z$ norm in Proposition \ref{prop:critical}. We also loose a small power in the time variable but we can close the iteration scheme by an additional gain coming from the nonlinear estimates. The situation is similar to the real line case and more details can be found in \cite{GTV}.
\end{rmk}
Below we provide a short summary of results on the wellposedness theory of the Zakharov system. A more comprehensive account is given in \cite{et3} and in the references therein.
%The solutions are wellposed in the sense of the following definition:
%Let $X, Y, Z$ be Banach spaces.   We say that the system of equations \eqref{eq:zakharov}  is locally wellposed in $H^{s_{0}}(\R)\times H^{s_{1}}(\R)\times H^{s_{1}-1}(\R)$, if for a given initial data $(u_{0}, n_{0}, n_{1}) \in H^{s_{0}}(\R)\times H^{s_{1}}(\R)\times H^{s_{1}-1}(\R)$, there exists $T=T(\|u_0\|_{H^{s_{0}}},\|n_0\|_{H^{s_{1}}},\|n_1\|_{H^{s_{1}-1}})>0$ and a unique solution $$(u,n,n_t) \in \left(X \cap C_{t}^{0}H_{x}^{s_0}([-T,T]\times \mathbb T),\, Y \cap C_{t}^{0}H_{x}^{s_1}([-T,T]\times \mathbb T),\, Z \cap C_{t}^{0}H_{x}^{s_1-1}([-T,T]\times \mathbb T)\right) .$$ We also demand that there is continuity with respect to the initial data in the appropriate topology. If $T$ can be taken to be arbitrarily large then we say that the problem is globally well-posed.
Recall that the Zakharov system is not scale invariant but it can be reduced to a simplified system like in \cite{GTV}, and one can then define a critical regularity. This is given by the pair $(s_{0},s_{1})=(\frac{d-3}{2},\frac{d-4}{2})$, which is   on the line $s_0=s_1+\frac12$.  

The restricted norm method was used to study the Zakharov system first in \cite{bc}.
Later in  \cite{GTV}  a comprehensive account of the local wellposedness theory  on $\R^d$ was given.   In dimensions $1$ and $2$, the lowest regularity for the system to have local solutions has been found to be $(s_{0},s_{1})=(0,-\frac{1}{2})$, \cite{GTV}, \cite{holmer}. It is harder to establish the global 
 solutions at  this level since there is no conservation law controling the wave part. On $\R$, this has been done in \cite{cht}.  This result is to be expected since the cubic NLS, which is the subsonic limit of the Zakharov system, is globally wellposed in one dimension. On $\R^2$, the cubic NLS is $L^2$ critical and the time of local existence depends not only on the norm of the initial data but also on their profile. In \cite{bhht} the authors prove $L^2$ wellposedness of the Zakharov system on $\R^2$. In addition they proved that the time of existence depends only on the norm of the initial data. In this sense the Zakharov system behaves better than its limit. However,  the local solutions cannot be extended globally-in-time, since we know that the $L^2$ solutions of the Zakharov system in two dimensions blow up, \cite{gmer,gmer1}.  On the other hand if the Schr\"odinger initial data have small $L^2$ norm, one can prove global wellposedness even with infinite energy data. For the details see \cite{fpz} and \cite{kishi1}.

For the periodic problem,  Bourgain proved that the problem is locally wellposed in the energy space using the restricted norm method, see \cite{jbz} and \cite{Bbook}.  Bourgain's result was extended in \cite{ht} to a  local wellposedness result in $H^{s_{0}}\times H^{s_{1}}\times H^{s_{1}-1}$ for  $s_1\geq 0$ and $\max(s_1, \frac{s_1}2+\frac12)\leq s_0\leq s_1+1 $.  A recent result, \cite{kishi}, establishes wellposedness in the case of the higher dimensional torus. The problem with Dirichlet boundary conditions has been considered in \cite{fla} and \cite{gm} in more regular spaces than the energy space.

The energy solutions exist both in $\Bbb R$ and $\Bbb T$ for all times due to the a priori bounds on the local theory norms. We should note that although the quantity $\int_{\R}n|u|^2dx$  has no definite sign it can be controlled using Sobolev inequalities by the $H^1$ norm of $u$ and the $L^2$ norm of $n$. This gives the following a priori bound, c.f. \cite{pecher},
\be\label{energybound}
\|u(t)\|_{H^1}+\|n(t)\|_{L^2}+\|n_t(t)\|_{H^{-1}}\les \|u(0)\|_{H^1}+\|n(0)\|_{L^2}+\|n_t(0)\|_{H^{-1}},\,\,\,\,\,\,t\in\R.
\ee
Unfortunately this is not the case for the equation \eqref{eq:zakharov} due to the presence of boundary conditions.

The main result  of this paper is the following theorem. The operators  $W_0^t$ and $V_0^t$ are the $u$ and $n$ parts of the solutions of the linear part of the system \eqref{eq:zakharov}, see Section 2 below.
\begin{theorem} \label{thm:local} For any admissible pair $(s_0,s_1)$ the equation \eqref{eq:zakharov}  is locally wellposed in    $H^{s_0}(\R^+)\times H^{s_1}(\R^+)$. Moreover, in the noncritical case when $s_0<s_1+1$,  we have the following smoothing bound
\begin{align*}
&u-W_0^t(g,h)\in C^0_tH^{s_0+a_0}([0,T]\times \R^+) \\
&n-V_0^t(n_0,n_1,f)\in C^0_tH^{s_1+a_1}([0,T]\times \R^+),
\end{align*}
for any $a_0<  \min\left(\frac12, s_1+\frac12,s_1-s_0+1, \frac52-s_0\right)$  and $a_1 < \min\left(s_0-s_1,2s_0-s_1-\frac12,\frac32-s_1\right)$. In the case $s_0=s_1+1$, we have the one sided smoothing
$$n-V_0^t(n_0,n_1,f)\in C^0_tH^{s_1+a_1}([0,T]\times \R),$$
for any $a_1< \min\left(1, \frac32-s_1\right)$.
\end{theorem}
  
\begin{rmk} For the Zakharov on the torus a similar smoothing estimate was proved in \cite{et3}. Following the method there one can prove similar estimates for the Zakharov system on the real line. A global smoothing estimate on $\R$ for data close to the energy space was given in \cite{pecher}. More recently smoothing on $\R^d$ was obtained in \cite{compaan}. 
\end{rmk}
To prove the above theorems we rely on the Duhamel formula of the nonlinear system adapted to the boundary conditions,  which expresses the nonlinear solution as the superposition of the linear evolutions which incorporate the boundary and the initial data with the nonlinearity. Thus, we first solve two linear problems \eqref{linearnls} and \eqref{eq:zak_lin} by a combination of Fourier and Laplace transforms, \cite{et4}, \cite{bonaetal}, after extending the inital data to the whole line. The idea is then to use the restricted norm method as in \cite{collianderkenig}, \cite{holmer1}, \cite{et4} in the Duhamel formula. The uniqueness of the solutions thus constructed is not immediate since we do not know that the fixed points of the Duhamel operators have restrictions on the half line which are independent of the extension of the data. To accomplish that we need some a priori bounds on the solutions. Such a priori estimates for the difference of two solutions and smooth data is a classical result, see \cite{aa1}, \cite{aa2}, \cite{ss}, \cite{GZZ}. To prove uniqueness for rougher data we use an approximation argument that relies on the existence of global smooth solutions. Unfortunately,  
the local conservation identities  leads to useful a priori global bounds only when $h=0$. Under this additional assumption, we obtain  global energy solutions and uniqueness in the full range.   We summarize our results in the following two theorems:

\begin{theorem} \label{thm:unique} For any admissible pair $(s_0,s_1)$ satisfying $s_0\geq 2$, $s_1\geq 1$, the solution of \eqref{eq:zakharov} given by Theorem~\ref{thm:local} is independent of the extensions of the initial data. The same statement also holds for the remaining admissible indices provided that $h=0$.  
\end{theorem}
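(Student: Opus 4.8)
The plan is to recast the statement as a uniqueness assertion for the initial--boundary value problem \eqref{eq:zakharov} on $\R^+$. By construction the restriction to $\R^+$ of any solution produced by Theorem~\ref{thm:local} solves \eqref{eq:zakharov} with the prescribed data $g,h,n_0,n_1,f$; since the boundary data $h,f$ are part of the problem and are untouched by the extension, two solutions $(u_1,n_1)$ and $(u_2,n_2)$ coming from two different extensions of $g,n_0,n_1$ share the same boundary traces. Hence it suffices to show that the differences $w:=u_1-u_2$ and $p:=n_1-n_2$, which have vanishing initial data and vanishing boundary values $w(0,t)=0$, $p(0,t)=0$, must vanish on $\R^+\times[0,T]$.

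For the smooth range $s_0\geq 2$, $s_1\geq 1$ I would argue directly by an energy estimate for the difference system
$$iw_t+w_{xx}=n_1 w+pu_2,\qquad p_{tt}-p_{xx}=\partial_{xx}\big(w\bar u_1+u_2\bar w\big).$$
Pairing the Schr\"odinger equation with $\bar w$ and taking imaginary parts, and pairing the wave equation with the multiplier dictated by the potential formulation $p_t=\partial_x\nu$ from the Introduction, I integrate over $\R^+$. The decisive point is that every boundary contribution produced by integration by parts at $x=0$ carries a factor $w(0,t)$, $p(0,t)$ or $p_t(0,t)$, all of which vanish; one is thus left with a closed differential inequality $\frac{d}{dt}E(t)\lesssim C\big(\|u_i\|,\|n_i\|\big)\,E(t)$ for an energy $E$ controlling $w$ and $p$ in the relevant norms, with $E(0)=0$. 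Gronwall forces $E\equiv 0$. This is the classical a priori estimate for differences of smooth solutions and I would record it only schematically, referring to \cite{aa1,aa2,ss,GZZ}; at this regularity no global information is needed, which is why this case is unconditional.

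For the remaining admissible indices I would run an approximation argument, and here $h=0$ is essential. I approximate the rough data by smooth data respecting the compatibility condition (which for $h=0$ reduces to $g(0)=0$), apply the two fixed extension operators, and solve to obtain smooth solutions. By the smooth case these agree on $\R^+$ independently of the extension; by the continuous dependence asserted in Theorem~\ref{thm:local} they converge to the two rough solutions, so passing to the limit yields $u_1=u_2$ and $n_1=n_2$ on $\R^+$. The genuine difficulty is that the smooth approximants must be available, with controlled norms, on the entire interval $[0,T]$ on which the rough solution lives: the smooth local existence time degenerates as the smooth norms blow up along the approximation, so local theory alone does not suffice. This gap is closed by global existence of smooth solutions, which rests on the energy a priori bound \eqref{energybound}.

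The step I expect to be the main obstacle is exactly this global a priori control. When $h\neq 0$ the local conservation identities pick up uncontrolled boundary fluxes at $x=0$, so \eqref{energybound} is unavailable, the smooth solutions cannot be continued globally, and the approximation breaks down. This is precisely why the second assertion is restricted to $h=0$, whereas the smooth-data assertion, needing only the local energy estimate for the difference, holds for every admissible pair.
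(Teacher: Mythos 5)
Your two-step strategy is the same as the paper's (Sections 5--7): an energy estimate for the difference of two smooth solutions, whose boundary terms vanish because the two solutions share the traces $h,f$; then, for rough data, an approximation argument hinging on global existence of smooth solutions when $h=0$. You also correctly locate why $h=0$ is needed (although the a priori bound actually used is Proposition~\ref{prop:energy}, proved on the half line in Section 6 --- the paper notes explicitly that \eqref{energybound} itself is unavailable for \eqref{eq:zakharov}). Within the smooth step, however, your claim that the pairing produces a closed inequality $\frac{d}{dt}E\lesssim E$ for an energy controlling $\|w\|_{H^1}$, $\|p\|_{L^2}$, $\|\nu\|_{L^2}$ is not correct as stated: the interior term $\Im\int_0^\infty \overline{w}_{xx}\,p\,u_2\,dx$ produced by your own splitting cannot be bounded by such an energy (after integration by parts it involves $p_x$, which nothing controls). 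This is precisely why the paper's functional $L$ includes the sign-indefinite cubic term $\int_0^\infty p\,\big(|u_1|^2-|u_2|^2\big)\,dx$, whose time derivative cancels that term; what survives is not a pure Gronwall inequality but
$$\alpha(t)\lesssim \int_0^t\alpha(t')\,dt' + \sqrt{\alpha(t)}\,\Big(\int_0^t\alpha(t')\,dt'\Big)^{1/2},$$
which still forces $\alpha\equiv 0$. So the smooth case needs the Zakharov-type corrected energy, not the naive one.

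The more serious gap is in the approximation step. The theorem quantifies over \emph{arbitrary} extensions $\tilde g,\tilde{\tilde g}$ (and likewise for $n_0,n_1$), but you extend the smooth approximants by ``the two fixed extension operators''. Continuous dependence then gives convergence of the smooth solutions to the solutions generated by the canonical extensions of the rough data, not to the two given rough solutions, which are fixed points of the Duhamel operators built from $\tilde g$ and $\tilde{\tilde g}$. Identifying those limits with the given rough solutions is exactly the statement being proved, so at this point your argument is circular. The paper closes this with Lemma~\ref{lem:ext}: given an arbitrary rough extension $g_e$ of $g$ and smooth $g_k\to g$ in $H^{s_0}(\R^+)$, one can construct $H^k(\R)$ extensions of the $g_k$ converging to $g_e$ itself in $H^r(\R)$ for every $r<s_0$ (truncation away from the origin plus mollification, using Lemma~\ref{lem:Hs0}); this is also why the paper first reduces to the range $0<s_0<\frac12$, $-\frac12<s_1<0$, where that lemma applies. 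With this ingredient inserted, your limiting argument goes through as you describe.
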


\begin{theorem} \label{thm:global} For any admissible pair $(s_0,s_1)$ satisfying $s_0\geq 1$, $s_1\geq 0$, the solution of \eqref{eq:zakharov} given by Theorem~\ref{thm:local} can be extended to a solution   in 
$$
\left[C^0_{t\in[0,T]}H^{s_0}_{x\in\R^+} \cap C^0_{x\in\R^+} H^{\frac{2s_0+1}{4}}_{t\in[0,T]}\right] \times \left[C^0_{t\in[0,T]}H^{s_1}_{x\in\R^+}  \cap C^1_{t\in[0,T]} \hat H^{s_1-1}_{x\in\R^+} \cap  C^0_{x\in\R^+} H^{s_1}_{t\in[0,T]}\right]  
$$
 for any $T>0$ provided that  $h=0$. Furthermore, the smoothing given by Theorem~\ref{thm:local} remains valid, and $\|u\|_{H^{s_0}(\R^+)}$, $\|n\|_{H^{s_1}(\R^+)}$, $\|n_t\|_{\hat H^{s_1-1}(\R^+)}$ can grow at most polynomially-in-time.
\end{theorem}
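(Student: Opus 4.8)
The plan is to extend the local solution of Theorem~\ref{thm:local} to all times by first proving an a priori bound for the energy-space norms $\|u(t)\|_{H^1}+\|n(t)\|_{L^2}+\|n_t(t)\|_{\hat H^{-1}}$, then iterating the local theory on time intervals of a fixed length, and finally bootstrapping the smoothing estimate to control the higher Sobolev norms. The hypothesis $h=0$ enters precisely in the a priori step: it is what renders the boundary contributions in the energy identity harmless, which is exactly the obstruction flagged in the discussion around \eqref{energybound}.

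To obtain the a priori bound I would work first with smooth solutions, for which all the manipulations below are justified, and recover the general energy-space statement at the end by approximating the data by smooth data and passing to the limit using the continuous dependence built into the local theory. Differentiating $\|u(t)\|_{L^2(\R^+)}^2$ and using $iu_t+u_{xx}=nu$, the bulk terms cancel as on $\R$ and the only boundary term at $x=0$ carries a factor $\bar u(0,t)u_x(0,t)$; since $u(0,t)=h(t)=0$, the $L^2$ mass is exactly conserved. Differentiating the energy functional $E(u,n,\nu)$ from the introduction, restricted to $\R^+$, the bulk terms again cancel and one is left with boundary terms at $x=0$. Every such term containing a factor $u(0,t)$ or $u_x(0,t)\bar u(0,t)$ is killed by $h=0$, and the surviving terms involve only the wave boundary data $n(0,t)=f(t)$ together with traces of the solution that are controlled by the local-theory norms and by the finite quantity $\|f\|_{H^{s_1}(\R^+)}$. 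Dominating the indefinite term $\int_{\R^+}n|u|^2\,dx$ by $\tfrac1{10}\|u_x\|_{L^2}^2$ plus lower-order terms governed by the conserved mass $\|u\|_{L^2}$ and $\|n\|_{L^2}$ through Gagliardo--Nirenberg, exactly as in \eqref{energybound}, and closing with Gr\"onwall, yields a bound on $\|u(t)\|_{H^1}+\|n(t)\|_{L^2}+\|n_t(t)\|_{\hat H^{-1}}$ on each finite interval $[0,T]$ depending only on the data and on $T$.

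With this bound the local time of existence depends only on the energy-space norm, which stays controlled, so I would iterate Theorem~\ref{thm:local} on consecutive intervals $[kT_0,(k+1)T_0]$ of fixed length $T_0$ to produce a global solution at the energy level $(s_0,s_1)=(1,0)$; uniqueness on each interval is inherited from Theorem~\ref{thm:unique} in its $h=0$ form. For the higher norms the smoothing of Theorem~\ref{thm:local} is essential, since a direct Gr\"onwall argument on the bilinear nonlinearity would only give exponential growth. Instead, differentiating $\|u(t)\|_{H^{s_0}}^2$ and $\|n(t)\|_{H^{s_1}}^2$, the leading resonant contribution of the nonlinearity cancels (for the Schr\"odinger part because $n$ is real, with an analogous mechanism for the wave part), and this is the same cancellation responsible for the smoothing; the multilinear estimates then bound the remaining contribution over each unit step by the now-bounded energy norm times a strictly sublinear power of the current high norm. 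The resulting differential inequality integrates to at-most-polynomial-in-time growth, and since the data fed into each step stay controlled, the local smoothing persists on every interval, giving the global smoothing claim.

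The step I expect to be the main obstacle is the a priori energy estimate on the half line. Unlike on $\R$ or $\T$ the energy is genuinely not conserved, so one must identify all the boundary terms produced by the integrations by parts and show that precisely those not killed by $h=0$ are controllable by the boundary data via the local-theory trace and space-time norms. Making the formal energy identity rigorous at the low wave regularity $(n,n_t)\in L^2\times\hat H^{-1}$, rather than only for smooth solutions, and controlling the low-regularity traces without recourse to classical pointwise values, is the delicate part on which the whole globalization rests.
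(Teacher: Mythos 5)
Your overall skeleton (a priori bound at the energy level for smooth solutions, recovered for rough data by approximation; iteration of the local theory; smoothing-based polynomial growth of the higher norms) is the same as the paper's, but there is a genuine gap at exactly the step you yourself flag as the main obstacle, and your proposed resolution of it does not work. With $h=0$, integrating the energy identity \eqref{eq:e} over $[0,\infty)\times[0,t]$ leaves the surviving boundary term
$$
E(t)-E(0)=-\int_0^t \nu(0,s)\, f(s)\, ds, \qquad \nu=\partial_x^{-1}n_t .
$$
The trace $\nu(0,s)$ is \emph{not} controlled by $E(s)$: the energy controls $\|\nu(s)\|_{L^2_x(\R^+)}$, which says nothing about the boundary value, so the Gr\"onwall scheme you describe cannot even be set up. Appealing instead to ``local-theory norms'' is circular: on each time step those norms are bounded only in terms of the energy at the start of the step, and the local theory as stated does not provide a trace estimate for $\nu$ at all. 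Even if one patched this in, the resulting discrete iteration (increment of $E$ per unit step controlled by $\|f\|_{L^2([k,k+1])}$ times powers of $E_k$) closes at best to superpolynomial bounds, which would also destroy your final claim: the polynomial growth of $\|u\|_{H^{s_0}}$, $\|n\|_{H^{s_1}}$ via smoothing requires the energy norm to remain (essentially uniformly) bounded in time.

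The idea missing from your proposal is the paper's third local identity, a momentum/flux identity in the $x$ variable,
$$
\partial_x\big(|u_x|^2+n^2+\nu^2\big)=2[\nu n]_t-2i\big[(u\overline{u_x})_t-(u\overline{u_t})_x\big],
$$
i.e.\ \eqref{eq:l}. Integrating it over the quarter plane $[0,\infty)\times[0,t]$ and discarding the positive left-hand terms yields a Kato-smoothing-type bound on precisely the problematic trace,
$$
J:=\int_0^t|\nu(0,s)|^2\,ds \ \les\ E(t)+E(0)+1,
$$
with constants depending only on the data. Combined with the Cauchy--Schwarz estimate $E(t)-E(0)\les \|f\|_{L^2}\,J^{1/2}$, this produces the self-improving inequality $E(t)-E(0)\les \sqrt{E(t)+E(0)+1}$, which closes---with no Gr\"onwall argument---to the \emph{uniform-in-time} bound $E(t)\les E(0)+1$ of Proposition~\ref{prop:energy}. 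That uniform bound is what drives both the global iteration and the polynomial growth of higher Sobolev norms. Without this flux identity (or some substitute boundary-trace estimate of the same strength), your proof does not close.
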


We now discuss briefly the organization of the paper.  In Section 2,   we construct the solutions of the linear problems and set up the Duhamel formulas of the full system. The Duhamel formula incorporates the extension of the data on $\Bbb R$ and the evaluation of certain operators at the zero boundary. We note that the solution is constructed on $\Bbb R$ but its restriction on $\R^+$ satisfies the PDE in an appropriate sense.

In Section 3 we introduce the restricted norm method and prove the linear and nonlinear estimates (in the non-critical case) for the components of the solution that were defined in Section 2.   The estimates for the critical case are given in Section 4. Section 5 contains the description of the iteration that leads to the proof of Theorem \ref{thm:local}. We also provide the proof of uniqueness in the case of smooth solutions. In Section 6 under the assumption that the boundary term of the Schr\"odinger part of the solution is zero we obtain a priori bounds on the energy space and thus we show how the local solutions can be extended to global ones with initial data on the energy space or smoother. This result is then used in Section 7 where uniqueness in the case of rough initial data is proved by an approximation argument. The last section, Section 8, is an Appendix where we state two calculus lemmas that we use throughout the paper.

We finish this section by introducing some notation. We also define the solution spaces that give meaning to the definition of wellposedness.

\subsection{Notation}

\begin{align*}
\widehat g(\xi)&=\mathcal F g(\xi)=\int_{\R^n} e^{-ix\cdot\xi} g(x) dx.\\
\la \xi\ra&=\sqrt{1+|\xi|^2}.\\
\|g\|_{H^s}&=\|g\|_{H^s(\R)}=\Big(\int_\R \la \xi\ra^{2s} |\widehat g(\xi)|^2 d\xi\Big)^{1/2}.
\end{align*}

For   $s>-\frac12$, we define   the space  $$H^s(\R^+) =
\{g\in \mathcal D(\R^+): \exists \tilde g \in H^s(\R) \text{ so that } \tilde g \chi_{(0,\infty)} =g \},
$$
with the norm
$$
\|g\|_{H^s(\R^+)}:=\inf\big\{\|\tilde g\|_{H^s(\R)}:  \tilde g \chi_{(0,\infty)} =g \big\}.
$$
The restriction $s>-1/2$ is necessary since multiplication with $\chi_{(0,\infty)}$ is not well-defined for $s\leq -1/2$.  
 
For $s>-3/2$,  we define $\hat H^{s}(\R^+)=\{\partial_x f:f\in H^{s+1}(\R^+)\} $ with the norm   
$$
\|\partial_x f\|_{\hat H^{s}(\R^+)}=\|f\|_{H^{s+1}(\R^+)}. 
$$ 
We also define $\hat H^s(\R)$ analogously.

We denote the linear Schr\"odinger propagator (for $g\in L^2(\R)$) by
$$
e^{it\partial_{xx}} g(x)= \mathcal F^{-1}\big[e^{-it|\cdot|^2} \widehat g(\cdot)\big](x).
$$
We will similarly use the notation $e^{\pm t\partial_x}$ for the linear wave propagators on the whole line.

For a space time function $f$, we denote
$$
D_0f(t)=f(0,t).
$$
Finally, we  reserve the notation $\eta(t)$ for a smooth compactly supported function which is equal to $1$ on $[-1,1]$. 

\section{Notion of a solution} \label{sec:defin}
 
We start with some remarks on $H^s(\R^+)$ and $\hat H^s(\R^+)$ spaces. 
First, given $g  \in  H^s(\R^+)$ for some $s>\frac12$, take an extension $\tilde g\in H^s(\R)$. By Sobolev embedding    $\tilde g $ is continuous on $\R$, and hence $g(0)$ is well defined.   

Second, any $H^{s+1}(\R)$  extension $\tilde f$ of $f\in H^{s+1}(\R^+)$ gives an extension of $g=\partial_x f\in \hat H^s(\R^+)$ to $\R$ by
$$
\tilde g=\partial_x \tilde f\in \hat H^s(\R).
$$
Note that $\tilde g $ and $g$ agree as distributions on $\R^+$.
 
We have the following lemma from \cite{et4} (also see \cite{collianderkenig}) concerning extensions of $H^s(\R^+)$ functions. 
\begin{lemma}\label{lem:Hs0} Let $h\in H^s(\R^+)$ for some $-\frac12< s<\frac52$. \\
i) If $-\frac12< s<\frac12$, then $\|\chi_{(0,\infty)}h\|_{H^s(\R)}\les \|h\|_{H^s(\R^+)}$.\\
ii) If $\frac12<s<\frac32$  and $h(0)=0$, then $\|\chi_{(0,\infty)}h\|_{H^s(\R)}\les \|h\|_{H^s(\R^+)}$. 
\end{lemma}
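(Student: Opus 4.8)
The statement is a standard extension-by-zero bound for fractional Sobolev spaces on the half line, and the natural approach is to prove the two cases separately, reducing both to the behavior of the Hilbert-type transform / multiplication by $\chi_{(0,\infty)}$ in terms of fractional derivatives. First I would fix an extension $\tilde h\in H^s(\R)$ with $\|\tilde h\|_{H^s(\R)}\le 2\|h\|_{H^s(\R^+)}$, so that $\chi_{(0,\infty)}h=\chi_{(0,\infty)}\tilde h$ as distributions on $\R$; thus it suffices to show that pointwise multiplication by $\chi_{(0,\infty)}$ is bounded on $H^s(\R)$ in the relevant range, since the infimum defining $\|h\|_{H^s(\R^+)}$ then passes through. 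The whole problem is therefore reduced to: \emph{for which $s$ is $\tilde h\mapsto \chi_{(0,\infty)}\tilde h$ bounded on $H^s(\R)$?}

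\textbf{Case i), $-\frac12<s<\frac12$.} Here the plan is to use the characterization of $H^s(\R)$ for $|s|<\frac12$ via the Gagliardo (Sobolev--Slobodeckij) seminorm
\[
\|\tilde h\|_{H^s(\R)}^2 \sim \|\tilde h\|_{L^2}^2 + \int_\R\int_\R \frac{|\tilde h(x)-\tilde h(y)|^2}{|x-y|^{1+2s}}\,dx\,dy,
\]
together with the equivalence of $H^s$ and $\dot H^s\cap L^2$ in this range. The key estimate is that for the characteristic function one controls
\[
\int_\R\int_\R \frac{|\chi_{(0,\infty)}(x)\tilde h(x)-\chi_{(0,\infty)}(y)\tilde h(y)|^2}{|x-y|^{1+2s}}\,dx\,dy
\]
by splitting into the region where $x,y$ have the same sign (where the jump disappears and one is left with the Gagliardo integral of $\tilde h$ itself) and the region of opposite sign; the opposite-sign contribution is the delicate piece, but for $s<\frac12$ the singularity $|x-y|^{-1-2s}$ is integrable enough that a one-dimensional Hardy inequality closes it. This is exactly the content that makes multiplication by $\chi_{(0,\infty)}$ bounded on $H^s$ precisely when $|s|<\frac12$, and the restriction $s<\frac12$ is sharp because a jump discontinuity sits exactly at the borderline of $H^{1/2}$.

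\textbf{Case ii), $\frac12<s<\frac32$ with $h(0)=0$.} Now $\chi_{(0,\infty)}\tilde h$ generically has a jump at the origin of size $\tilde h(0)$, which would cost exactly half a derivative and obstruct an $H^s$ bound for $s>\frac12$; the hypothesis $h(0)=0$ is what removes this jump. The plan is to reduce to Case i) by differentiating: for $\frac12<s<\frac32$ one has $\partial_x(\chi_{(0,\infty)}\tilde h)=\chi_{(0,\infty)}\tilde h' + \tilde h(0)\,\delta_0$, and since $h(0)=0$ (so $\tilde h(0)=0$, using that $s>\frac12$ makes point evaluation well defined and independent of extension) the Dirac mass vanishes, leaving $\partial_x(\chi_{(0,\infty)}\tilde h)=\chi_{(0,\infty)}\tilde h'$. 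Then I would apply Case i) to $\tilde h'\in H^{s-1}(\R)$ with exponent $s-1\in(-\frac12,\frac12)$, obtaining $\|\chi_{(0,\infty)}\tilde h'\|_{H^{s-1}}\lesssim \|\tilde h'\|_{H^{s-1}}\lesssim\|\tilde h\|_{H^s}$, and combine with an $L^2$ bound on $\chi_{(0,\infty)}\tilde h$ to upgrade from the homogeneous seminorm to the full $H^s$ norm. Taking the infimum over extensions gives the claim.

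\textbf{Main obstacle.} The routine parts are the reductions; the genuine work is the opposite-sign Gagliardo estimate in Case i), i.e.\ showing that the jump of $\chi_{(0,\infty)}$ across the origin is harmless when $s<\frac12$. I expect that step to rely on a Hardy inequality of the form $\int_0^\infty \big(\frac{1}{x}\int_0^x |\tilde h|\big)^2\,\cdots$ or, equivalently, on the boundedness of the operator whose symbol is the difference between multiplication by $\chi_{(0,\infty)}$ and its averaged version; controlling the boundary layer near $x=0$ uniformly is the only place where the strict inequality $s<\frac12$ is essential. Everything else—passing to a near-optimal extension, the differentiation identity, and reassembling the norm—is bookkeeping, and since the lemma is quoted from \cite{et4} I would in practice cite that reference rather than reprove these estimates in full.
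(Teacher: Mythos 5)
Your overall strategy---reduce to boundedness of multiplication by $\chi_{(0,\infty)}$ on $H^s(\R)$, prove this for $0\le s<\frac12$ via the Gagliardo seminorm plus a Hardy inequality for the opposite-sign region, and obtain case ii) by differentiating and using $h(0)=0$ to kill the Dirac mass---is the standard route for this lemma. Note that the paper itself offers no proof at all: it simply quotes the lemma from \cite{et4} (see also \cite{collianderkenig}), so there is no internal argument to compare against, and your sketch is essentially the argument found in those sources.

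There is, however, a genuine gap in your case i). You invoke ``the characterization of $H^s(\R)$ for $|s|<\frac12$ via the Gagliardo seminorm,'' but no such characterization exists for $-\frac12<s<0$: Sobolev--Slobodeckij seminorms describe $H^s$ only for $0<s<1$, and for negative $s$ an element of $H^s$ need not even be a locally integrable function, so pointwise difference quotients are meaningless and multiplication by $\chi_{(0,\infty)}$ must first be \emph{defined}, not merely estimated. This negative range is not decorative: the paper applies the lemma with $s=s_1\in(-\frac12,0)$ (for instance in \eqref{eq:temp12}), and your own case ii) needs it, since for $\frac12<s<1$ you apply case i) to $\tilde h'\in H^{s-1}$ with $s-1\in(-\frac12,0)$. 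The missing ingredient is a short duality argument: for $0<\sigma<\frac12$ and Schwartz $f,g$ one has $\langle \chi_{(0,\infty)}f,g\rangle=\langle f,\chi_{(0,\infty)}g\rangle$, hence $\|\chi_{(0,\infty)}f\|_{H^{-\sigma}}=\sup_{\|g\|_{H^{\sigma}}=1}|\langle f,\chi_{(0,\infty)}g\rangle|\lesssim\|f\|_{H^{-\sigma}}$ by the positive-exponent case already proved, and the operator then extends to $H^{-\sigma}$ by density. With this inserted (together with the corresponding approximation argument justifying the identity $\partial_x(\chi_{(0,\infty)}\tilde h)=\chi_{(0,\infty)}\tilde h'$ when $\tilde h'$ is only a distribution of negative order), your proof closes; the remaining steps---the opposite-sign Hardy estimate for $0<s<\frac12$, the norm equivalence $\|f\|_{H^s}\sim\|f\|_{L^2}+\|f'\|_{H^{s-1}}$, the vanishing of $\tilde h(0)$ for any extension when $s>\frac12$, and the infimum over extensions---are all correct as sketched.
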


To construct the solutions of \eqref{eq:zakharov} we first consider the linear Schr\"odinger and linear wave equations on $\R^+$ with boundary data. 
For the linear Schr\"odinger,
\be \label{linearnls}\left\{\begin{split}
&iu_t+u_{xx} =0,\,\,\,\,x\in\R^+, t\in \R^+,\\
&u(x,0)=g(x)\in H^s(\R^+), \,\,\,\,u(0,t)=h(t)\in H^{\frac{2s+1}4}(\R^+),  
\end{split}\right.\ee
with the compatibility condition $h(0)=g(0)$ for $s>\frac12$, we refer the reader to \cite{et4}, also see \cite{bonaetal}.  Note that the uniqueness of the solutions of equation \eqref{linearnls} follows  by considering the equation with $g=h=0$ with the method of odd extension. Given extension $g_e$ of $g$,  we denote the solution by  $W_{0}^t(g_e,h)$, which can be written as
$$
W_0^t(g_e,h)=W_0^t(0,h-p)+e^{it\partial_{xx}} g_e,
$$
where $p(t)= \eta(t) [e^{it\partial_{xx}}  g_e]\big|_{x=0}$, which is well-defined and is in  $H^{\frac{2s+1}{4}}(\R^+)$ by Lemma~\ref{lem:kato} below.
The properties of the boundary operator 
$W_0^t(0,h)$ were established in \cite{et4}. We state the results we need in the next section.
It is important to recall that $W_0^t(g_e,h)$ is defined on $\R\times \R$ but satisfies \eqref{linearnls} on $\R^+\times [0,1]$. Moreover its restriction to $\R^+$  is independent of the extension $g_e$.

We now consider the  linear wave problem:
\begin{equation}\label{eq:zak_lin}
\left\{
\begin{array}{l}
n_{tt}-n_{xx} =0,\,\,\,\,x,t\in\R^+, \\
n(x,0)=n_0(x) \in H^{s_1}(\R^+),\,\,\,\,\, n_t(x,0)=n_1(x)\in \hat H^{s_1-1}(\R^+),\\
n(0,t)=f(t)\in H^{s_1}(\R^+).
\end{array}
\right.
\end{equation}
First note that the restriction of 
\be\label{def:V0t}
 V_0^t(0, f)(x):= [\chi_{(0,\infty)}f](t-x)
 \ee
 to $\R^+$  is the solution of \eqref{eq:zak_lin} when $n_0=n_1=0$. 

As above, let $n_{e0}$ and $n_{e1}$ be extensions of $n_0$ and $n_1$ to $\R$ with the property that 
$$
\|n_{e0}\|_{H^{s_1}(\R)}\les \|n_0\|_{H^{s_1}(\R^+)}, \,\,\,\,\,\|n_{e1}\|_{\hat H^{s_1-1}(\R)}\les \|n_1\|_{\hat H^{s_1-1}(\R^+)}.
$$
We define
$$
\psi_{\pm}(x)=n_{e0}(x)\pm \partial_x^{-1} n_{e1}(x) \in H^{s_1}(\R),
$$
Let $V_0^t(\psi^{\pm},f)$ be defined on $\R\times [0,1]$ by 
\be\label{def:V1t}
V_0^t(\psi^{\pm}, f)(x)  =  \frac12  \left[e^{  t \partial_x} \psi_{+}+ e^{- t \partial_x} \psi_{-}\right] +  V_0^t(0, f-r)(x)
\ee
where 
\be\label{def:r}
r(t)= \frac12 \eta(t)\left(D_0 e^{   t \partial_x} \psi_{+} + D_0 e^{-   t \partial_x} \psi_{-}\right). 
\ee
Note that  the restriction of $ V_0^t(\psi^{\pm}, f) $ to $\R^+\times[0,1]$ is the solution of  \eqref{eq:zak_lin}.

Now we write a system of integral equations equivalent to  \eqref{eq:zakharov} on $[0,T]$, $T<1$:
\begin{equation}\label{eq:duhamel} \left\{
\begin{array}{l}
u(t)= \eta(t) W_0^t\big(g_e, h  \big) -i \eta(t) \int_0^t e^{i(t- t^\prime)\partial_{xx}}   F(u,n) \,d t^\prime  +i\eta(t) W_0^t\big(0,  q  \big), \\
n(t)= \eta(t) V_0^t\big(\psi^{\pm}, f\big) + \frac12\eta(t) (n_+ +  n_-) -\frac12 \eta(t) V_0^t(0,z), 
\end{array}\right.
\end{equation}
where
\be \label{def:F}
F(u,n)=\eta(t/T) n u,\,\,\,\, \text{ and }\,\, q(t)=  \eta(t ) D_0\Big(\int_0^t e^{i(t-t^\prime)\partial_{xx}}  F(u,n)\, d t^\prime \Big).
\ee
\be\label{def:npm}
n_\pm =   \pm  \int_0^t e^{\pm  (t-t^\prime) \partial_x} G(u )dt^\prime,\,\,\,\,G(u ):=\eta(t/T)   \partial_x  |u|^2, 
\ee
\begin{multline}\label{def:z}
z(t)=  \eta(t) D_0(n_++n_-)  \\=   \eta(t)\left(D_0\Big(\int_0^t e^{   (t- t^\prime)\partial_x} G(u ) dt^\prime \Big)-D_0\Big(\int_0^t e^{- (t- t^\prime)\partial_x} G(u ) dt^\prime \Big) \right).
\end{multline}

In what follows we will prove that the  integral equation \eqref{eq:duhamel} has a unique solution in a suitable Banach space on $\R\times \R$ for some $T<1$. Using the definition of the boundary operators, it is clear that the restriction of $u$ to $\R^+\times [0,T]$ satisfies  \eqref{eq:zakharov} in the distributional sense. Also note that the smooth solutions of \eqref{eq:duhamel} satisfy \eqref{eq:zakharov} in the classical sense.

We work with the spaces $X^{s,b}(\R\times\R)$  \cite{bourgain,Bou2}:
\be\label{def:xsb}
\|u\|_{X^{s,b}}=\big\| \widehat{u}(\tau,\xi)\la \xi\ra^{s} \la \tau+\xi^2\ra^{b} \big\|_{L^2_\tau L^2_\xi}=\big\|e^{-it\partial_{xx}}u\big\|_{H^s_xH^b_t},
\ee
we also define the wave equivalent of the  $X^{s,b}$ space:
\be \label{def:ysb}
\|n\|_{Y^{s,b}}=\inf_{n=n_+ + n_-} \left(\|n_+\|_{Y_+^{s,b}}+ \|n_-\|_{Y_-^{s,b}} \right),
\ee
where 
$$
\|n \|_{Y_\pm^{s,b}} = \big\| \widehat{n }(\tau,\xi)\la \xi\ra^{s} \la \tau\mp  \xi \ra^{b} \big\|_{L^2_\tau L^2_\xi}=\big\| e^{\mp t\partial_x} n  \big\|_{H^s_xH^b_t}.
$$
 
We  recall the embedding $X^{s,b}, Y^{s,b}\subset C^0_t H^{s}_x$ for $b>\frac{1}{2}$ and the following inequalities from \cite{bourgain,GTV}. 
 
For any $s,b\in \R$ we have
\begin{equation}\label{eq:xs1}
\|\eta(t) e^{it\partial_{xx}} g\|_{X^{s,b}}\les \|g\|_{H^s}.
\end{equation}
For any $s\in \mathbb R$,  $0\leq b_1<\frac12$, and $0\leq b_2\leq 1-b_1$, we have
\begin{equation}\label{eq:xs2}
\Big\| \eta(t) \int_0^t  e^{i(t-t^\prime)\partial_{xx}}  F(t^\prime ) dt^\prime \Big\|_{X^{s,b_2} }\lesssim   \|F\|_{X^{s,-b_1} }.
\end{equation}
Moreover, for $T<1$, and $-\frac12<b_1<b_2<\frac12$, we have
\begin{equation}\label{eq:xs3}
\|\eta(t/T) F \|_{X^{s,b_1}}\les T^{b_2-b_1} \|F\|_{X^{s,b_2}}.
\end{equation}
Also note that, in \eqref{eq:xs3}, one can replace strict inequalities with equalities in the $b$ indices by loosing $T^{-\epsilon}$, for a proof see \cite{GTV}.
Analogous inequalities hold for the norms $Y^{s,b}_\pm$.

\section{A priori estimates} \label{sec:apriori}

\subsection{Estimates for linear terms}

We start with the following estimates from \cite{et4} for the Schr\"odinger part of the equation:
\begin{lemma}\label{lem:kato}(Kato smoothing inequality) Fix $s\geq 0$. For any $g\in H^s(\R)$, we have
$\eta(t) e^{it\partial_{xx}}  g\in C^0_xH^{\frac{2s+1}4}_t(\R\times \R)$, and we have
$$
\big\|\eta e^{it\partial_{xx}}  g \big\|_{L^\infty_x H^{\frac{2s+1}{4}}_t }\les \|g\|_{H^s(\R)}.
$$
\end{lemma}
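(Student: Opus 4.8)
Kato smoothing for the Schrödinger group — this is a classical estimate, and my plan is to reduce it to a clean one-dimensional Fourier computation. Let me sketch the steps.

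The final statement to prove is the Kato smoothing inequality (Lemma \ref{lem:kato}): for fixed $s \geq 0$ and any $g \in H^s(\R)$, one has $\eta(t)\, e^{it\partial_{xx}} g \in C^0_x H^{\frac{2s+1}{4}}_t(\R\times\R)$ with
$$
\big\|\eta\, e^{it\partial_{xx}} g\big\|_{L^\infty_x H^{\frac{2s+1}{4}}_t} \lesssim \|g\|_{H^s(\R)}.
$$
Let me think about how I would prove this.
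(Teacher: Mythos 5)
Your submission contains no proof: after announcing a plan to ``reduce it to a clean one-dimensional Fourier computation,'' you restate the lemma and stop. Nothing is actually reduced, computed, or estimated, so this cannot be evaluated as even a partially correct argument. (For reference, the paper itself does not prove this lemma either --- it imports it from \cite{et4} --- so the burden of a self-contained proof is entirely on you.)

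To turn your stated plan into a proof, the following steps are all missing and all essential. Write $e^{it\partial_{xx}}g(x)=\frac{1}{2\pi}\int_\R e^{ix\xi-it\xi^2}\widehat g(\xi)\,d\xi$ and view it, for each fixed $x$, as a function of $t$; the point is to compute its Fourier transform in $t$. Split into low frequencies $|\xi|\le 1$ and high frequencies $|\xi|\ge 1$. On the high-frequency piece, change variables $\rho=\xi^2$ separately on $\xi>0$ and $\xi<0$: the $t$-Fourier transform is then supported on $\tau=-\rho$ and has density $e^{\pm ix\sqrt\rho}\,\widehat g(\pm\sqrt\rho)\,\rho^{-1/2}$, so
$$
\big\|e^{it\partial_{xx}}g_{\mathrm{high}}\big\|_{H^{\frac{2s+1}{4}}_t}^2
\lesssim \int_{1}^{\infty}\langle\rho\rangle^{\frac{2s+1}{2}}\,\big|\widehat g(\pm\sqrt\rho)\big|^2\,\frac{d\rho}{\rho}
\lesssim \int_{|\xi|\ge 1}\langle\xi\rangle^{2s}|\widehat g(\xi)|^2\,d\xi,
$$
uniformly in $x$, which is exactly where the exponent $\frac{2s+1}{4}$ comes from (the Jacobian $\rho^{-1/2}$ trades a half derivative). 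This step --- the change of variables and the bookkeeping of the Jacobian --- is the heart of the lemma and appears nowhere in your proposal. On the low-frequency piece the change of variables degenerates ($\rho^{-1/2}$ is not square-integrable near $0$), and this is precisely where the cutoff $\eta(t)$ is needed: multiplication by $\eta$ becomes convolution with $\widehat\eta$ in $\tau$, which smooths out the singular measure and gives an $H^N_t$ bound for any $N$ by the rapid decay of $\widehat\eta$ and the crude bound $\|g_{\mathrm{low}}\|_{L^1_\xi}\lesssim\|g\|_{L^2}$. You never mention why $\eta$ is present or where it is used, which suggests the degeneracy at $\xi=0$ was not identified. Finally, the claim $\eta\, e^{it\partial_{xx}}g\in C^0_xH^{\frac{2s+1}{4}}_t$ requires a separate (short) argument: continuity of $x\mapsto e^{ix\xi}$ plus dominated convergence for a dense class, combined with the uniform-in-$x$ bound just proved.
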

Lemma~\ref{lem:wbcont} and Proposition~\ref{prop:wbh} below show that the Schr\"odinger part of the boundary operator belongs to the space \eqref{def:lwpspace}.  
\begin{lemma}\label{lem:wbcont}  Let  $s\geq 0 $. Then for  $h$ satisfying  $\chi_{(0,\infty)}h\in H^{\frac{2s+1}{4}}(\R )$,   we have
$W_0^t(0, h) \in C^0_tH^s_x(\R\times \R)$, and $\eta(t)W_0^t(0, h) \in C^0_xH^{\frac{2s+1}4}_t(\R\times \R)$.
\end{lemma}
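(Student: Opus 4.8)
The plan is to work from the explicit representation of the boundary operator produced by the Laplace/Fourier construction of \eqref{linearnls} in \cite{et4}. Writing $g=\chi_{(0,\infty)}h$ and $\tilde g=\widehat g$, the solution $W_0^t(0,h)$ decomposes into a \emph{propagating} part, coming from the negative spectral region (after the substitution $\rho=-\beta^2$ this contributes a genuine Schr\"odinger wave $e^{-i\beta^2 t}e^{-i\beta x}$), and an \emph{evanescent} part coming from the positive spectral region (after $\rho=\beta^2$ this contributes $e^{i\beta^2 t}e^{-\beta x}$ on $x\geq0$), each weighted by $\beta\,\tilde g(\mp\beta^2)\,d\beta$ up to constants. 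I would first record that the hypothesis $g\in H^{\frac{2s+1}4}(\R)$ means exactly $\int \la\rho\ra^{\frac{2s+1}2}|\tilde g(\rho)|^2\,d\rho<\infty$, and then split each piece into a low-frequency part $|\beta|\les 1$ and a high-frequency part $|\beta|\gtrsim 1$.

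The propagating high-frequency part is the one that can be identified with a standard Schr\"odinger evolution. Setting $\widehat G(\beta)=2\beta\,\tilde g(-\beta^2)\1_{\beta\gtrsim1}$, the change of variables $\rho=\beta^2$, $d\rho=2\beta\,d\beta$, gives
$$
\|G\|_{H^s}^2\approx\int_{\beta\gtrsim1}\la\beta\ra^{2s}\beta^2\,|\tilde g(-\beta^2)|^2\,d\beta\approx\int_{\rho\gtrsim1}\la\rho\ra^{s+\frac12}|\tilde g(-\rho)|^2\,d\rho\les\|g\|_{H^{\frac{2s+1}4}}^2,
$$
so this piece equals (a reflection of) $e^{it\partial_{xx}}G$ with $\|G\|_{H^s}\les\|g\|_{H^{\frac{2s+1}4}}$. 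Its membership in $C^0_tH^s_x$ then follows from strong continuity of the Schr\"odinger group together with \eqref{eq:xs1} and the embedding $X^{s,b}\subset C^0_tH^s_x$, while the $\eta$-truncated version lands in $C^0_xH^{\frac{2s+1}4}_t$ by exactly the Kato smoothing bound of Lemma~\ref{lem:kato}.

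For the evanescent part I would estimate directly. Viewed as a function of $t$ at fixed $x\geq0$, its temporal frequency is $\beta^2$, so Plancherel in $t$ after the substitution $\omega=\beta^2$ yields the $t$-transform $e^{-\sqrt\omega\,x}\tilde g(\omega)\1_{\omega\gtrsim1}$; since $e^{-2\sqrt\omega\,x}\leq1$ for $x\geq0$, the $H^{\frac{2s+1}4}_t$ norm is immediately dominated by $\|g\|_{H^{\frac{2s+1}4}}$, uniformly in $x$, which gives the $C^0_xH^{\frac{2s+1}4}_t$ statement cleanly. The $H^s_x$ bound is the Laplace-type integral $\int_0^\infty e^{-\beta x}\,e^{i\beta^2 t}2\beta\tilde g(\beta^2)\,d\beta$, which is controlled on $\R^+$ by trading the exponential decay against a power of $\beta$ via one of the calculus lemmas of the Appendix and the same $\rho=\beta^2$ substitution as above. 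The low-frequency pieces of both parts are smooth and bounded with all derivatives, handled crudely using that the two weights are comparable near $\rho=0$, and continuity in each variable (rather than mere boundedness) is obtained by approximating $g$ by Schwartz functions and a dominated-convergence argument.

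The main obstacle is the high-frequency evanescent term: unlike the propagating part it is not a group orbit, so one cannot simply invoke \eqref{eq:xs1} and Lemma~\ref{lem:kato}, and the interplay of the oscillation $e^{i\beta^2 t}$ with the decay $e^{-\beta x}$ must be handled by hand, including the extension of the operator to $x<0$ built into the construction of \cite{et4}. The delicate point is keeping the temporal and spatial orders in their rigid balance $\frac{2s+1}4$ versus $s$: the extra half-power of $\beta$ from the Jacobian must be absorbed correctly in the $H^s_x$ estimate, while the $t$-trace must land continuously in $H^{\frac{2s+1}4}_t$. Verifying this index bookkeeping uniformly in $x$ and in $t$, rather than any single inequality, is the crux of the argument.
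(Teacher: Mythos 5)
The paper itself does not prove this lemma: it is imported verbatim from \cite{et4}, so your proposal must be measured against the proof given there. Your skeleton is indeed the same as that proof's: split $W_0^t(0,h)$ into the propagating and evanescent pieces; identify the propagating piece, via $\rho=\beta^2$, as a (reflected) free evolution $e^{it\partial_{xx}}G$ with $\|G\|_{H^s}\les \|\chi_{(0,\infty)}h\|_{H^{\frac{2s+1}{4}}}$, so that $C^0_tH^s_x$ follows from the group and $C^0_xH^{\frac{2s+1}{4}}_t$ from Lemma~\ref{lem:kato}; and treat the evanescent piece by hand. Your index computation for $G$ is correct, and your Plancherel-in-$t$ argument for the time trace of the evanescent piece (temporal Fourier transform $e^{-\sqrt{\omega}\,x}\tilde g(\omega)$, dominated by $|\tilde g(\omega)|$ for $x\geq 0$) is exactly the right mechanism for the uniform-in-$x$ bound.

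However, there is a genuine gap at the step you yourself call the crux: the $H^s_x$ bound for the evanescent term. The tools you invoke cannot do this job. Lemmas~\ref{lem:sums} and~\ref{lem:sums2} estimate convolutions of Japanese brackets and say nothing about the Laplace-type integral $\int_0^\infty e^{-\beta x}e^{i\beta^2 t}\beta\tilde g(\beta^2)\,d\beta$. The natural reading of ``trading the exponential decay against a power of $\beta$'' is Minkowski's inequality with $\|e^{-\beta x}\|_{L^2_x(\R^+)}\sim\beta^{-1/2}$ followed by Cauchy--Schwarz in $\beta$; this fails at the stated regularity, because the resulting $\beta$-integral diverges logarithmically exactly when the data is in $H^{\frac{2s+1}{4}}$ (it proves the lemma only for data in $H^{\frac{2s+1}{4}+\epsilon}$). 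Moreover, the conclusion is $C^0_tH^s_x(\R\times\R)$, a norm over all of $\R$, while your estimate lives on $\R^+$; for $x<0$ the kernel $e^{-\beta x}$ must be replaced by the bounded extension $\rho(\beta x)$ built into the construction of \cite{et4}, and you defer this point entirely. The way to close both issues is to work on the Fourier side in $x$: the transform of the evanescent piece is $\int_0^\infty e^{i\beta^2 t}\,\widehat{\rho}(\xi/\beta)\,\tilde g(\beta^2)\,d\beta$, and the operator with kernel $\la\xi\ra^{s}|\widehat{\rho}(\xi/\beta)|\,\beta^{-1}\la\beta\ra^{-s}$ is bounded on $L^2$ by a Schur test with the weights $|\xi|^{-1/2}$, $\beta^{-1/2}$, using the rapid decay of $\widehat{\rho}$ (equivalently, a Hilbert-type inequality for the kernel $(\beta+\beta')^{-1}$ handles the restriction to $x>0$, but still leaves the extension to $x<0$ untreated). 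This kernel estimate, not the Appendix lemmas, is what keeps the bookkeeping sharp at the pair $\big(s,\frac{2s+1}{4}\big)$; without it your argument does not close at the stated regularity.
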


\begin{proposition} \label{prop:wbh} Let $b\leq \frac12$ and $s\geq 0 $. Then for  $h$ satisfying  $\chi_{(0,\infty)}h\in H^{\frac{2s+1}{4}}(\R )$,   we have
$$\|\eta(t) W_0^t(0,h) \|_{X^{s,b}} \les \|\chi_{(0,\infty)}h\|_{H_t^{\frac{2s+1}4}(\R )}.$$
\end{proposition}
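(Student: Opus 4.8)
The plan is to start from the explicit representation of the Schr\"odinger boundary operator $W_0^t(0,h)$ derived in \cite{et4} via the Laplace/Fourier transform. Writing $\rho=\chi_{(0,\infty)}h$, that representation decomposes $W_0^t(0,h)$ into a \emph{dispersive} (traveling-wave) piece, whose space-time Fourier support sits on the paraboloid $\tau=-\xi^2$, plus an \emph{evanescent} piece that decays exponentially in $x$ and is smooth. I would estimate the two pieces in $X^{s,b}$ separately, the dispersive one being the source of the exponent $\frac{2s+1}{4}$ and the evanescent one being a lower-order correction that together with the first recovers the full norm of $\rho$.

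For the dispersive piece the key is a direct computation of the $X^{s,b}$ norm. Before truncation in time its space-time Fourier transform is a measure supported on $\tau=-\xi^2$, of the form $a(\xi)\,\delta(\tau+\xi^2)$, where matching the trace at $x=0$ to $\rho$ forces the amplitude to be $a(\xi)=c\,|\xi|\,\widehat{\rho}(-\xi^2)$, the factor $|\xi|$ coming from the Jacobian of the Laplace inversion. Multiplying by $\eta(t)$ convolves in $\tau$ against the Schwartz function $\widehat{\eta}$, so the transform becomes $a(\xi)\,\widehat{\eta}(\tau+\xi^2)$, and hence
\[
\|\eta(t)\,(\text{dispersive part})\|_{X^{s,b}}^2 \approx \Big(\int_{\R}|\widehat{\eta}(\mu)|^2\la\mu\ra^{2b}\,d\mu\Big)\int_{\R}|a(\xi)|^2\la\xi\ra^{2s}\,d\xi,
\]
where the $\tau$-factor is a finite constant because $\widehat{\eta}$ is Schwartz. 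The remaining $\xi$-integral is transformed by the substitution $\mu=\xi^2$: the Jacobian $d\mu=2|\xi|\,d\xi$ combined with $a(\xi)=c|\xi|\widehat{\rho}(-\xi^2)$ yields $\int_0^\infty \mu^{1/2}\la\mu^{1/2}\ra^{2s}|\widehat{\rho}(-\mu)|^2\,d\mu$, and since $\mu^{1/2}\la\mu^{1/2}\ra^{2s}\les\la\mu\ra^{\frac{2s+1}{2}}$ this is controlled by $\|\rho\|_{H^{(2s+1)/4}}^2$. This is precisely where the half-power of the dispersion relation produces the gain $\frac{2s+1}{4}$, and I would check that the region $\xi\to0$ causes no trouble since there all weights are comparable to $1$ and the extra factor $\mu^{1/2}$ only helps integrability.

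For the evanescent piece I would estimate the $X^{s,b}$ norm directly. This term is exponentially localized and smooth in $x$, so its contribution to $\la\xi\ra^{s}$ is harmless; the delicate point is the weight $\la\tau+\xi^2\ra^{b}$, because this part lives \emph{off} the paraboloid: its time frequencies are positive, $\tau\approx\beta^2$, while its spatial frequencies are spread out by the Fourier transform of the decaying profile $e^{-\beta x}$. Using that $\widehat{\eta}$ is Schwartz to localize $\tau\approx\beta^2$, the weight $\la\tau+\xi^2\ra^{b}$ must be balanced against the spatial decay $\frac{|\beta|}{\sqrt{\beta^2+\xi^2}}$, and it is here that the hypothesis $b\le\frac12$ is used; after integrating in $\xi$ and changing variables $\mu=\beta^2$ one again lands on $\|\rho\|_{H^{(2s+1)/4}}^2$.

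I expect the evanescent (off-dispersive) piece, together with the behavior near zero frequency, to be the main obstacle: the dispersive computation is essentially forced by the change of variables $\mu=\xi^2$, whereas the evanescent term requires quantitatively trading the spatial decay of its profile against the growth of $\la\tau+\xi^2\ra^{b}$. This is exactly the mechanism that limits the estimate to $b\le\frac12$, consistent with the fact that multiplication by $\eta(t)$ cannot concentrate mass exactly on the characteristic $\tau=-\xi^2$.
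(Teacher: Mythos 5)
You should note first that the paper does not prove Proposition~\ref{prop:wbh} at all: it is imported verbatim from \cite{et4}, so the relevant comparison is with the proof given there. Your outline has the same skeleton as that proof: use the Laplace--Fourier representation to split $W_0^t(0,h)$ into a dispersive piece whose space-time transform lives on $\tau=-\xi^2$ and an evanescent piece, and estimate them separately. Your treatment of the dispersive piece is correct and is essentially the argument of \cite{et4}: it amounts to recognizing that piece as $e^{it\partial_{xx}}\psi$ with $\widehat\psi(\xi)=c\,\xi\chi_{(0,\infty)}(\xi)\widehat{\rho}(-\xi^2)$, so that \eqref{eq:xs1} (whose proof is exactly your factorization $\widehat{\eta\,(\cdot)}(\xi,\tau)=a(\xi)\widehat\eta(\tau+\xi^2)$) applies, and the substitution $\mu=\xi^2$ converts $\|\psi\|_{H^s}$ into $\|\chi_{(0,\infty)}h\|_{H^{(2s+1)/4}}$; this part in fact works for every $b$.

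The genuine gap is in the evanescent piece. The $X^{s,b}$ norm is taken over $\R\times\R$, so the profile $e^{-\beta x}$, which is meaningful only for $x\ge 0$, must first be extended to $x<0$, and the estimate depends crucially on how. The decay you assign to it, $|\beta|/\sqrt{\beta^2+\xi^2}$, is exactly $|\widehat F(\xi/\beta)|$ for the extension by zero, $F(y)=e^{-y}\chi_{(0,\infty)}(y)$, whose Fourier transform $(1+iy)^{-1}$ decays only to first order. With that decay the claimed bound is false: after using the rapid decay of $\widehat\eta$ to localize $\tau\approx\beta^2$, the $\xi$-integral one must control is
\[
\int_\R \la\xi\ra^{2s}\,\la\beta^2+\xi^2\ra^{2b}\,\frac{\beta^2}{\beta^2+\xi^2}\,d\xi,
\]
which diverges whenever $2s+4b\ge 1$, in particular for every $s\ge\tfrac12$ (already at $b=0$) and, when $s=0$, for every $b\ge\tfrac14$ --- which includes the range of $b$ close to $\tfrac12$ that the paper actually uses. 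This is no accident: the zero extension has a jump at $x=0$, hence does not even belong to $H^s_x$ for $s\ge\tfrac12$. The proof in \cite{et4} avoids this by extending the profile smoothly, replacing $e^{-\beta x}$ by $e^{-\beta x}\rho(\beta x)$ with $\rho$ smooth, equal to $1$ on $[0,\infty)$ and supported in $(-1,\infty)$; then $\widehat F$ decays faster than any polynomial, the displayed integral converges for all $s\ge 0$ and all $b$, and the profile effectively localizes to $|\xi|\lesssim\beta$. With this fix the restriction $b\le\tfrac12$ does not come from the convergence-versus-decay tradeoff you describe, but from power counting in $\beta$: on $|\xi|\lesssim\beta$ the weight contributes $\beta^{4b}$, the factor $\la\xi\ra^{2s}$ contributes $\beta^{2s}$, and the $\xi$-integration of $|\widehat F(\xi/\beta)|^2$ contributes $\beta$, for a total of $\beta^{2s+4b+1}=\mu^{s+2b+\frac12}$ (with $\mu=\beta^2$), which must be compared with the available $\mu\,\la\mu\ra^{\frac{2s+1}{2}}$ coming from $\|\chi_{(0,\infty)}h\|^2_{H^{(2s+1)/4}}$; matching exponents at large $\mu$ is precisely $b\le\tfrac12$. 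So your high-level picture is the right one, but the quantitative ingredient that makes the evanescent estimate true --- the smooth extension and the resulting rapid decay of the profile --- is missing, and with the decay you actually invoke the proposition fails.
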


Now, we present analogous results for the wave part of the equation.

\begin{lemma}\label{lem:katoW}  Fix $s\in \R$. For any $g\in H^s(\R)$, we have
$\eta(t) e^{\pm   t \partial_x } g\in C^0_xH^s_t(\R\times \R)$, and we have
$$
\big\|\eta   e^{\pm   t \partial_x} g \big\|_{L^\infty_x H^{s}_t }\les \|g\|_{H^s(\R)}.
$$
\end{lemma}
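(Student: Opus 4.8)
The plan is to use the explicit form of the wave propagators as translations and to reduce the claim to the mapping properties of multiplication by a fixed cutoff together with the strong continuity of translations on $H^s$. Writing the multiplier $e^{\pm t\partial_x}$ in Fourier as $e^{\pm it\xi}$ (with the paper's convention $\widehat g(\xi)=\int e^{-ix\xi}g(x)\,dx$), one has the transport formula $e^{\pm t\partial_x}g(x)=g(x\pm t)$. Thus the statement concerns the space-time function $(x,t)\mapsto \eta(t)g(x\pm t)$, and by the symmetry $t\mapsto -t$ it suffices to treat the $+$ case.

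For the norm bound, I would fix $x$ and consider $G_x(t):=g(x+t)$ as a function of $t\in\R$. Taking the Fourier transform in $t$ and changing variables gives $\widehat{G_x}(\tau)=e^{i\tau x}\widehat g(\tau)$, so that $G_x\in H^s_t(\R)$ with $\|G_x\|_{H^s_t}=\|g\|_{H^s}$ \emph{independently of $x$}. Since $\eta$ is a fixed compactly supported smooth (in particular Schwartz) function, multiplication by $\eta$ is bounded on $H^s_t(\R)$ for every $s\in\R$: for $s\ge 0$ this is the fractional Leibniz rule, and for $s<0$ it follows by duality from the case $-s>0$ since $\eta$ is real. Consequently
$$\|\eta(\cdot)g(x+\cdot)\|_{H^s_t}\les \|G_x\|_{H^s_t}=\|g\|_{H^s},$$
with an implicit constant depending only on $s$ and $\eta$, not on $x$. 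Taking the supremum over $x$ yields $\|\eta\, e^{\pm t\partial_x}g\|_{L^\infty_x H^s_t}\les\|g\|_{H^s}$.

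For the continuity statement $\eta(t)e^{\pm t\partial_x}g\in C^0_xH^s_t$, I would estimate, for $x_1,x_2\in\R$,
$$\|\eta(\cdot)\big(g(x_1+\cdot)-g(x_2+\cdot)\big)\|_{H^s_t}\les \|g(\cdot+(x_1-x_2))-g\|_{H^s},$$
using again the multiplier bound and the translation invariance of the $H^s$ norm. The right-hand side tends to $0$ as $x_1\to x_2$ by strong continuity of the translation group on $H^s$; on the Fourier side this is $\int \la\xi\ra^{2s}|e^{i\xi h}-1|^2|\widehat g(\xi)|^2\,d\xi\to 0$ as $h\to 0$, which follows from dominated convergence. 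This gives continuity of $x\mapsto \eta(\cdot)g(x\pm\cdot)$ into $H^s_t$.

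The only step that requires any care is the boundedness of multiplication by $\eta$ on $H^s_t$ for negative $s$, which is handled by the duality argument above; everything else is an exact identity or a standard continuity property, so I do not expect a genuine obstacle here.
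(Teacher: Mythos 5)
Your proof is correct and takes essentially the same route as the paper, whose entire argument is the one-line observation that translation is continuous on Sobolev spaces; you have simply filled in the details (the exact identity $\|g(x\pm\cdot)\|_{H^s_t}=\|g\|_{H^s}$, the boundedness of multiplication by $\eta$ on $H^s_t$ for all $s\in\R$, and strong continuity of translations) that the authors left implicit.
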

\begin{proof} This is immediate since  translation is a continuous operator on   Sobolev spaces.
\end{proof}
Proposition~\ref{prop:wbhW} and  Lemma~\ref{lem:wbcontW}  below show that the wave part of the boundary operator belongs to the space \eqref{def:lwpspace}.  
\begin{proposition} \label{prop:wbhW} Let $b\in\R$ and $s>-1/2 $. Then for  $h$ satisfying  $\chi_{(0,\infty)}f\in H^{s}(\R )$,   we have
$$\|\eta(t) V_0^t (0,f)  \|_{Y^{s,b}} \les \|\chi_{(0,\infty)}f\|_{H_t^{s}(\R )}.$$
\end{proposition}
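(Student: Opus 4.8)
The plan is to exploit the fact that $V_0^t(0,f)$ is a purely one-directional traveling wave, so that it sits entirely in a single component of the $Y^{s,b}$ decomposition. Writing $F=\chi_{(0,\infty)}f$, recall from \eqref{def:V0t} that $V_0^t(0,f)(x)=F(t-x)$, whose space-time Fourier transform is supported on the line $\tau+\xi=0$. Since the $Y_-^{s,b}$ weight $\langle\tau+\xi\rangle^b$ is trivial there (while the $Y_+^{s,b}$ weight $\langle\tau-\xi\rangle^b$ is not), I would take the trivial decomposition $n_+=0$, $n_-=\eta(t)V_0^t(0,f)$ in the infimum \eqref{def:ysb}. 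This reduces the claim to the single bound $\|\eta(t)V_0^t(0,f)\|_{Y_-^{s,b}}\lesssim\|F\|_{H^s(\R)}$.

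For this I would use the propagator form of the norm, $\|n\|_{Y_-^{s,b}}=\|e^{t\partial_x}n\|_{H^s_xH^b_t}$. The operator $e^{t\partial_x}$ acts as translation by $+t$ in the spatial variable, i.e. $[e^{t\partial_x}n](x,t)=n(x+t,t)$; applying it to $\eta(t)F(t-x)$ produces
\[
e^{t\partial_x}\big[\eta(t)F(t-\cdot)\big](x,t)=\eta(t)F\big(t-(x+t)\big)=\eta(t)F(-x),
\]
which decouples the $x$ and $t$ dependence completely. This collapse of the space-time coupling is the whole point of conjugating by the matching propagator, and it is the step where the traveling-wave structure does all the work.

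It then remains to bound the tensor product $\eta(t)F(-x)$. Since the mixed norm $\|\cdot\|_{H^s_xH^b_t}$ carries the product weight $\langle\xi\rangle^s\langle\tau\rangle^b$ on the Fourier side, it factors on tensor products, giving $\|\eta(t)F(-x)\|_{H^s_xH^b_t}=\|F(-\cdot)\|_{H^s(\R)}\,\|\eta\|_{H^b(\R)}$. Reflection $x\mapsto-x$ is an isometry of $H^s(\R)$, so $\|F(-\cdot)\|_{H^s}=\|F\|_{H^s}=\|\chi_{(0,\infty)}f\|_{H^s(\R)}$, while $\|\eta\|_{H^b(\R)}$ is a finite constant depending only on $b$ and the fixed cutoff $\eta$. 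Combining the three displays yields the proposition. The hypothesis $s>-\frac12$ enters only to guarantee that $\chi_{(0,\infty)}f$ is a well-defined element of $H^s(\R)$; no genuine difficulty arises beyond keeping the translation and reflection conventions straight, so there is no real obstacle here: the estimate is essentially an exact identity followed by a trivial factorization.
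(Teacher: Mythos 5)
Your proof is correct and is essentially identical to the paper's argument: the paper likewise takes the trivial decomposition (bounding $\|\eta(t)V_0^t(0,f)\|_{Y^{s,b}}$ by the $Y_-^{s,b}$ norm alone), uses $e^{t\partial_x}$ to rewrite $\eta(t)f_e(t-x)$ as $\eta(t)f_e(-x)$, and then factors the mixed norm $H^s_xH^b_t$ on this tensor product. Your write-up simply makes explicit the steps (Fourier support on $\tau+\xi=0$, the translation computation, the product factorization) that the paper compresses into one displayed chain of inequalities.
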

 \begin{proof} Let $f_e(t):=\chi_{(0,\infty)}f\in H^s(\R)$. Note that the  definition \eqref{def:V0t}  of $V_0^t f$ is
$$
V_0^t(0, f)(x)=f_e(t-x).
$$
Therefore
$$
\|\eta(t) V_0^t (0,f) \|_{Y^{s,b}}\leq \|\eta(t) f_e(t-x) \|_{Y^{s,b}_-} =\|\eta(t) f_e(-x) \|_{H^s_xH^b_t} \les \|f_e\|_{H^s(\R)}. \qedhere
$$
\end{proof}

\begin{lemma}\label{lem:wbcontW}  Let  $s\in \R$. Then for  $f$ satisfying  $\chi_{(0,\infty)}f\in H^{s}(\R )$,   we have
$V_0^t(0,f) \in C^0_tH^s_x(\R\times \R)$, and $\eta(t)V_0^t(0,f) \in C^0_xH^{s}_t(\R\times \R)$.
\end{lemma}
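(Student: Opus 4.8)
The plan is to reduce both assertions to two elementary properties of the scale $H^s(\R)$: that reflection and translation act isometrically while translation is \emph{strongly} continuous, together with the boundedness of multiplication by the fixed cutoff $\eta$ on $H^s(\R)$ for every real $s$. This is exactly the mechanism already used in Proposition~\ref{prop:wbhW} and in Lemma~\ref{lem:katoW}. First I would set $f_e:=\chi_{(0,\infty)}f\in H^s(\R)$ and recall from \eqref{def:V0t} that $V_0^t(0,f)(x)=f_e(t-x)$. Writing $Rg(x):=g(-x)$ for reflection and $\tau_h g(x):=g(x-h)$ for translation, a direct computation gives $V_0^t(0,f)(\cdot)=\tau_t(Rf_e)$, the translation acting in the $x$-variable; for fixed $x$, the same function of $t$ is $(\tau_x f_e)(t)$, translation in the $t$-variable.

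For the first claim I would work on the Fourier side. Since $\widehat{\tau_h g}(\xi)=e^{-ih\xi}\widehat g(\xi)$ and $\langle\xi\rangle=\langle-\xi\rangle$, both $\tau_h$ and $R$ preserve the $H^s(\R)$ norm, so $\|V_0^t(0,f)\|_{H^s_x}=\|f_e\|_{H^s}$ for every $t$. For continuity in $t$, setting $g:=Rf_e$ I would estimate
$$
\|\tau_t g-\tau_{t_0}g\|_{H^s}^2=\int_\R \langle\xi\rangle^{2s}\,\big|e^{-i(t-t_0)\xi}-1\big|^2\,|\widehat g(\xi)|^2\,d\xi,
$$
which tends to $0$ as $t\to t_0$ by dominated convergence: the integrand is bounded by the integrable function $4\langle\xi\rangle^{2s}|\widehat g(\xi)|^2$ and converges pointwise to $0$. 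Hence $t\mapsto V_0^t(0,f)$ is continuous from $\R$ into $H^s_x(\R)$, which is the assertion $V_0^t(0,f)\in C^0_tH^s_x$.

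For the second claim I would view the function in its $t$-variable. For fixed $x$ one has $\eta(t)V_0^t(0,f)=\eta\cdot(\tau_x f_e)$, where $\tau_x$ now translates in $t$. Multiplication by the fixed $\eta\in C_c^\infty(\R)$ is bounded on $H^s(\R)$ for every $s\in\R$, so $\|\eta\cdot\tau_x f_e\|_{H^s_t}\lesssim\|f_e\|_{H^s}$ uniformly in $x$, and for $x\to x_0$,
$$
\|\eta\cdot(\tau_x f_e-\tau_{x_0}f_e)\|_{H^s_t}\lesssim\|\tau_x f_e-\tau_{x_0}f_e\|_{H^s_t}\longrightarrow 0
$$
by the same strong continuity of translation established above. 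Thus $x\mapsto \eta(t)V_0^t(0,f)$ is continuous from $\R$ into $H^s_t(\R)$, giving $\eta(t)V_0^t(0,f)\in C^0_xH^s_t$.

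The only point requiring slight care, rather than a genuine obstacle, is the boundedness of multiplication by $\eta$ on $H^s(\R)$ for negative $s$; this is the standard fact that multiplication by a Schwartz (equivalently, smooth compactly supported) function is bounded on every $H^s(\R)$, which follows by duality from the nonnegative range or directly from a Fourier-multiplier estimate. Everything else is the isometric, strongly continuous action of the translation group, precisely as exploited in the proof of Proposition~\ref{prop:wbhW}.
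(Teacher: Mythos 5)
Your proof is correct and is essentially the paper's own argument: the paper disposes of this lemma in one line, noting that it is immediate from the definition $V_0^t(0,f)(x)=f_e(t-x)$ because translation acts continuously (and isometrically) on Sobolev spaces, which is exactly the mechanism you spell out. Your added details — strong continuity of translation via dominated convergence on the Fourier side, and boundedness of multiplication by $\eta\in C_c^\infty$ on $H^s(\R)$ for all real $s$ — are the standard facts implicitly invoked there, so there is nothing to correct.
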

\begin{proof} This is immediate from the definition of $V_0^t$ since  translation is a continuous operator on   Sobolev spaces.
\end{proof}

\subsection{Estimates for the nonlinear terms}
In this section we discuss estimates for the nonlinear terms in \eqref{eq:duhamel} in order to close the fixed point argument and to obtain the smoothing theorem. 
A variant of the following proposition was obtained in \cite{et4}, also see \cite{collianderkenig}. 

 \begin{proposition}\label{prop:duhamelkato}  For any $ b<\frac12$, we have
\begin{multline*}
\Big\|\eta  \int_0^te^{i(t- t^\prime)\partial_{xx}} F  dt^\prime  \Big\|_{C^0_xH^{\frac{2s+1}{4}}_t(\R\times \R)}\les \\ \left\{ \begin{array}{ll} \|F\|_{X^{s,-b}}& \text{ for } 0\leq s \leq \frac12,  \\
\|F\|_{X^{s,-b}}+ \big\| \int_{\R } \la \lambda+\xi^2\ra^{\frac{2s-3}{4}}   | \widehat F(\xi,\lambda)| d\xi  \big\|_{L^2_\lambda} & \text{ for }  \frac12 < s. \end{array}
\right.
\end{multline*}
\end{proposition}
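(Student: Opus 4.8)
The plan is to pass to the space--time Fourier transform and reduce everything to the free Schr\"odinger evolution via the Kato smoothing estimate of Lemma~\ref{lem:kato}, thereby isolating the single term that forces the dichotomy at $s=\frac12$. For fixed $x$ I would first carry out the $t^\prime$--integration to obtain the representation
$$
\int_0^t e^{i(t-t^\prime)\partial_{xx}}F\,dt^\prime = c\int e^{ix\xi}\int \frac{e^{it\tau}-e^{-it\xi^2}}{i(\tau+\xi^2)}\,\widehat F(\xi,\tau)\,d\tau\,d\xi,
$$
where I write $\tau$ for the time frequency (the variable called $\lambda$ in the statement). The kernel is nonsingular because the numerator vanishes on $\tau+\xi^2=0$. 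I would then insert a smooth cutoff in $\tau$ splitting the integral into the low--modulation region $|\tau+\xi^2|\le 1$ and the high--modulation region $|\tau+\xi^2|>1$.

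On the low--modulation piece I would exploit the cancellation by writing $\frac{e^{it\tau}-e^{-it\xi^2}}{i(\tau+\xi^2)}=e^{-it\xi^2}\,\frac{e^{it(\tau+\xi^2)}-1}{i(\tau+\xi^2)}$ and expanding $\frac{e^{it\rho}-1}{i\rho}=\sum_{k\ge1}\frac{(i\rho)^{k-1}t^k}{k!}$ with $\rho=\tau+\xi^2$, $|\rho|\le1$. Each term is then $\eta(t)\,t^k\,e^{it\partial_{xx}}H_k$ for the function $H_k$ with $\widehat{H_k}(\xi)=\int_{|\tau+\xi^2|\le1}(\tau+\xi^2)^{k-1}\widehat F(\xi,\tau)\,d\tau$. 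Applying Lemma~\ref{lem:kato} (whose proof is unchanged for the Schwartz weight $\eta(t)t^k$) together with Cauchy--Schwarz in $\tau$, and using $\la\tau+\xi^2\ra\sim1$ on this region, bounds each term by $\frac{C^k}{k!}\|F\|_{X^{s,-b}}$, which is summable in $k$. This contributes the $\|F\|_{X^{s,-b}}$ term and is valid for every $s\ge0$.

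On the high--modulation piece I would separate the two exponentials. The $e^{-it\xi^2}$ term is exactly $\eta(t)\,e^{it\partial_{xx}}H$ with $\widehat H(\xi)=\int_{|\tau+\xi^2|>1}\frac{\widehat F(\xi,\tau)}{i(\tau+\xi^2)}\,d\tau$; Lemma~\ref{lem:kato} reduces it to $\|H\|_{H^s}$, and Cauchy--Schwarz in $\tau$ against $\la\tau+\xi^2\ra^{-2b}$ closes it since $\int_{|\rho|>1}\la\rho\ra^{2b}\rho^{-2}\,d\rho<\infty$ precisely when $b<\frac12$; this again gives $\|F\|_{X^{s,-b}}$ for all $s\ge0$. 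The remaining $e^{it\tau}$ term is the crux. After discarding the cutoff $\eta(t)$ by the $H^{\frac{2s+1}{4}}_t$--multiplier property (legitimate since $\frac{2s+1}{4}\ge0$) and taking the time--Fourier transform, which sets the dual time variable equal to $\tau$, the supremum over $x$ lets me drop $e^{ix\xi}$ and reduces the claim to the weighted bound
$$
\int\la\tau\ra^{\frac{2s+1}{2}}\Big(\int_{|\tau+\xi^2|>1}\frac{|\widehat F(\xi,\tau)|}{|\tau+\xi^2|}\,d\xi\Big)^2 d\tau\ \les\ \text{(square of the right-hand side of the proposition)}.
$$

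This last inequality is the main obstacle and is exactly where the two cases separate. For $0\le s\le\frac12$ I would apply Cauchy--Schwarz in $\xi$ against the weight $\la\xi\ra^{2s}\la\tau+\xi^2\ra^{-2b}$, reducing matters to the uniform calculus estimate $\sup_\tau\la\tau\ra^{\frac{2s+1}{2}}\int_{|\tau+\xi^2|>1}\frac{\la\tau+\xi^2\ra^{2b}}{(\tau+\xi^2)^2\la\xi\ra^{2s}}\,d\xi<\infty$, which (by a lemma of the type in the Appendix) holds for all $b<\frac12$ precisely because $s\le\frac12$, and yields the $\|F\|_{X^{s,-b}}$ bound. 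For $s>\frac12$ this integral diverges over $|\xi|\lesssim\sqrt{|\tau|}$, so I would split once more according to whether $\la\tau+\xi^2\ra\gtrsim\la\tau\ra$. On that subregion one has $\frac{\la\tau\ra^{(2s+1)/4}}{|\tau+\xi^2|}\les\la\tau+\xi^2\ra^{\frac{2s-3}{4}}$, so it is dominated exactly by $\big\|\int_\R\la\tau+\xi^2\ra^{\frac{2s-3}{4}}|\widehat F(\xi,\tau)|\,d\xi\big\|_{L^2_\tau}$, the extra term in the statement; on the complementary near-resonance region ($\tau<0$, $\xi^2\approx|\tau|$) the weight $\la\xi\ra^{2s}\sim\la\tau\ra^{s}$ is large and the calculus estimate again closes with $\|F\|_{X^{s,-b}}$. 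Collecting the four contributions gives the claimed bound.
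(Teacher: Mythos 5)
Your proof is correct, and its skeleton is the same as the paper's: the identical kernel representation $\int \frac{e^{it\lambda}-e^{-it\xi^2}}{i(\lambda+\xi^2)}\widehat F\,d\xi\,d\lambda$, the identical three-way splitting into a low-modulation piece and two high-modulation pieces, the Taylor-expansion cancellation near $\tau+\xi^2=0$, and the same Cauchy--Schwarz computation in the $e^{it\tau}$ term, which is where the dichotomy at $s=\frac12$ and the extra $L^2_\lambda$ term arise in both arguments. The genuine difference is one of economy: wherever the time dependence is $e^{-it\xi^2}$ --- each term of the low-modulation series and the whole third term --- you recognize the expression as a weighted free evolution $\eta(t)t^k e^{it\partial_{xx}}H_k$ (resp.\ $\eta(t)e^{it\partial_{xx}}H$) and quote Lemma~\ref{lem:kato}, so that only $\|H_k\|_{H^s}\lesssim\|F\|_{X^{s,-b}}$ remains to be checked by Cauchy--Schwarz in $\tau$; the paper instead handles these terms by hand, factoring $e^{it\lambda}$ rather than $e^{-it\xi^2}$ in the series and performing the substitution $\rho=\xi^2$ with a case split $|\xi|\gtrless 1$ for the third term. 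Your recycling of the Kato smoothing lemma buys a shorter argument and makes transparent that these pieces cost only $\|F\|_{X^{s,-b}}$ for every $s\ge 0$; what it requires is the (easy, but worth stating) observation that the smoothing lemma persists under the weight $\eta(t)t^k$ with a constant growing at most geometrically in $k$, so the series still sums against $1/k!$ --- the same care the paper takes with the factor $\|\eta(t)t^k\|_{H^1}/k!$. In the crux term, your splitting of the integration region according to $\langle\tau+\xi^2\rangle\gtrsim\langle\tau\rangle$ versus the near-resonance set $\xi^2\sim|\tau|$ is equivalent to the paper's pointwise inequality $\langle\lambda\rangle\lesssim\langle\lambda+\xi^2\rangle+\xi^2$: both produce exactly the extra term $\big\|\int\langle\lambda+\xi^2\rangle^{\frac{2s-3}{4}}|\widehat F|\,d\xi\big\|_{L^2_\lambda}$ plus an $X^{s,-b}$ contribution, and your supremum bound on the near-resonance set (where $\langle\xi\rangle^{2s}\sim\langle\tau\rangle^{s}$) is the paper's bound $\sup_\lambda\int|\xi|\langle\lambda+\xi^2\rangle^{2b-2}\,d\xi<\infty$ in disguise.
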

\begin{proof}  
It suffices to prove the bound above for $\eta D_0\big(\int_0^t e^{i(t- t^\prime)\partial_{xx}} F  dt^\prime \big)$ since $X^{s,b}$ norm is independent of space translation. The continuity in $x$ follows from this.   Note that, ignoring the dimensional constants, 
$$
D_0\Big(\int_0^te^{i(t- t^\prime)\partial_{xx}} F  dt^\prime \Big)= \int_\R\int_0^t e^{-i(t-t^\prime)\xi^2}F(\widehat\xi,t^\prime) dt^\prime d\xi.
$$
Using
$$
F(\widehat\xi,t^\prime)=\int_\R e^{it^\prime\lambda}\widehat F(\xi,\lambda) d\lambda,
$$
and
$$
\int_0^t e^{it^\prime(\xi^2+\lambda)}dt^\prime = \frac{e^{it(\xi^2+\lambda)}-1}{i(\lambda+\xi^2)}
$$
we obtain
$$
D_0\Big(\int_0^t e^{i(t- t^\prime)\partial_{xx}} F  dt^\prime \Big) = \int_{\R^2} \frac{e^{it \lambda }-e^{-it\xi^2}}{i(\lambda+\xi^2)} \widehat F(\xi,\lambda) d\xi d\lambda.
$$
Let $\psi$ be a smooth cutoff for $[-1,1]$, and let $\psi^c=1-\psi$. We write
\begin{multline*}
 \eta(t) D_0\Big(\int_0^t e^{i(t- t^\prime)\partial_{xx}} F  dt^\prime \Big)=   \eta(t)  \int_{\R^2} \frac{e^{it \lambda }-e^{-it\xi^2}}{i(\lambda+\xi^2)} \psi(\lambda+\xi^2) \widehat F(\xi,\lambda) d\xi d\lambda \\ +\eta(t) \int_{\R^2} \frac{e^{it \lambda } }{i(\lambda+\xi^2)} \psi^c(\lambda+\xi^2) \widehat F(\xi,\lambda) d\xi d\lambda
-\eta(t) \int_{\R^2} \frac{ e^{-it\xi^2}}{i(\lambda+\xi^2)} \psi^c(\lambda+\xi^2) \widehat F(\xi,\lambda) d\xi d\lambda \\ =:I+II+III.
\end{multline*}
By Taylor expansion, we have
$$
  \frac{e^{it \lambda }-e^{-it\xi^2}}{ \lambda+\xi^2 } =-e^{it\lambda} \sum_{k=1}^\infty \frac{(-it)^k}{k!} (\lambda+\xi^2)^{k-1}
$$
Therefore, we have
\begin{multline*}
\|I\|_{H^{\frac{2s+1}{4}}(\R)}\les  \sum_{k=1}^\infty  \frac{\| \eta(t)t^k\|_{H^1}}{k!}  \Big\|  \int_{\R^2}  e^{it\lambda} (\lambda+\xi^2)^{k-1}  \psi(\lambda+\xi^2) \widehat F(\xi,\lambda) d\xi d\lambda\Big\|_{H_t^{\frac{2s+1}{4}}(\R)}\\
\les  \sum_{k=1}^\infty  \frac{1}{(k-1)!}  \Big\| \la \lambda\ra^{\frac{2s+1}{4}} \int_{\R }    (\lambda+\xi^2)^{k-1}  \psi(\lambda+\xi^2) \widehat F(\xi,\lambda) d\xi  \Big\|_{L^2_\lambda}\\
\les  \Big\| \la \lambda\ra^{\frac{2s+1}{4}} \int_{\R }       \psi(\lambda+\xi^2) |\widehat F(\xi,\lambda)| d\xi  \Big\|_{L^2_\lambda}.
\end{multline*}
By Cauchy-Schwarz inequality in $\xi$, we estimate this by
\begin{multline*}
\Big[  \int_{\R }  \la \lambda\ra^{\frac{2s+1}{2}} \Big(  \int_{|\lambda+\xi^2|<1} \la \xi\ra^{-2s} d\xi\Big)\Big(  \int_{|\lambda+\xi^2|<1}  \la \xi\ra^{2s} |\widehat F(\xi,\lambda)|^2 d\xi\Big) d\lambda \Big]^{1/2}\\
\les \|F\|_{X^{s,-b}} \sup_\lambda   \Big( \la \lambda\ra^{\frac{2s+1}{2}} \int_{|\lambda+\xi^2|<1} \la \xi\ra^{-2s} d\xi\Big)^{1/2}\les \|F\|_{X^{s,-b}} .
\end{multline*}
 The last inequality follows by a calculation substituting $\rho =\xi^2$.

To estimate $\|III\|_{H^{\frac{2s+1}{4}}(\R)}$, we divide the $\xi$ integral into two pieces, $|\xi|\geq 1$, $|\xi|<1$. We estimate the contribution of the former piece as above (after the change of variable $\rho=\xi^2$):
$$
   \Big\| \la \rho\ra^{\frac{2s+1}{4}}\int_{\R } \frac{1}{ \lambda+\rho } \psi^c(\lambda+\rho) \widehat F(\sqrt{\rho},\lambda) \frac{d\lambda}{\sqrt{\rho}}  \Big\|_{L^2_{|\rho|\geq 1}}
\les  \Big\| \la \rho\ra^{\frac{2s-1}{4}}\int_{\R } \frac{1}{ \la \lambda+\rho \ra } | \widehat F(\sqrt{\rho},\lambda)| \ d\lambda   \Big\|_{L^2_{|\rho|\geq 1}}.
$$
By Cauchy-Schwarz in $\lambda$ integral, and using $b<\frac12$, we bound this by
$$
   \Big[\int_{|\rho|>1}\int_{\R } \frac{ \la \rho\ra^{\frac{2s-1}{2}}}{ \la \lambda+\rho \ra^{2b} } | \widehat F(\sqrt{\rho},\lambda)|^2   d\lambda  d\rho  \Big]^{1/2}\les \|F\|_{X^{s,-b}}.
$$
We estimate the contribution of the latter term by
$$
 \int_{\R^2} \frac{ \|\eta(t) e^{-it\xi^2}\|_{H^{\frac{2s+1}{4}}}\chi_{[-1,1]}(\xi)}{|\lambda+\xi^2|} \psi^c(\lambda+\xi^2)  |\widehat F(\xi,\lambda)| d\xi d\lambda \les \int_{\R^2} \frac{   \chi_{[-1,1]}(\xi)}{\la\lambda+\xi^2\ra}    |\widehat F(\xi,\lambda)| d\xi d\lambda.
$$
For $b<\frac12$, this is bounded by $\|F\|_{X^{0,-b}}$ by Cauchy-Schwarz inequality in $\xi$ and $\lambda$ integrals.

For the second term, we have
\begin{multline*}
\|II\|_{H^{\frac{2s+1}{4}}(\R)}\les \|\eta\|_{H^1} \Big\| \la \lambda\ra^{\frac{2s+1}{4}}\int_{\R } \frac{1}{ \lambda+\xi^2 } \psi^c(\lambda+\xi^2) \widehat F(\xi,\lambda) d\xi  \Big\|_{L^2_\lambda}\\
\les  \Big\| \la \lambda\ra^{\frac{2s+1}{4}}\int_{\R } \frac{1}{ \la \lambda+\xi^2 \ra } | \widehat F(\xi,\lambda)| d\xi  \Big\|_{L^2_\lambda}
\\ \les  \Big\| \int_{\R } \la \lambda+\xi^2\ra^{\frac{2s-3}{4}} | \widehat F(\xi,\lambda)| d\xi  \Big\|_{L^2_\lambda} +    \Big\|  \int_{\R } \frac{|\xi|^{s+\frac12}}{ \la \lambda+\xi^2 \ra } | \widehat F(\xi,\lambda)| d\xi  \Big\|_{L^2_\lambda},
\end{multline*}
where we used the inequality $\la \lambda\ra \les \la \lambda+\xi^2\ra + \xi^2$. We bound the second summand by Cauchy-Schwarz as follows
\begin{multline*}
\Big[  \int_{\R }    \Big(  \int \frac{|\xi|}{\la \lambda+\xi^2 \ra^{2-2b}  } d\xi\Big)\Big(  \int \frac{ | \xi|^{2s}}{\la \lambda+\xi^2 \ra^{2b} } |\widehat F(\xi,\lambda)|^2 d\xi\Big) d\lambda \Big]^{1/2}\\
\les \|F\|_{X^{s,-b}} \sup_\lambda   \Big(    \int \frac{|\xi|}{\la \lambda+\xi^2 \ra^{2-2b} } d\xi\Big)^{1/2}\les \|F\|_{X^{s,-b}} .
\end{multline*}
This finishes the proof for $s>\frac12$. 
For $s\leq \frac12$,  by Cauchy-Schwarz inequality in $\xi$, we estimate 
$$
\Big\| \la \lambda\ra^{\frac{2s+1}{4}}\int_{\R } \frac{1}{ \la \lambda+\xi^2 \ra } | \widehat F(\xi,\lambda)| d\xi  \Big\|_{L^2_\lambda}$$
by
\begin{multline*}
\Big[  \int_{\R }  \la \lambda\ra^{\frac{2s+1}{2}} \Big(  \int \frac{1}{\la \lambda+\xi^2 \ra^{2-2b} \la \xi\ra^{2s}} d\xi\Big)\Big(  \int \frac{ \la \xi\ra^{2s}}{\la \lambda+\xi^2 \ra^{2b} } |\widehat F(\xi,\lambda)|^2 d\xi\Big) d\lambda \Big]^{1/2}\\
\les \|F\|_{X^{s,-b}} \sup_\lambda   \Big( \la \lambda\ra^{\frac{2s+1}{2}}  \int \frac{1}{\la \lambda+\xi^2 \ra^{2-2b} \la \xi\ra^{2s}} d\xi\Big)^{1/2}\les \|F\|_{X^{s,-b}} .
\end{multline*}
To obtain the last inequality recall that $s\leq\frac12, b<\frac12$, and  consider the cases $|\xi|<1$ and $|\xi|\geq 1$ separately. In the former case use $\la \lambda+\xi^2 \ra\sim\la\lambda\ra$, and in the latter case use Lemma~\ref{lem:sums} after the change of variable $\rho=\xi^2$. 
\end{proof}

We have a similar proposition for the wave part: 
\begin{proposition}\label{prop:duhamelkatoW}  For any $b<\frac12$, we have
\begin{multline*}
\Big\|\eta  \int_0^te^{\pm  (t- t^\prime)\partial_x} G dt^\prime  \Big\|_{C^0_xH^{s}_t(\R\times \R)} \les \\ \left\{ \begin{array}{ll}  \|G\|_{Y_\pm^{s,-b}} + \big\| \la \lambda\ra^s\int_{\R } \frac{1}{ \la \lambda\mp  \xi  \ra } | \widehat G(\xi,\lambda)| d\xi  \big\|_{L^2_\lambda}& \text{ for }  -\frac12 < s <0, \\
\|G\|_{Y_\pm^{s,-b}}& \text{ for } 0\leq s \leq \frac12, \\
 \|G\|_{Y_\pm^{s,-b}} +  \big\| \int_{\R }  \la \lambda\mp  \xi  \ra^{s-1}   | \widehat G(\xi,\lambda)| d\xi   \big\|_{L^2_\lambda}& \text{ for }  \frac12 < s. \end{array}
\right.
\end{multline*}
\end{proposition}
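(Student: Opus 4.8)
The plan is to mimic the proof of Proposition~\ref{prop:duhamelkato}, replacing the Schr\"odinger characteristic $\xi^2$ by the wave characteristic $\pm\xi$; because the latter is linear, no change of variables $\rho=\xi^2$ (and no attendant $|\xi|\geq 1$ vs.\ $|\xi|<1$ split) is needed, but I pay for this by having to treat the extra range $-\tfrac12<s<0$. First I would reduce to $x=0$. Writing $\Phi(x,t)=\eta(t)\int_0^t e^{\pm(t-t^\prime)\partial_x}G\,dt^\prime$ and using that $e^{\pm t\partial_x}$ is translation, one has $\Phi(x_0,t)=\eta(t)D_0\big[\int_0^t e^{\pm(t-t^\prime)\partial_x}G_{x_0}\,dt^\prime\big]$, where $G_{x_0}(x,t)=G(x+x_0,t)$ satisfies $|\widehat{G_{x_0}}|=|\widehat G|$. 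Since every norm on the right-hand side depends on $G$ only through $|\widehat G|$ and the weight $\la\xi\ra$, all are invariant under this spatial translation, so a bound at $x=0$ yields the $C^0_x$ bound after taking the supremum over $x_0$, and continuity in $x$ follows from the same representation. Next, inserting the space-time Fourier expansion of $G$ and evaluating $\int_0^t e^{it^\prime(\lambda\mp\xi)}\,dt^\prime$ gives, exactly as in the Schr\"odinger case,
$$
D_0\Big(\int_0^t e^{\pm(t-t^\prime)\partial_x}G\,dt^\prime\Big)=\int_{\R^2}\frac{e^{it\lambda}-e^{\pm it\xi}}{i(\lambda\mp\xi)}\,\widehat G(\xi,\lambda)\,d\xi\,d\lambda.
$$
Cutting off with a smooth $\psi$ supported in $[-1,1]$ in the modulation $\lambda\mp\xi$, and $\psi^c=1-\psi$, splits this into $I+II+III$, where $I$ carries $\psi(\lambda\mp\xi)$ and the full kernel, $II$ carries $e^{it\lambda}\psi^c/(\lambda\mp\xi)$, and $III$ carries $e^{\pm it\xi}\psi^c/(\lambda\mp\xi)$.

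For $I$, Taylor expanding $\frac{e^{it\lambda}-e^{\pm it\xi}}{\lambda\mp\xi}=-e^{it\lambda}\sum_{k\geq1}\frac{(-it)^k}{k!}(\lambda\mp\xi)^{k-1}$ and summing the resulting convergent series (using $|(\lambda\mp\xi)^{k-1}\psi|\leq1$) reduces the $H^s_t$ norm to $\big\|\la\lambda\ra^s\int\psi(\lambda\mp\xi)|\widehat G|\,d\xi\big\|_{L^2_\lambda}$; Cauchy--Schwarz in $\xi$ over the $O(1)$-measure set $\{|\lambda\mp\xi|\lesssim1\}$, on which $\la\xi\ra\sim\la\lambda\ra$, bounds this by $\|G\|_{Y_\pm^{s,-b}}$ for every $s$. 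For $III$, the temporal frequency of the kernel $e^{\pm it\xi}$ is $\pm\xi$, so Plancherel together with boundedness of multiplication by $\eta$ on $H^s_t$ gives $\|III\|_{H^s_t}\les\big\|\la\xi\ra^s\int\la\lambda\mp\xi\ra^{-1}\psi^c|\widehat G|\,d\lambda\big\|_{L^2_\xi}$; Cauchy--Schwarz in $\lambda$, with $\int\la\lambda\mp\xi\ra^{-(2-2b)}d\lambda$ converging precisely because $b<\tfrac12$, bounds this by $\|G\|_{Y_\pm^{s,-b}}$, again for all $s$. Thus neither $I$ nor $III$ produces an extra term.

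The only case distinctions come from $II$, whose kernel $e^{it\lambda}$ has temporal frequency $\lambda$, so $\|II\|_{H^s_t}\les\big\|\la\lambda\ra^s\int\la\lambda\mp\xi\ra^{-1}\psi^c|\widehat G|\,d\xi\big\|_{L^2_\lambda}$. For $-\tfrac12<s<0$ this is already the stated extra term (as $\la\lambda\mp\xi\ra^{-1}\psi^c\leq\la\lambda\mp\xi\ra^{-1}$), so I would keep it. For $0\leq s\leq\tfrac12$ I would use $\la\lambda\ra^s\les\la\lambda\mp\xi\ra^s+\la\xi\ra^s$ to split $II$ into a piece weighted by $\la\xi\ra^s$ and a piece weighted by $\la\lambda\mp\xi\ra^{s-1}$; Cauchy--Schwarz in $\xi$ and the calculus Lemma~\ref{lem:sums} control both by $\|G\|_{Y_\pm^{s,-b}}$, the relevant convolution $\int\la\lambda\mp\xi\ra^{-\beta}\la\xi\ra^{-2s}\,d\xi$ being uniformly bounded because $\beta=2-2s-2b>0$ and $\beta+2s=2-2b>1$. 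Routing the borderline $s=\tfrac12$ through this decomposition is deliberate: estimating $\la\lambda\ra^s/\la\lambda\mp\xi\ra$ directly would lose a logarithm there. For $s>\tfrac12$ the same splitting controls the $\la\xi\ra^s$ piece by $\|G\|_{Y_\pm^{s,-b}}$, but the $\la\lambda\mp\xi\ra^{s-1}$ piece can no longer be absorbed (for $s\geq 1-b$ the exponent $\beta=2-2s-2b$ turns nonpositive), so it is retained as the stated extra term $\big\|\int\la\lambda\mp\xi\ra^{s-1}|\widehat G|\,d\xi\big\|_{L^2_\lambda}$.

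The crux is therefore entirely the bookkeeping in $II$: pinning down the thresholds $s=0$ and $s=\tfrac12$ at which the $\la\lambda\mp\xi\ra^s+\la\xi\ra^s$ decomposition ceases to close against $Y_\pm^{s,-b}$, and correctly recording the surviving term in each regime. I expect the only genuinely delicate points to be the uniform-in-$\lambda$ convergence of the convolution integrals (all of which rest on $b<\tfrac12$ via Lemma~\ref{lem:sums}) and the safe handling of the endpoint $s=\tfrac12$; everything else is a direct transcription of the Schr\"odinger argument with $\xi^2$ replaced by $\mp\xi$.
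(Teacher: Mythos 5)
Your proposal is correct and follows essentially the same route as the paper: the same reduction to $x=0$, the same small/large modulation splitting into $I+II+III$, the Taylor expansion plus Cauchy--Schwarz for $I$, Plancherel plus Cauchy--Schwarz (using $b<\tfrac12$) for $III$, and the case analysis concentrated entirely in $II$. The only cosmetic difference is that for $0\le s\le\tfrac12$ you split $\la\lambda\ra^s\les\la\lambda\mp\xi\ra^s+\la\xi\ra^s$ before applying Cauchy--Schwarz, whereas the paper (by transcription of Proposition~\ref{prop:duhamelkato}) runs a single weighted Cauchy--Schwarz and bounds $\sup_\lambda\la\lambda\ra^{2s}\int\la\lambda\mp\xi\ra^{-(2-2b)}\la\xi\ra^{-2s}\,d\xi$ via Lemma~\ref{lem:sums}; both rest on exactly the same calculus lemma and the condition $b<\tfrac12$.
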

\begin{proof}  We consider only the '$+$' case.
As in Proposition~\ref{prop:duhamelkato}, it suffices to prove the bound above for $\eta D_0\big(\int_0^t e^{ (t- t^\prime)\partial_x} G  dt^\prime \big)$.
As above, we write
\begin{multline*}
 \eta(t) D_0\Big(\int_0^t e^{ (t- t^\prime)\partial_x} G  dt^\prime  \Big)=   \eta(t)  \int_{\R^2} \frac{e^{it \lambda }-e^{ it  \xi }}{i(\lambda-  \xi )} \psi(\lambda-  \xi ) \widehat G(\xi,\lambda) d\xi d\lambda \\ +\eta(t) \int_{\R^2} \frac{e^{it \lambda } }{i(\lambda- \xi )} \psi^c(\lambda-  \xi ) \widehat G(\xi,\lambda) d\xi d\lambda
-\eta(t) \int_{\R^2} \frac{ e^{ it  \xi }}{i(\lambda-  \xi )} \psi^c(\lambda-  \xi ) \widehat G(\xi,\lambda) d\xi d\lambda \\ =:I+II+III.
\end{multline*}
By Taylor expansion, we have
$$
  \frac{e^{it \lambda }-e^{ it  \xi }}{ \lambda-  \xi } = - e^{it\lambda} \sum_{k=1}^\infty \frac{(-it)^k}{k!} (\lambda- \xi  )^{k-1}
$$
Therefore,  we have
\begin{multline*}
\|I\|_{H^{s}(\R)}\les  \sum_{k=1}^\infty  \frac{\| \eta(t)t^k\|_{H^{s+1}}}{k!}  \Big\|  \int_{\R^2}  e^{it\lambda} (\lambda- \xi  )^{k-1}  \psi(\lambda-  \xi  ) \widehat G(\xi,\lambda) d\xi d\lambda\Big\|_{H_t^{s}(\R)}\\
\les  \sum_{k=1}^\infty  \frac{k^{s+1}}{k!}  \Big\| \la \lambda\ra^{s} \int_{\R }    (\lambda-  \xi )^{k-1}  \psi(\lambda-  \xi  ) \widehat G(\xi,\lambda) d\xi  \Big\|_{L^2_\lambda}\\
\les  \Big\| \la \lambda\ra^{s} \int_{\R }       \psi(\lambda-  \xi  ) |\widehat G(\xi,\lambda)| d\xi  \Big\|_{L^2_\lambda}.
\end{multline*}
By Cauchy-Schwarz inequality in $\xi$, we estimate this by
\begin{multline*}
\Big[  \int_{\R }  \la \lambda\ra^{ 2s } \Big(  \int_{|\lambda-\xi |<1} \la \xi\ra^{-2s} d\xi\Big)\Big(  \int_{|\lambda- \xi |<1}  \la \xi\ra^{2s} |\widehat G(\xi,\lambda)|^2 d\xi\Big) d\lambda \Big]^{1/2}\\
\les \|G\|_{Y_+^{s,-b}} \sup_\lambda   \Big( \la \lambda\ra^{2s} \int_{|\lambda- \xi  |<1} \la \xi\ra^{-2s} d\xi\Big)^{1/2}\les \|G\|_{Y_+^{s,-b}} .
\end{multline*}

To estimate $\|III\|_{H^{s}(\R)}$, we use Plancherel to obtain   
\begin{multline*}
 \|III\|_{H^s(\R)}\les   \Big\| \la \xi\ra^{s}\int_{\R } \frac{1}{ \la \lambda-\xi\ra } | \widehat G(\xi,\lambda) | d\lambda  \Big\|_{L^2 } \\
\les  \Big\| \la \xi\ra^{s}\Big( \int \frac{1}{ \la \lambda-\xi \ra^{2b} } | \widehat G(\xi,\lambda)|^2 d\lambda   \Big)^{1/2}\Big\|_{L^2 }\les \|G\|_{Y_+^{s,-b}}.   
\end{multline*}
In the second line we used Cauchy-Schwarz in $\lambda$ integral, and $b<\frac12$.

For the second term, we have
$$
\|II\|_{H^{s}(\R)}\les \Big\| \la \lambda\ra^s\int_{\R } \frac{1}{ \lambda-  \xi } \psi^c(\lambda-  \xi  ) \widehat G(\xi,\lambda) d\xi  \Big\|_{L^2_\lambda}\\
\les  \Big\| \la \lambda\ra^s\int_{\R } \frac{1}{ \la \lambda- \xi   \ra } | \widehat G(\xi,\lambda)| d\xi  \Big\|_{L^2_\lambda}.
$$
This finishes the proof for $-\frac12<s<0$.  For the remaining cases the proof is similar to the proof of Proposition~\ref{prop:duhamelkato}.
\end{proof}

Proposition~\ref{prop:smooth} and Proposition~\ref{prop:smooth12} below establish the smoothing for the nonlinear Schr\"odinger part:
\begin{proposition}\label{prop:smooth} For any admissible $s_0, s_1$ satisfying $s_0-s_1<1$, and for any  
$$a_0<\min\left(\frac12, s_1+\frac12,s_1-s_0+1\right),$$ 
there exists $\epsilon>0$ such that for $\frac12-\epsilon<b <\frac12$, we have
$$\big\| n u\big\|_{X^{s_0+a_0,-b }}\les \|n\|_{Y^{s_1,b}}\|u\|_{X^{s_0,b }}.$$ 
\end{proposition}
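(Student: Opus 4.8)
The plan is to transfer the estimate to the space-time Fourier side, recast it as a weighted $L^2$ convolution bound, and extract a gain from the algebraic resonance between the Schr\"odinger and wave phases. First I would reduce to a single wave branch. Since $\|n\|_{Y^{s_1,b}}$ is the infimum of $\|n_+\|_{Y_+^{s_1,b}}+\|n_-\|_{Y_-^{s_1,b}}$ over all splittings $n=n_++n_-$, and $\|nu\|_{X^{s_0+a_0,-b}}\le\|n_+u\|_{X^{s_0+a_0,-b}}+\|n_-u\|_{X^{s_0+a_0,-b}}$, it suffices to prove $\|n_\pm u\|_{X^{s_0+a_0,-b}}\lesssim\|n_\pm\|_{Y_\pm^{s_1,b}}\,\|u\|_{X^{s_0,b}}$ and then take the infimum; I treat the $+$ case, the $-$ case being identical. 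Writing $\zeta=(\tau,\xi)$, $\zeta_1=(\tau_1,\xi_1)$, $\zeta_2=\zeta-\zeta_1=(\tau_2,\xi_2)$ and using Plancherel and duality, the estimate is equivalent to
\[
\Big|\int_{\R^2}\int_{\R^2} M\,f(\zeta_1)\,g(\zeta-\zeta_1)\,h(\zeta)\,d\zeta_1\,d\zeta\Big|\lesssim\|f\|_{L^2}\|g\|_{L^2}\|h\|_{L^2},
\]
where $f,g,h\ge0$ carry the $Y_+^{s_1,b}$, $X^{s_0,b}$ and dual $L^2$ masses respectively, and
\[
M=\frac{\langle\xi\rangle^{s_0+a_0}\langle\xi_1\rangle^{-s_1}\langle\xi_2\rangle^{-s_0}}{\langle\tau+\xi^2\rangle^{b}\,\langle\tau_1-\xi_1\rangle^{b}\,\langle\tau_2+\xi_2^2\rangle^{b}},\qquad \xi=\xi_1+\xi_2,\ \tau=\tau_1+\tau_2.
\]

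The heart of the matter is the resonance identity
\[
(\tau+\xi^2)-(\tau_1-\xi_1)-(\tau_2+\xi_2^2)=\xi_1(2\xi-\xi_1+1),
\]
valid on the convolution support, which forces the largest of the three modulations $\langle\tau+\xi^2\rangle,\langle\tau_1-\xi_1\rangle,\langle\tau_2+\xi_2^2\rangle$ to be $\gtrsim|\xi_1|\,|2\xi-\xi_1+1|$. All three modulations occur with negative exponents in $M$, so this lower bound is precisely the reservoir from which the derivative surplus $\langle\xi\rangle^{a_0}$ in the numerator is paid.

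Next I would apply Cauchy--Schwarz, peeling off $h$ (and, in the few regimes where the resulting $\xi_1$-integral misbehaves, peeling off $f$ or $g$ instead), which reduces everything to the uniform bound
\[
\sup_{\tau,\xi}\ \langle\xi\rangle^{2(s_0+a_0)}\langle\tau+\xi^2\rangle^{-2b}\int \frac{\langle\xi_1\rangle^{-2s_1}\langle\xi-\xi_1\rangle^{-2s_0}}{\langle\tau_1-\xi_1\rangle^{2b}\,\langle\tau-\tau_1+(\xi-\xi_1)^2\rangle^{2b}}\,d\tau_1\,d\xi_1\lesssim1.
\]
The $\tau_1$-integral is carried out by the elementary calculus estimate $\int\langle\tau_1-a\rangle^{-2b}\langle\tau_1-c\rangle^{-2b}\,d\tau_1\lesssim\langle a-c\rangle^{-(4b-1)}$ (legitimate because $4b>1$ for $b$ near $\tfrac12$), which produces the factor $\langle\mu-\xi_1(2\xi-\xi_1+1)\rangle^{-(4b-1)}$ with $\mu:=\tau+\xi^2$; combined with the external $\langle\mu\rangle^{-2b}$ this yields the full resonance gain $\langle\xi_1(2\xi-\xi_1+1)\rangle^{-(4b-1)}$ uniformly in $\mu$ (splitting according to whether $\mu$ is comparable to the resonance or not). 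The remaining $\xi_1$-integral is then estimated after the substitution $\rho=\xi_1^2$ via Lemma~\ref{lem:sums}, the Jacobian $d\xi_1\sim d\rho/\sqrt\rho$ being responsible for the additional $\tfrac12$ appearing in the thresholds.

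Finally I would run a case analysis over dyadic ranges of $\langle\xi\rangle,\langle\xi_1\rangle,\langle\xi_2\rangle$. In the regime $|\xi_2|\lesssim1$ one has $\langle\xi\rangle\sim\langle\xi_1\rangle$ and resonance $\sim\xi_1^2$, and balancing the numerator $\langle\xi_1\rangle^{s_0+a_0-s_1}$ against this gain dictates $a_0<s_1-s_0+1$; the high wave-frequency interactions, which concentrate near the stationary point $\xi_1\approx\xi+\tfrac12$ of $\xi_1\mapsto\xi_1(2\xi-\xi_1+1)$, dictate $a_0<s_1+\tfrac12$; and the generic non-resonant balance produces the ceiling $a_0<\tfrac12$. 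I expect the main obstacle to be exactly this near-stationary / low wave-frequency region, where $\xi_1(2\xi-\xi_1+1)$ degenerates so that the gain afforded by a single modulation (capped by $b<\tfrac12$) is weakest; there the $\rho=\xi_1^2$ substitution, the concentration of the $\xi_1$-integral where $\rho$ is comparable to $\mu$, and the loss $\langle\xi_1\rangle^{-s_1}$ incurred when $s_1<0$ all interact, and it is their precise bookkeeping that pins down the three sharp smoothing exponents and forces $b$ to be taken close to $\tfrac12$.
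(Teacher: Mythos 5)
Your skeleton is exactly the paper's: reduce to a single wave branch $Y^{s_1,b}_{\pm}$, pass to the Fourier side with the multiplier $M$, apply Cauchy--Schwarz to reduce matters to $\sup_{\xi,\tau}\int M^2\,d\xi_1\,d\tau_1<\infty$, integrate in $\tau_1$ via Lemma~\ref{lem:sums}, and convert the remaining two modulations into the resonance gain $\la \xi_1(2\xi-\xi_1+1)\ra^{-(4b-1)}$ (the paper does this with $\la\tau-a\ra\la\tau-b\ra\gtrsim\la a-b\ra$ after shifting $\xi\to\xi-\frac12$, which is your splitting according to whether $\mu$ is comparable to the resonance). Up to that point your reductions are correct and match the paper's proof essentially line by line; the extra flexibility you reserve (peeling off $f$ or $g$ instead of $h$) is never needed.

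The gap is in the case analysis of the $\xi_1$-integral, which is where the three thresholds are actually proved, and there your identification of the critical regions is wrong in two places. First, the stationary point $\xi_1\approx\xi+\frac12$ of the resonance function \emph{is} the region $|\xi_2|\les 1$; there the resonance is maximal, of size $\sim\xi^2$ (it does not degenerate --- what degenerates is the Jacobian of the substitution), and the computation you yourself do for $|\xi_2|\les1$ gives $a_0<s_1-s_0+1$ for this region, not $a_0<s_1+\frac12$. So you have assigned two different thresholds to one and the same region, one of them incorrect. The threshold $a_0<s_1+\frac12$ in fact comes from the \emph{second zero} of the resonance function, $\xi_1\approx 2\xi+1$ (high wave frequency paired against the opposite Schr\"odinger frequency $\xi_2\approx-\xi$): an $O(1)$ neighborhood in $\xi_1$ there is stretched to an interval of length $O(|\xi|)$ in the resonance variable, producing $\la\xi\ra^{2a_0-2s_1-1+}$; this is the paper's case $|\xi_1-2\xi|<1$. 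Likewise the ceiling $a_0<\frac12$ is not the ``generic non-resonant balance''; it comes from the other resonance zero, the low wave-frequency region $|\xi_1|\les 1$, by the same measure-compression computation with $\la\xi_1\ra^{-2s_1}\sim1$ (the paper's case $|\xi_1|<1$), while the genuinely generic region is handled by Lemma~\ref{lem:sums2} and yields $a_0<s_1+\frac12$ again. Since the content of the proposition is precisely these three numbers, the regions must be matched to the correct thresholds; the machinery you set up would self-correct if each region were computed honestly, but as written your roadmap directs the effort (and the expected ``main obstacle'') to the wrong places.
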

 \begin{proof}
It suffices to prove that   
$$\big\| n u\big\|_{X^{s_0+a_0,-b }}\les \|n\|_{Y^{s_1,b}_+}\|u\|_{X^{s_0,b }},$$
the proof involving the norm $Y^{s_1,b}_-$ is similar.

 By writing the Fourier transform of $ nu $ as a convolution, we obtain
\be\label{nuhat} \widehat{nu}(\xi,\tau) = \int  \widehat{n}(\xi_1,\tau_1)  \widehat{u}(\xi - \xi_1, \tau - \tau_1) d\xi_1 d\tau_1. 
\ee
Hence
\[
\| nu\|_{X^{s_0+a_0,-b }}^2 = \left\| \int  \frac{\langle\xi\rangle^{s_0+a_0} \widehat{n}(\xi_1,\tau_1) \widehat{u}(\xi - \xi_1, \tau - \tau_1)}{\langle \tau  + \xi^2 \rangle^{b }}d\xi_1 d\tau_1 \right\|_{L^2_\xi L^2_\tau}^2 . \]
We define
\[ f(\xi,\tau) = |\widehat{n}(\xi,\tau)|\langle \xi \rangle^{s_1} \langle \tau -  \xi \rangle^{b }, \]
\[ g(\xi,\tau) = |\widehat{u}(\xi,\tau)|\langle \xi \rangle^{s_0} \langle \tau+ \xi^2 \rangle^{b }, \]
and
\be\label{eq:MS}
 M(\xi_1, \xi,\tau_1, \tau) = \frac{ \langle\xi\rangle^{s_0+a_0} \langle \xi_1 \rangle^{-s_1}  \langle \xi -\xi_1 \rangle ^{-s_0}}{\langle \tau +\xi ^2 \rangle ^{b } \langle \tau_1 -  \xi_1  \rangle ^{b }  \langle \tau - \tau_1   + (\xi - \xi_1  )^2 \rangle ^{b } }. 
 \ee
It is then sufficient to show that
\begin{align*} \left\| \int  M(\xi_1, \xi,\tau_1, \tau) f(\xi_1,\tau_1) g(\xi-\xi_1 ,\tau-\tau_1 ) d\xi_1 d\tau_1 \right\|_{L^2_\xi L^2_\tau}^2 \\
\lesssim \|f\|_{L^2}^2 \|g\|_{L^2}^2= \|n\|_{Y^{s_1,b}_+}^2\|u\|_{X^{s_0,b }}^2.
\end{align*}
By applying Cauchy-Schwarz in the $\xi_1 \tau_1$ integral and then using H\"{o}lder's inequality, we bound the norm above by
\begin{align*}
&\left\| \left( \int M^2 d\xi_1 d\tau_1 \right)^{1/2}  \left( \int f^2(\xi_1,\tau_1)g^2(\xi-\xi_1 ,\tau-\tau_1 )  d\xi_1 d\tau_1 \right)^{1/2} \right\|_{L^2_\xi L^2_\tau}^2 \\
&\leq \sup_{\xi ,\tau} \left( \int  M^2d\xi_1 d\tau_1  \right)   \left\| \int  f^2(\xi_1,\tau_1)g^2(\xi-\xi_1 ,\tau-\tau_1 )  d\xi_1 d\tau_1  \right\|_{L^1_\xi L^1_\tau}\\
&= \sup_{\xi ,\tau}  \left( \int  M^2d\xi_1 d\tau_1  \right)    \|f\|_{L^2}^2 \|g\|_{L^2}^2.
\end{align*}
Thus it is sufficient to show that the supremum above is finite.
Using Lemma~\ref{lem:sums} in the $\tau_1 $ integral, the supremum is bounded by
\[ \sup_{\xi, \tau} \int \frac{\langle\xi\rangle^{2s_0+2a_0} \langle \xi_1 \rangle^{-2s_1}  \langle \xi -\xi_1 \rangle ^{-2s_0}}{\langle \tau + \xi ^2 \rangle ^{2 b } \langle \tau- \xi_1    + (\xi - \xi_1 )^2 \rangle ^{4b -1} } d\xi_1. \]

Using the relation $\langle \tau - a \rangle \langle \tau - b \rangle \gtrsim \langle a-b \rangle$, the  above reduces to
\begin{align*}& \quad \sup_{\xi}  \int \frac{\langle\xi\rangle^{2s_0+2a_0} \langle \xi_1 \rangle^{-2s_1}  \langle \xi -\xi_1 \rangle ^{-2s_0}}{   \langle \xi^2+  \xi_1  - (\xi - \xi_1 )^2 \rangle^{1-} } d\xi_1  \\
&= \sup_{\xi} \int \frac{\langle\xi\rangle^{2s_0+2a_0} \langle \xi_1 \rangle^{-2s_1}  \langle \xi -\xi_1 \rangle ^{-2s_0}}{   \big\langle  \xi_1   -\xi_1^2+2 \xi_1 \xi   \big\rangle^{1-} } d\xi_1.
\end{align*}
Replacing $\xi$ with $\xi-\frac12$, it suffices to estimate
$$
\sup_{\xi} \int \frac{\langle\xi\rangle^{2s_0+2a_0} \langle \xi_1 \rangle^{-2s_1}  \langle \xi -\xi_1 \rangle ^{-2s_0}}{   \big\langle  \xi_1^2-2 \xi_1 \xi  \big\rangle^{1-} } d\xi_1.
$$
Note that for $|\xi|\les 1$, the integral is bounded by 
$$
 \int \frac{  \langle \xi_1 \rangle^{-2s_1-2s_0} }{   \big\langle  \xi_1   \big\rangle^{2-} } d\xi_1 \les 1, 
$$
provided that $s_0+s_1 >-\frac12$.

For $|\xi|\gg 1$, we consider the cases: i) $|\xi_1| <1,$ ii) $|\xi_1-2\xi|<1$, and iii) $|\xi_1| >1$ and $|\xi_1-2\xi|>1$.

In the first case, we estimate the integral after the change of variable $\rho=\xi_1^2-2\xi \xi_1$, $d\rho=2(\xi_1-\xi) d\xi_1$:
$$\sup_{|\xi|\gg 1}\int_{|\rho|\les |\xi|} \frac{|\xi|^{ 2a_0-1}    }{   \langle  \rho \rangle^{1-}  } d\rho\les 1,
$$
provided that $a_0<\frac12$.

By the same change of variable we estimate the integral in the second case by
$$\sup_{|\xi|\gg 1}\int_{|\rho|\les |\xi|} \frac{|\xi|^{ 2a_0-2s_1-1}    }{   \langle  \rho \rangle^{1-}  } d\rho\les 1,
$$
provided that $a_0<s_1+\frac12$.

In the third case, we bound the integral by (after the change of variable $\xi_1\to \xi_1+\xi$)
$$
\sup_{\xi} \int \frac{\langle\xi\rangle^{2s_0+2a_0}}{  \la \xi_1 -   \xi   \ra^{1-}   \langle  \xi_1 \rangle ^{ 2s_0} \langle \xi_1+\xi \rangle^{ 2s_1+1-}  }  d\xi_1.
$$ 
Using Lemma~\ref{lem:sums2}, we bound this by
$$
\langle\xi\rangle^{2s_0+2a_0} \la\xi\ra^{-2s_1-2s_0-2+ \max(1,2s_1+1,2s_0)+}= \la\xi\ra^{2a_0-2s_1-2+  \max(1,2s_1+1,2s_0)+},
$$
which is bounded for 
$$a_0<s_1+1-\frac12\max(1,2s_1+1,2s_0)=\min(s_1+\frac12,s_1-s_0+1).   \qedhere$$
\end{proof}
 \begin{proposition}\label{prop:smooth12} For any admissible $s_0, s_1$ satisfying $s_0-s_1<1$, and for any  
$$ 
\frac12-s_0<a_0<\min\left(\frac12, s_1+\frac12,s_1-s_0+1,\frac52-s_0\right),
$$ 
there exists $\epsilon>0$ such that for $\frac12-\epsilon<b <\frac12$, we have
\be\label{L2normint}
\Big\| \int_{\R } \la \lambda+\xi^2 \ra^{\frac{2(s_0+a_0)-3}{4}}   | \widehat{nu}(\xi,\lambda)| d\xi  \Big\|_{L^2_\lambda} \les \|u\|_{X^{s_0,b}}  \|n\|_{Y^{s_1,b}}.
\ee
\end{proposition}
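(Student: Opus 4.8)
The plan is to follow the bilinear reduction used for Proposition~\ref{prop:smooth}; the one structural difference is that the target is an $L^2_\lambda$ norm rather than an $L^2_{\xi,\lambda}$ norm, so the $\xi$-integration cannot be replaced by a supremum and must be carried inside the argument. As in that proof it suffices to work with $Y^{s_1,b}_+$. Writing $\widehat{nu}$ as the convolution \eqref{nuhat} and setting $f(\xi,\tau)=|\widehat n(\xi,\tau)|\la\xi\ra^{s_1}\la\tau-\xi\ra^{b}$ and $g(\xi,\tau)=|\widehat u(\xi,\tau)|\la\xi\ra^{s_0}\la\tau+\xi^2\ra^{b}$, so that $\|f\|_{L^2}=\|n\|_{Y^{s_1,b}_+}$ and $\|g\|_{L^2}=\|u\|_{X^{s_0,b}}$, the left-hand side of \eqref{L2normint} is bounded by the $L^2_\lambda$ norm of
$$\int \widetilde M\, f(\xi_1,\lambda_1)\, g(\xi-\xi_1,\lambda-\lambda_1)\, d\lambda_1\, d\xi_1\, d\xi, \qquad \widetilde M=\frac{\la\lambda+\xi^2\ra^{\frac{2(s_0+a_0)-3}{4}}\la\xi_1\ra^{-s_1}\la\xi-\xi_1\ra^{-s_0}}{\la\lambda_1-\xi_1\ra^{b}\la\lambda-\lambda_1+(\xi-\xi_1)^2\ra^{b}}.$$
First I would apply Cauchy--Schwarz in the internal variables $(\xi_1,\lambda_1,\xi)$, which bounds the inner integral pointwise in $\lambda$ by $P(\lambda)^{1/2}Q(\lambda)^{1/2}$, where $P(\lambda)=\int \widetilde M^2\, d\lambda_1 d\xi_1 d\xi$ and $Q(\lambda)=\int f^2(\xi_1,\lambda_1) g^2(\xi-\xi_1,\lambda-\lambda_1)\, d\lambda_1 d\xi_1 d\xi$. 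Since $\int Q(\lambda)\, d\lambda=\|f\|_{L^2}^2\|g\|_{L^2}^2$ by translation invariance, the estimate reduces to proving $\sup_\lambda P(\lambda)<\infty$.

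Next I would integrate out $\lambda_1$ in $P(\lambda)$ using Lemma~\ref{lem:sums}: for $b$ sufficiently close to $\tfrac12$ this replaces the $\lambda_1$-integral by $\la\lambda+(\xi-\xi_1)^2-\xi_1\ra^{1-4b}$, reducing matters to
$$\sup_\lambda \int \frac{\la\lambda+\xi^2\ra^{\frac{2(s_0+a_0)-3}{2}}\,\la\xi_1\ra^{-2s_1}\,\la\xi-\xi_1\ra^{-2s_0}}{\la\lambda+(\xi-\xi_1)^2-\xi_1\ra^{1-}}\, d\xi_1\, d\xi<\infty.$$
The identity that organizes the rest is
$$\lambda+(\xi-\xi_1)^2-\xi_1=(\lambda+\xi^2)-\xi_1(1+2\xi-\xi_1),$$
so the denominator measures the output modulation $\lambda+\xi^2$ against exactly the resonance function $\xi_1(1+2\xi-\xi_1)$ from Proposition~\ref{prop:smooth}, the new feature being that $\lambda+\xi^2$ now also appears, weighted, in the numerator.

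To estimate this double integral I would integrate in $\xi_1$ first, where the weight $\la\lambda+\xi^2\ra^{(2(s_0+a_0)-3)/2}$ is constant: changing variables along the resonance $\xi_1(1+2\xi-\xi_1)=\lambda+\xi^2$ (treating the stationary point $\xi_1=\xi+\tfrac12$ separately) and splitting into the regimes $|\xi|\les1$, $|\xi_1|<1$, $|\xi_1-(1+2\xi)|<1$ and the generic region handled by Lemma~\ref{lem:sums2}, in analogy with Proposition~\ref{prop:smooth}. These regions reproduce the constraints $s_0+s_1>-\tfrac12$, $a_0<\tfrac12$, $a_0<s_1+\tfrac12$ and $a_0<s_1-s_0+1$. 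The genuinely new --- and, I expect, the hardest --- step is the remaining $\xi$-integration against the factor $\la\lambda+\xi^2\ra^{(2(s_0+a_0)-3)/2}$, performed uniformly in $\lambda$; this has no analogue in Proposition~\ref{prop:smooth}, where the corresponding frequency was a supremum variable. The hypothesis $a_0>\tfrac12-s_0$ (equivalently $s_0+a_0>\tfrac12$, which by Proposition~\ref{prop:duhamelkato} is precisely the range in which this term is present) makes the exponent of $\la\lambda+\xi^2\ra$ exceed $-1$, while the requirement that the $\xi$-integral over the near-resonant set converge for every $\lambda$ --- where $\la\lambda+\xi^2\ra$ may be large throughout the resonant range --- is what forces $s_0+a_0<\tfrac52$, i.e.\ the new constraint $a_0<\tfrac52-s_0$. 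The careful uniform-in-$\lambda$ bookkeeping of this last integration is the main obstacle; the remaining pieces are routine variants of the computations in Proposition~\ref{prop:smooth}.
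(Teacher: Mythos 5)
Your first half coincides exactly with the paper's: Cauchy--Schwarz in $(\xi_1,\lambda_1,\xi)$ reduces \eqref{L2normint} to showing $\sup_\lambda P(\lambda)<\infty$, and Lemma~\ref{lem:sums} in $\lambda_1$ then reduces matters to the double integral \eqref{sups2}, organized by the resonance identity $\lambda-\xi_1+(\xi-\xi_1)^2=(\lambda+\xi^2)-\xi_1(1+2\xi-\xi_1)$. But at that point your proof stops: the uniform-in-$\lambda$ bound on \eqref{sups2} \emph{is} the content of the proposition, and you defer it (``I expect'', ``the main obstacle'') rather than prove it. That is a genuine gap, and the plan you sketch for closing it has a structural flaw. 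You propose to integrate in $\xi_1$ first, ``where the weight $\la\lambda+\xi^2\ra^{s_0+a_0-\frac32}$ is constant,'' splitting into the same regions as in Proposition~\ref{prop:smooth} and asserting that these regions ``reproduce'' the constraints $a_0<\frac12$, $a_0<s_1+\frac12$, $a_0<s_1-s_0+1$. They cannot: in Proposition~\ref{prop:smooth} those constraints came from the numerator weight $\la\xi\ra^{2s_0+2a_0}$, which involves the frequency variables being integrated, whereas here $a_0$ enters only through the modulation weight $\la\lambda+\xi^2\ra^{s_0+a_0-\frac32}$, which your scheme holds fixed during the $\xi_1$-integration. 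Consequently, in your ordering every constraint on $a_0$ must emerge from the final $\xi$-integration against that weight --- exactly the step you leave unexecuted --- and when $s_0+a_0\geq\frac32$ that step is genuinely delicate, since the weight \emph{grows} with $|\lambda+\xi^2|$ and you would need quantitative joint decay of the $\xi_1$-integral in $(\xi,\lambda+\xi^2)$, which the region analysis borrowed from Proposition~\ref{prop:smooth} does not provide.

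The missing idea, which the paper supplies, is to eliminate the $\lambda$-dependence \emph{before} integrating, by splitting according to the sign of the exponent $s_0+a_0-\frac32$. When $\frac32\leq s_0+a_0<\frac52$ one uses $\la\lambda+\xi^2\ra\les\la\lambda-\xi_1+(\xi-\xi_1)^2\ra\la\xi_1\ra\la\xi_1-2\xi\ra$ raised to the power $s_0+a_0-\frac32\in[0,1)$, so the modulation factor is absorbed by the denominator $\la\lambda-\xi_1+(\xi-\xi_1)^2\ra^{1-}$ (this is where $a_0<\frac52-s_0$ enters), leaving a $\lambda$-free two-dimensional frequency integral that yields $a_0<s_1+\frac12$ and $a_0<s_1-s_0+1$ by direct estimation. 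When $\frac12<s_0+a_0<\frac32$ one instead uses $\la a\ra\la b\ra\gtrsim\la a-b\ra$ with exponent $\frac32-s_0-a_0\in(0,1)$ (this is where $a_0>\frac12-s_0$ is used) to replace the two $\lambda$-dependent factors by $\la\xi_1(\xi_1-2\xi)\ra^{-(\frac32-s_0-a_0)}$, after which Lemma~\ref{lem:sums2} finishes. Note also that your reading of the hypothesis $a_0>\frac12-s_0$ as a convergence requirement (``exponent of $\la\lambda+\xi^2\ra$ exceeds $-1$'') is off: if that exponent were $\leq -1$ the weight would only decay faster and the inequality would be easier; the hypothesis marks the range in which the term appears in Proposition~\ref{prop:duhamelkato}, and its only role in the proof is to keep the exponent $\frac32-s_0-a_0$ below $1$ so the product inequality can be applied with that power.
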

\begin{proof}
We give the proof when there is the norm $ \|n\|_{Y^{s_1,b}_+} $ in the right hand side of \eqref{L2normint}, the case of $ \|n\|_{Y^{s_1,b}_-} $ is similar.
Using \eqref{nuhat}, we can bound the left hand side of \eqref{L2normint} by
\begin{multline*}
\Big\|  \int_{\R^3 } \la \lambda+\xi^2 \ra^{\frac{2(s_0+a_0)-3}{4}}  |\widehat{n}(\xi_1,\lambda_1)|  |\widehat{u}(\xi - \xi_1, \lambda - \lambda_1)| d\xi_1 d\lambda_1  d\xi  \Big\|_{L^2_\lambda}\\
= \Big\|  \int_{\R^3 } \frac{\la \lambda+\xi^2 \ra^{\frac{2(s_0+a_0)-3}{4}}  f(\xi_1,\lambda_1) g (\xi - \xi_1, \lambda - \lambda_1) }{ \la \xi_1\ra^{s_1} \la \xi-\xi_1\ra^{s_0} \la \lambda_1- \xi_1 \ra^b \la \lambda-\lambda_1+(\xi-\xi_1)^2\ra^b  }   d\xi_1 d\lambda_1  d\xi  \Big\|_{L^2_\lambda},
\end{multline*}
where $f$ and $g$ are as in the proof of Proposition~\ref{prop:smooth}. Applying Cauchy-Schwarz inequality in $\xi_1,\lambda_1,\xi$ variables it suffices to prove that
$$
\sup_\lambda \int_{\R^3 } \frac{\la \lambda+\xi^2 \ra^{ s_0+a_0 -\frac32}  }{ \la \xi_1\ra^{2s_1} \la \xi-\xi_1\ra^{2s_0} \la \lambda_1- \xi_1 \ra^{2b} \la \lambda-\lambda_1+(\xi-\xi_1)^2\ra^{2b} }   d\xi_1 d\lambda_1  d\xi  <\infty.
$$ 
Using Lemma~\ref{lem:sums} in the $\lambda_1$ integral, we bound the supremum by
\be\label{sups2}
\sup_\lambda \int_{\R^2 } \frac{\la \lambda+\xi^2 \ra^{ s_0+a_0 -\frac32}  }{ \la \xi_1\ra^{2s_1} \la \xi-\xi_1\ra^{2s_0}   \la \lambda- \xi_1 +(\xi-\xi_1)^2\ra^{1-}  }   d\xi_1   d\xi.
\ee

Case  a) $\frac32\leq s_0+a_0<\frac52$. Using 
$$
\la \lambda+\xi^2\ra \les \la \lambda- \xi_1 +(\xi-\xi_1)^2\ra \la \xi_1\ra \la \xi_1-2\xi\ra
$$
we bound \eqref{sups2} by 
$$
 \int_{\R^2 } \frac{\la \xi_1-2\xi\ra^{ s_0+a_0 -\frac32}  }{ \la \xi_1\ra^{2s_1+\frac32-s_0-a_0} \la \xi-\xi_1\ra^{2s_0}     }   d\xi_1   d\xi.
$$
For $|\xi|\les |\xi_1|$, we bound this by
$$
 \int_{\R^2 } \frac{1 }{ \la \xi_1\ra^{2s_1+3-2s_0-2a_0} \la \xi-\xi_1\ra^{2s_0}     }   d\xi_1   d\xi \les 1,
$$ 
by integrating first in $\xi$ then in $\xi_1$  since $2s_0>1$ and $2s_1+3-2s_0-2a_0>1$. For $|\xi|\gg|\xi_1|$, we bound it by
$$
 \int_{|\xi| \gg|\xi_1| } \frac{1 }{ \la \xi_1\ra^{2s_1+\frac32-s_0-a_0} \la \xi \ra^{ s_0-a_0+\frac32}     }   d\xi  d\xi_1 \les \int  \frac{1 }{ \la \xi_1\ra^{2s_1+2-2a_0}      }    d\xi_1 \les 1,
$$
since $a_0<s_1+\frac12$.

Case  b) $\frac12 <  s_0+a_0<\frac32$. Using the inequality $\langle \tau - a \rangle \langle \tau - b \rangle \gtrsim \langle a-b \rangle$ and the change of variable $\xi\to \xi- \frac12$, we bound \eqref{sups2} by
$$
\int_{\R^2 } \frac{1  }{ \la \xi_1\ra^{2s_1} \la \xi-\xi_1\ra^{2s_0}   \la \xi_1(\xi_1-2\xi)\ra^{\frac32-s_0-a_0}  }   d\xi_1   d\xi.
$$
The resonant cases when $|\xi_1|\les 1$ or $|\xi_1-2\xi|\les 1$ can be handled as before. In the remaining case, we bound this integral by 
$$
\int_{\R^2 } \frac{1  }{ \la \xi_1\ra^{2s_1+\frac32-s_0-a_0} \la \xi-\xi_1\ra^{2s_0}   \la  \xi_1-2\xi \ra^{\frac32-s_0-a_0}  }   d\xi_1   d\xi.
$$
 Using Lemma~\ref{lem:sums2} in the $\xi_1$ integral we bound this by 
 $$
\int_{\R  } \frac{1  }{ \la \xi \ra^{2s_1+3 -2a_0 -\ell +}  }     d\xi,
$$
where 
$$
\ell=\max(1,2s_1+\frac32-s_0-a_0,2s_0,\frac32-s_0-a_0).
$$
We leave it to the reader to check that the integral above is finite under the hypothesis of the proposition, noting in addition that $s_0+s_1>0$.   
\end{proof}
 
Proposition~\ref{prop:smooth2}, Proposition~\ref{prop:smooth22}, and  Proposition~\ref{prop:smooth23} below establish the smoothing for the nonlinear wave part:
 \begin{proposition}\label{prop:smooth2} For any admissible $s_0, s_1$  and for any  
$$ 
a_1<\min\left(s_0-s_1,2s_0-s_1-\frac12\right).
$$ 
there exists $\epsilon>0$ such that for $\frac12-\epsilon<b <\frac12$, we have
 $$
 \| \partial_x  |u|^2 \|_{Y^{s_1+a_1,-b}_\pm} \les \|u\|_{X^{s_0,b}}^2.
 $$
\end{proposition}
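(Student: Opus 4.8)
The plan is to follow the scheme of Proposition~\ref{prop:smooth}; I treat only the $+$ case, the $-$ case being symmetric. Writing the Fourier transform of $|u|^2$ as the convolution
\be\label{uubar}
\widehat{|u|^2}(\xi,\tau)=\int \widehat u(\xi_1,\tau_1)\,\overline{\widehat u(\xi_1-\xi,\tau_1-\tau)}\,d\xi_1\,d\tau_1,
\ee
and setting $g(\xi,\tau)=|\widehat u(\xi,\tau)|\la\xi\ra^{s_0}\la\tau+\xi^2\ra^{b}$ so that $\|g\|_{L^2}=\|u\|_{X^{s_0,b}}$, the left side of the asserted bound carries the derivative weight $|\xi|$, the output regularity $\la\xi\ra^{s_1+a_1}$, and the output modulation $\la\tau-\xi\ra^{-b}$ for the $+$ wave. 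After peeling off the two copies of $g$ by Cauchy--Schwarz in the inner variables $(\xi_1,\tau_1)$ followed by H\"older, exactly as in Proposition~\ref{prop:smooth}, the estimate reduces to the finiteness of $\sup_{\xi,\tau}\int M^2\,d\xi_1\,d\tau_1$, where
\be\label{Mwave}
M=\frac{\la\xi\ra^{s_1+a_1}\,|\xi|\,\la\xi_1\ra^{-s_0}\la\xi_1-\xi\ra^{-s_0}}{\la\tau-\xi\ra^{b}\,\la\tau_1+\xi_1^2\ra^{b}\,\la\tau_1-\tau+(\xi_1-\xi)^2\ra^{b}}.
\ee

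First I would carry out the $\tau_1$-integration using Lemma~\ref{lem:sums}; since the two Schr\"odinger modulations are centered at $\tau_1=-\xi_1^2$ and $\tau_1=\tau-(\xi_1-\xi)^2$, whose difference is $-\tau-2\xi\xi_1+\xi^2$, this produces the single factor $\la\tau+2\xi\xi_1-\xi^2\ra^{1-4b}$. Combining it with the surviving output modulation $\la\tau-\xi\ra^{-2b}$ through the elementary relation $\la\tau-\alpha\ra\la\tau-\beta\ra\gtrsim\la\alpha-\beta\ra$, with $\alpha-\beta=\xi(\xi-1-2\xi_1)$, eliminates $\tau$ and reduces matters to
\be\label{reducedwave}
\sup_{\xi}\ \la\xi\ra^{2(s_1+a_1)}\,\xi^2\int_\R \frac{\la\xi_1\ra^{-2s_0}\la\xi_1-\xi\ra^{-2s_0}}{\la\xi(\xi-1-2\xi_1)\ra^{1-}}\,d\xi_1<\infty,
\ee
where for $b$ close to $\frac12$ both modulation exponents $2b$ and $4b-1$ are close to $1$, so the resulting power is $1-$.

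The region $|\xi|\les 1$ is harmless: there the output weight is $O(1)$ and the derivative factor $\xi^2$, together with $s_0>0$, absorbs the mild divergence of the $\xi_1$-integral (this is cleanest if one performs the change of variables $\rho=\xi(\xi-1-2\xi_1)$, of Jacobian $\sim|\xi|$, \emph{before} combining modulations). For $|\xi|\gg 1$ I would split \eqref{reducedwave} according to the location of $\xi_1$. When $\xi_1$ lies within $O(1)$ of an endpoint $0$ or $\xi$, one has $\la\xi(\xi-1-2\xi_1)\ra\sim|\xi|^2$ and $\la\xi_1\ra^{-2s_0}\la\xi_1-\xi\ra^{-2s_0}\sim\la\xi\ra^{-2s_0}$; integrating over the unit neighborhood bounds this contribution by $\la\xi\ra^{2s_1+2a_1-2s_0+}$, which is the origin of the constraint $a_1<s_0-s_1$. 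The remaining contributions come from the modulation-resonant set $\xi_1\approx\xi/2$ (where $\xi-1-2\xi_1\approx 0$) and from the high tail $|\xi_1|\gg|\xi|$; on both, $\la\xi_1\ra^{-2s_0}\la\xi_1-\xi\ra^{-2s_0}\sim|\xi_1|^{-4s_0}$, and the change of variables $\rho=\xi(\xi-1-2\xi_1)$ followed by $\int\la\rho\ra^{-(1-)}$ over the relevant range yields the bound $\la\xi\ra^{2s_1+2a_1+1-4s_0+}$, giving the second constraint $a_1<2s_0-s_1-\frac12$. The intermediate values of $\xi_1$ interpolate between these regimes and are controlled by Lemma~\ref{lem:sums2} after extracting one power of $|\xi|$ from $\la\xi(\xi-1-2\xi_1)\ra$; admissibility ($2s_0>s_1+\frac12$, $s_0>s_1$, $s_0>0$) ensures they are non-dominant.

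The main obstacle is the sharp treatment of the bilinear high$\times$high interaction --- the resonant set $\xi_1\approx\xi/2$ together with the tail $|\xi_1|\gg|\xi|$ --- where the output weight $\la\xi\ra^{2a_1}$ and the derivative $\xi^2$ must be paid for entirely by the two input weights $|\xi_1|^{-4s_0}$ and by the $|\xi|^{-1}$ gain from the Jacobian of $\rho=\xi(\xi-1-2\xi_1)$; it is precisely here that the $L^2$-convolution loss of $\frac12$ derivative surfaces and pins down the threshold $2s_0-s_1-\frac12$. A secondary technical point is to keep every inequality strict: the exponents produced above are only \emph{close} to $1$, so one fixes $\epsilon>0$ and then chooses $\frac12-\epsilon<b<\frac12$ small enough that the cumulative $0+/1-$ losses in each region stay below the positive gap $\min(s_0-s_1,\,2s_0-s_1-\frac12)-a_1$.
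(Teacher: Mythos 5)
Your proposal is correct and follows essentially the same route as the paper's proof: the same Cauchy--Schwarz/H\"older reduction to $\sup_{\xi,\tau}\int M^2$, the same $\tau_1$-integration via Lemma~\ref{lem:sums} combined with $\la\tau-\alpha\ra\la\tau-\beta\ra\gtrsim\la\alpha-\beta\ra$, the same change of variables $\rho=\xi(\xi-2\xi_1)$ (you keep the harmless shift explicit rather than translating $\xi_1$), and the same case analysis in which the resonant set $\xi_1\approx\xi/2$ yields $a_1<2s_0-s_1-\tfrac12$ and the endpoint/Lemma~\ref{lem:sums2} regions yield $a_1<s_0-s_1$. The only difference is cosmetic: the paper treats everything outside $|\xi-2\xi_1|<1$ in a single application of Lemma~\ref{lem:sums2}, while you separate the endpoint neighborhoods and the high tail explicitly before invoking that lemma.
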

\begin{proof}  
As in the proof of Proposition~\ref{prop:smooth}, it suffices to prove that (we only consider the norm $Y_+^{s_1+a_1,-b}$)
$$
\sup_{\xi ,\tau}  \left( \int  M^2d\xi_1 d\tau_1  \right)<\infty,
$$
where
\[ M(\xi_1, \xi,\tau_1, \tau) = \frac{ \langle\xi\rangle^{s_1+a_1}  |\xi| \langle \xi_1 \rangle^{-s_0}  \langle \xi -\xi_1 \rangle ^{-s_0}}{\langle \tau - \xi  \rangle ^{b } \langle \tau_1 -  \xi_1^2 \rangle ^{b }  \langle \tau - \tau_1   + (\xi - \xi_1  )^2 \rangle ^{b } }. \]
As before we bound the supremum by
\begin{multline*}
\sup_{\xi }    \int \frac{ \langle\xi\rangle^{2s_1+2a_1 }  |\xi|^2 \langle \xi_1 \rangle^{-2s_0}  \langle \xi -\xi_1 \rangle ^{-2s_0}}{  \langle  \xi  -\xi_1^2 + (\xi - \xi_1  )^2 \rangle ^{1-} }  d\xi_1 \\
\les \sup_{\xi }    \int \frac{ \langle\xi\rangle^{2s_1+2a_1 } |\xi|^2 \langle \xi_1 \rangle^{-2s_0}  \langle \xi -\xi_1 \rangle ^{-2s_0}}{  \langle \xi^2 -2\xi \xi_1 \rangle ^{1-} }  d\xi_1.
\end{multline*}
In the second line we made the change of variable $\xi_1\to\xi_1+ \frac12$.

For $|\xi|\les 1$, we estimate the supremum by 
$$
 \sup_{|\xi|\les 1 }    \int \frac{  |\xi|^2 }{  \langle \xi_1 \rangle^{ 4s_0}  \langle  \xi \xi_1 \rangle ^{1-} }  d\xi_1 \les 1,
$$
provided that $s_0>0$.

For $|\xi|\gg 1$ we have the cases: $|\xi-2\xi_1|<1$ and $|\xi-2\xi_1|>1$. In the former case we bound the integral by
$$
 \int_{|\xi-2\xi_1|<1} \frac{ \langle\xi\rangle^{2s_1+2a_1+2-4s_0 } }{  \langle \xi^2 -2\xi \xi_1 \rangle ^{1-} }  d\xi_1 = \int_{|\rho|<1} \frac{ \langle\xi\rangle^{2s_1+2a_1+2-4s_0 } }{  \langle \xi \rho \rangle ^{1-} }  d\rho \les \langle\xi\rangle^{2s_1+2a_1+1-4s_0 +},
$$
which is bounded provided that $a_1<2s_0 -s_1 -\frac12$. 

In the latter case we bound the integral by 

$$
 \int \frac{ \langle\xi\rangle^{2s_1+2a_1 +1+} }{  \langle \xi_1 \rangle^{ 2s_0}  \langle \xi -\xi_1 \rangle ^{ 2s_0} \langle  \xi- 2 \xi_1 \rangle ^{1-} }  d\xi_1.
$$
Using Lemma~\ref{lem:sums2}, we bound this by
$$
\langle\xi\rangle^{2s_1+2a_1+1+} \la \xi\ra^{-1-4s_0+\max(2s_0, 1)+}=\langle\xi\rangle^{2s_1+2a_1  -4s_0+\max(2s_0, 1)+},
$$
which is bounded provided that 
$$
a_1<\min(s_0-s_1,2s_0-s_1-\frac12).   \qedhere
$$
\end{proof}

 \begin{proposition}\label{prop:smooth22} For any admissible $s_0, s_1$  and for any  
$$ 
\frac12-s_1<a_1<\min\left(s_0-s_1,2s_0-s_1-\frac12,\frac32-s_1\right).
$$ 
there exists $\epsilon>0$ such that for $\frac12-\epsilon<b <\frac12$, we have
 $$
 \left\| \int_{\R }  \la \lambda\mp  \xi  \ra^{s_1+a_1-1}   |  \mathcal F( \partial_x  |u|^2 )(\xi,\lambda)| d\xi  \right\|_{L^2_\lambda}     \les \|u\|_{X^{s_0,b}}^2 .
 $$
\end{proposition}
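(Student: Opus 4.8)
The plan is to mirror the structure of Proposition~\ref{prop:smooth2}, but with the explicit weight $\la\lambda\mp\xi\ra^{s_1+a_1-1}$ playing the role that the harmless modulation factor $\la\tau\mp\xi\ra^{b}$ of the $Y_\pm^{s_1+a_1,-b}$ norm played there, exactly as Proposition~\ref{prop:smooth12} is the weighted analogue of Proposition~\ref{prop:smooth}. I would treat only the weight $\la\lambda-\xi\ra^{s_1+a_1-1}$ (the other sign being identical), write $\mathcal F(\partial_x|u|^2)(\xi,\lambda)=i\xi\,\widehat{|u|^2}(\xi,\lambda)$, and expand $|u|^2=u\bar u$ as a convolution with the substitution $g(\xi,\lambda)=|\widehat u(\xi,\lambda)|\la\xi\ra^{s_0}\la\lambda+\xi^2\ra^{b}$, so that $\|g\|_{L^2}=\|u\|_{X^{s_0,b}}$. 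After a Cauchy--Schwarz in $(\xi_1,\lambda_1,\xi)$ and an integration of the two modulation factors in $\lambda_1$ using Lemma~\ref{lem:sums}, together with the identity $\xi_1^2-(\xi-\xi_1)^2=2\xi\xi_1-\xi^2=\xi(2\xi_1-\xi)$, the estimate reduces to showing
$$
\sup_\lambda\int_{\R^2}\frac{\la\lambda-\xi\ra^{2(s_1+a_1-1)}\,|\xi|^2\,\la\xi_1\ra^{-2s_0}\la\xi-\xi_1\ra^{-2s_0}}{\la\lambda-\xi(2\xi_1-\xi)\ra^{1-}}\,d\xi_1\,d\xi<\infty.
$$

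The hypotheses $\tfrac12-s_1<a_1<\tfrac32-s_1$ force $\tfrac12<s_1+a_1<\tfrac32$, so the weight exponent $2(s_1+a_1-1)$ lies in $(-1,1)$ and must be disposed of before the frequency integrals. This is the one genuinely new point: both the weight $\la\lambda-\xi\ra$ and the combined modulation $\la\lambda-\xi(2\xi_1-\xi)\ra$ depend on the frozen variable $\lambda$, so, unlike in Proposition~\ref{prop:smooth2}, one cannot integrate $\lambda$ out. As in Proposition~\ref{prop:smooth12} I would split on the sign of the exponent. When $s_1+a_1\le1$ the weight is a negative power, and I would use $\la\lambda-\xi\ra\,\la\lambda-\xi(2\xi_1-\xi)\ra\gtrsim\la\xi(2\xi_1-\xi-1)\ra$ to bound it (this is the case (b) argument of Proposition~\ref{prop:smooth12}); the residual power of the modulation is negative because $s_1+a_1>\tfrac12$ and may be dropped. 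When $1<s_1+a_1<\tfrac32$ I would instead use $\la\lambda-\xi\ra\les\la\lambda-\xi(2\xi_1-\xi)\ra\,\la\xi(2\xi_1-\xi-1)\ra$ to spread the positive power; the leftover modulation power $\la\lambda-\xi(2\xi_1-\xi)\ra^{2s_1+2a_1-3+}$ is again negative precisely because $s_1+a_1<\tfrac32$, which is the origin of the condition $a_1<\tfrac32-s_1$.

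In either case I am reduced to a two-dimensional integral in $(\xi,\xi_1)$ of exactly the type treated at the end of Proposition~\ref{prop:smooth2}, now carrying the extra frequency factor $\la\xi(2\xi_1-\xi-1)\ra^{2(s_1+a_1-1)}$. For $|\xi|\les1$ convergence follows at once from the admissibility conditions $s_0>0$ and $s_0+s_1>0$. For $|\xi|\gg1$ I would split according to $|2\xi_1-\xi|<1$ and $|2\xi_1-\xi|>1$; in the resonant region the change of variables $\rho=\xi(2\xi_1-\xi)$ does not improve the factor $|\xi|^2$, and balancing it against the decay $\la\xi\ra^{-4s_0}$ yields $a_1<2s_0-s_1-\tfrac12$, while in the non-resonant region Lemma~\ref{lem:sums2} controls the $\xi_1$ integral and the surviving powers of $\la\xi\ra$ give $a_1<s_0-s_1$.

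The hard part will be the first step. Because $\lambda$ sits under the supremum, the weight $\la\lambda-\xi\ra^{2(s_1+a_1-1)}$ is genuinely growing when $s_1+a_1>1$, a range that is nonempty exactly when $s_0>1$. The crux is to check that transferring this weight leaves a strictly negative residual power on the modulation—so the $\lambda$-dependence can be removed uniformly—without destroying the cancellation that tames the factor $|\xi|^2$ from the derivative in the resonant region. The threshold $a_1<\tfrac32-s_1$ is what secures the former and $a_1<2s_0-s_1-\tfrac12$ the latter, so that the reduced integral converges uniformly in $\lambda$.
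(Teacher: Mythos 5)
Your opening reduction (Cauchy--Schwarz in $(\xi_1,\lambda_1,\xi)$, Lemma~\ref{lem:sums} in $\lambda_1$, leading to the weighted supremum over $\lambda$ of a $(\xi,\xi_1)$-integral with the combined modulation $\la \lambda+\xi(\xi-2\xi_1)\ra^{-1+}$) and your case split at $s_1+a_1=1$ both match the paper. The gap is in your central step: transferring the weight $\la\lambda-\xi\ra^{2(s_1+a_1-1)}$ onto the frequency factor $\la\xi(2\xi_1-\xi-1)\ra$ via the triangle inequality, noting the residual modulation power is negative, and then \emph{discarding} it to reduce to a $\lambda$-free two-dimensional integral. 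That reduced integral diverges on part of the claimed range. Test the region $|\xi|\gg1$, $|\xi-\xi_1|\leq 1$: there $\la\xi_1\ra^{-2s_0}\la\xi-\xi_1\ra^{-2s_0}\sim\la\xi\ra^{-2s_0}$ and $\xi(2\xi_1-\xi-1)\sim\xi^2$, so in both of your cases the integrand is $\sim |\xi|^{4(s_1+a_1)-2s_0-2}$ over a set of $\xi_1$-measure $\sim 1$, and the $\xi$-integral converges only if $a_1<\frac{s_0}{2}-s_1+\frac14$. This is strictly stronger than the hypotheses: for the admissible pair $s_0=2$, $s_1=1$ the proposition allows any $a_1<\frac12$, but for $a_1\in[\frac14,\frac12)$ your reduced integral is bounded below by $\int^\infty |\xi|^{4a_1-2}\,d\xi=\infty$. (A secondary problem: in your case $1<s_1+a_1<\frac32$ the transferred factor $\la\xi(2\xi_1-\xi-1)\ra^{2(s_1+a_1-1)}$ sits in the numerator with a positive exponent, so Lemma~\ref{lem:sums2}, which is stated for products of decaying factors, does not apply as claimed.)

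The reason this cannot be repaired within your scheme is that in exactly this region the resonance function is $\xi(\xi-2\xi_1)\approx -\xi^2$, so the modulation factor $\la\lambda+\xi(\xi-2\xi_1)\ra^{-1+}\approx\la\lambda-\xi^2\ra^{-1+}$ is what makes the $\xi$-integral converge; once it is spent in the triangle inequality and dropped, no admissible bookkeeping of frequency powers recovers it. The paper keeps it throughout: after the change of variable $\rho=\xi(\xi-2\xi_1)$ and the reduction $|\xi-\rho/\xi|\gtrsim|\xi|$, it uses only part of the frequency weight to integrate in $\rho$ (producing $\la\lambda-\xi^2\ra^{-1+}$ when $s_1+a_1\geq 1$, and only an $\epsilon$-power when $s_1+a_1<1$), then in the case $1\leq s_1+a_1<\frac32$ splits $\la\lambda-\xi\ra\les\la\lambda-\xi^2\ra+\la\xi\ra^2$, so the weight lands either on $\la\xi\ra^2$ (and the surviving $\la\lambda-\xi^2\ra^{-1+}$ rescues the dangerous term, which has the same frequency power $|\xi|^{4(s_1+a_1)-2s_0-2}$ as yours) or on the parabolic modulation; the proof finishes with one-dimensional integrals in $\eta=\xi^2\pm\xi$ of the form $\int\la\eta\ra^{-\gamma}\la\lambda-\eta\ra^{-\beta}\,d\eta$, uniformly bounded in $\lambda$ by Lemma~\ref{lem:sums}. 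In the case $\frac12<s_1+a_1<1$ the weight is itself negative-power decay centered at $\lambda$ and is kept as such, yielding $\int\la\lambda-\xi\ra^{-(2-2s_1-2a_1)}\la\xi\ra^{-(2s_0-1)+}\,d\xi\les 1$ under $a_1<s_0-s_1$. In short: the $\lambda$-dependence must survive until the last one-dimensional integration; removing it up front is the step that fails.
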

\begin{proof} We provide the proof only for the '-' sign in the integral. We consider
$$
 \left\| \int_{\R }  \la \lambda-  \xi  \ra^{s_1+a_1-1}  |\xi|  |\widehat u( \xi_1, \lambda_1) |   |\widehat u(\xi-\xi_1,\lambda -\lambda_1)  |  d\xi_1d\lambda_1 d\xi  \right\|_{L^2_\lambda}.
$$
 
As in the proof of Proposition~\ref{prop:smooth12}, it suffices to prove that 
$$
\sup_\lambda \int_{\R^3 } \frac{\la \lambda- \xi   \ra^{2s_1+2a_1-2  } \,\,  |\xi|^2 }{ \la \xi_1\ra^{2s_0} \la \xi-\xi_1\ra^{2s_0} \la \lambda_1- \xi_1^2\ra^{2b} \la \lambda-\lambda_1+(\xi-\xi_1)^2\ra^{2b} }   d\xi_1 d\lambda_1  d\xi  <\infty.
$$ 
Using Lemma~\ref{lem:sums} in the $\lambda_1$ integral,  we bound the supremum by
\be\label{bothregions}  \sup_\lambda \int_{\R^2 } \frac{\la \lambda- \xi   \ra^{2s_1+2a_1-2  } \,\,  |\xi|^2 }{ \la \xi_1\ra^{2s_0} \la \xi-\xi_1\ra^{2s_0}   \la \lambda  +\xi  (\xi-2\xi_1) \ra^{1-} }   d\xi_1  d\xi.
\ee
By the change of variable $\rho:=\xi(\xi-2\xi_1)$ in the $\xi_1$ integral, we  bound the integral as
$$\sup_\lambda \int_{\R^2 } \frac{\la \lambda- \xi   \ra^{2s_1+2a_1-2  } \,\,  |\xi|  }{ \la \xi-\frac\rho\xi\ra^{2s_0} \la \xi+\frac\rho\xi\ra^{2s_0}   \la \lambda  +\rho \ra^{1-} }   d\rho   d\xi.
$$
Without loss of generality $|\xi-\frac\rho\xi|\gtrsim |\xi|$, which leads to the bound 
\be\label{regionii}
\sup_\lambda \int_{\R^2 } \frac{\la \lambda- \xi  \ra^{2s_1+2a_1-2  } \,\,  |\xi| \la \xi\ra^{-2s_0} }{ \la \xi+\frac\rho\xi\ra^{2s_0}   \la \lambda  +\rho \ra^{1-} }   d\rho   d\xi.
\ee
First we consider the case $1\leq s_1+a_1<\frac32$. Using $s_0>\frac12$, then integrating in $\rho$ we bound \eqref{regionii} by
$$
 \sup_\lambda \int_{\R^2 } \frac{\la \lambda- \xi   \ra^{2s_1+2a_1-2  } \,\,  |\xi|^2 \la \xi\ra^{-2s_0} }{  |\xi^2+\rho|^{1-}   \la \lambda  +\rho \ra^{1-} }   d\rho   d\xi\les \sup_\lambda \int_{\R } \frac{\la \lambda- \xi   \ra^{2s_1+2a_1-2  } \,\,  |\xi|^2 \la \xi\ra^{-2s_0} }{    \la \lambda -\xi^2 \ra^{1-} }     d\xi.
$$
Note that  $\la \lambda- \xi  \ra \les  \la \lambda -\xi^2 \ra + \la \xi\ra^2 $. Thus, we bound the integral by
$$
\sup_\lambda \int_{\R } \frac{  |\xi|^2 \la \xi\ra^{-2s_0+4s_1+4a_1-4 } }{    \la \lambda  -\xi^2 \ra^{1-} }     d\xi +\sup_\lambda \int_{\R } \frac{  |\xi|^2 \la \xi\ra^{-2s_0 } }{    \la \lambda  -\xi^2 \ra^{3-2s_1-2a_1-} }     d\xi .
$$
For $|\xi|\les 1$, these integrals are finite. For $|\xi|\gg 1$,   with the change of variable $\eta=\xi^2\pm \xi \gtrsim \xi^2 \gg 1 $, we bound these integrals by
$$  \sup_\lambda \int_{\R } \frac{ \la \eta\ra^{-\frac32+2s_1+2a_1- s_0} }{   \la \lambda-\eta \ra^{1-}    }     d\eta  +  \sup_\lambda \int_{\R } \frac{ \la \eta\ra^{\frac12- s_0} }{   \la \lambda-\eta \ra^{3-2s_1-2a_1-}    }     d\eta \les 1,
$$
provided that $-\frac32+2s_1+2a_1- s_0 <0$. This holds since $s_1+a_1<\frac32$ and $s_1+a_1<s_0$.

Now we consider the case $\frac12<s_1+a_1<1$.   Using $s_0>0$, we bound \eqref{regionii}  by 
$$
\sup_\lambda \int_{\R^2 } \frac{\la \lambda- \xi  \ra^{2s_1+2a_1-2  } \,\,  |\xi| \la \xi\ra^{-2s_0+} }{  |\xi^2+\rho|^{0+}   \la \lambda +\rho \ra^{1-} }   d\rho   d\xi\les \sup_{\lambda} \int_\R \frac{1 }{ \la \lambda- \xi   \ra^{2-2s_1-2a_1 }  \la \xi\ra^{2s_0-1-} }    d\xi \les 1,
$$
since $s_0>\frac12,$ $s_1+a_1<1$, and $a_1<s_0-s_1$.
\end{proof}
The following proposition is relevant only when $-\frac12<s_1+a_1<0$, which appears in the range of $a_1$ in the statement. This does not introduce an additional restriction on the parameter $a_1$ since the integral in the statement does not appear in other cases. 
 \begin{proposition}\label{prop:smooth23} For any admissible $s_0, s_1$  and for any  
$$ 
-\frac12-s_1<a_1<\min\Big(s_0-s_1,2s_0-s_1-\frac12, -s_1\Big).
$$ 
there exists $\epsilon>0$ such that for $\frac12-\epsilon<b <\frac12$, we have
 $$
 \left\| \la \lambda \ra^{s_1+a_1} \int_{\R } \frac{  |  \mathcal F( \partial_x  |u|^2 )(\xi,\lambda)|}{ \la \lambda\mp  \xi  \ra}  d\xi  \right\|_{L^2_\lambda}     \les \|u\|_{X^{s_0,b}}^2 .
 $$
\end{proposition}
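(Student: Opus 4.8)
The plan is to follow the template of Propositions~\ref{prop:smooth12} and~\ref{prop:smooth22}, since the quantity here is exactly the boundary term appearing in the range $-\frac12<s<0$ of Proposition~\ref{prop:duhamelkatoW} (with $s=s_1+a_1$ and $G=\partial_x|u|^2$). By symmetry it suffices to treat the `$-$' sign, i.e. to bound $\big\|\la\lambda\ra^{s_1+a_1}\int_\R \la\lambda-\xi\ra^{-1}|\F(\partial_x|u|^2)(\xi,\lambda)|\,d\xi\big\|_{L^2_\lambda}$. Writing $\F(\partial_x|u|^2)=i\xi\,\F(|u|^2)$ and expanding $\F(|u|^2)$ as a convolution as in \eqref{nuhat} (one factor carrying the weight $\la\lambda_1-\xi_1^2\ra^b$, the other $\la\lambda-\lambda_1+(\xi-\xi_1)^2\ra^b$), I replace $|\widehat u|$ by $g(\xi,\tau)=|\widehat u(\xi,\tau)|\la\xi\ra^{s_0}\la\tau+\xi^2\ra^b$, so that $\|g\|_{L^2}=\|u\|_{X^{s_0,b}}$. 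Applying Cauchy--Schwarz in $(\xi_1,\lambda_1,\xi)$ exactly as in those proofs, it then suffices to show
$$
\sup_\lambda \int M^2\, d\xi_1\,d\lambda_1\,d\xi<\infty,
\qquad
M=\frac{\la\lambda\ra^{s_1+a_1}\,|\xi|\,\la\xi_1\ra^{-s_0}\la\xi-\xi_1\ra^{-s_0}}{\la\lambda-\xi\ra\,\la\lambda_1-\xi_1^2\ra^{b}\,\la\lambda-\lambda_1+(\xi-\xi_1)^2\ra^{b}}.
$$

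Next I integrate out $\lambda_1$ using Lemma~\ref{lem:sums} with $b$ close to $\frac12$ (so $4b-1=1-$); since $\xi_1^2-\big(\lambda+(\xi-\xi_1)^2\big)=-\lambda-\xi(\xi-2\xi_1)$, this produces the resonance factor $\la\lambda+\xi(\xi-2\xi_1)\ra^{-(1-)}$ and reduces the claim to
$$
\sup_\lambda \int \frac{\la\lambda\ra^{2s_1+2a_1}\,|\xi|^2\,\la\xi_1\ra^{-2s_0}\la\xi-\xi_1\ra^{-2s_0}}{\la\lambda-\xi\ra^{2}\,\la\lambda+\xi(\xi-2\xi_1)\ra^{1-}}\, d\xi_1\, d\xi<\infty.
$$
The contribution of $|\xi|\les1$ is harmless (there $\la\lambda\ra^{2s_1+2a_1}\le1$ and $\la\lambda-\xi\ra^{-2}$ is integrable), so I focus on $|\xi|\gg1$, perform the change of variables $\rho=\xi(\xi-2\xi_1)$ in the $\xi_1$-integral (gaining a factor $|\xi|^{-1}$), and use $\max(|\xi_1|,|\xi-\xi_1|)\gtrsim|\xi|$ together with a symmetric splitting to bound $\la\xi-\xi_1\ra^{-2s_0}\les\la\xi\ra^{-2s_0}$. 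Up to this point the computation mirrors \eqref{bothregions}--\eqref{regionii}; the only structural difference from Proposition~\ref{prop:smooth22} is that the single time weight $\la\lambda-\xi\ra^{2s_1+2a_1-2}$ there is now replaced by the \emph{split} weight $\la\lambda\ra^{2s_1+2a_1}\la\lambda-\xi\ra^{-2}$.

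The main obstacle is to control this split weight uniformly in $\lambda$. Since the relevant range is $-\frac12<s_1+a_1<0$, the factor $\la\lambda\ra^{2s_1+2a_1}$ is a negative power and hence furnishes decay in $\lambda$; I transfer this decay onto the other brackets via $\la\lambda\ra\les\la\lambda-\xi\ra\la\xi\ra$ and its variant $\la\lambda-\xi\ra\les\la\lambda\ra\la\xi\ra$, deciding in each region whether to deposit it on $\la\lambda-\xi\ra$ or on $\la\xi\ra$. Splitting into the resonant regions (where $\la\xi_1\ra$ or $\la\xi-\xi_1\ra$ is $O(1)$), handled as in Proposition~\ref{prop:smooth22}, and the non-resonant region, and then applying Lemma~\ref{lem:sums2} to the resulting one-dimensional integral, the endpoint exponents reproduce precisely the constraints $a_1<s_0-s_1$ and $a_1<2s_0-s_1-\frac12$ already obtained in Proposition~\ref{prop:smooth22}. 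Because $s_1+a_1<0$, the additional decay $\la\lambda\ra^{2s_1+2a_1}$ (and the $\la\xi\ra$-decay it can be converted into) only improves convergence, so the window $-\frac12-s_1<a_1<-s_1$ imposes no further restriction, in agreement with the remark preceding the statement. I expect the only genuinely delicate point to be this uniform-in-$\lambda$ bookkeeping of where to spend the $\la\lambda\ra^{2s_1+2a_1}$ weight; once that is arranged, the remaining integrals are estimated verbatim as in Propositions~\ref{prop:smooth12} and~\ref{prop:smooth22}.
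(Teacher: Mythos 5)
Your proposal is correct and follows essentially the same route as the paper's own proof: the identical Cauchy--Schwarz reduction, integration in $\lambda_1$ via Lemma~\ref{lem:sums}, the change of variables $\rho=\xi(\xi-2\xi_1)$ with the $\max(|\xi_1|,|\xi-\xi_1|)\gtrsim|\xi|$ reduction, and then the transfer of the decay $\la\lambda\ra^{2s_1+2a_1}$ onto the other brackets via $\la\xi\ra\les\la\lambda\ra\la\lambda-\xi\ra$ --- which is exactly how the paper finishes, splitting into the cases $\la\lambda-\xi\ra\gtrsim\la\xi\ra^2$ (where the $\lambda$-decay is simply discarded) and $\la\lambda-\xi^2\ra\gtrsim\la\xi\ra^2$ (where it is deposited on $\la\xi\ra$, leaving $\la\lambda-\xi\ra^{-2-2s_1-2a_1}$ still integrable since $s_1+a_1>-\frac12$). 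The only cosmetic discrepancy is your mention of Lemma~\ref{lem:sums2} for the final one-dimensional integral, which the paper does not use (its last integral involves both $\la\lambda-\xi\ra$ and $\la\lambda-\xi^2\ra$, so it is handled by the two-case split just described rather than by that lemma), but this does not affect the correctness of your argument.
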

\begin{proof} We provide the proof only for the '-' sign in the integral. We consider
$$
 \left\| \int_{\R }  \frac{\la \lambda  \ra^{s_1+a_1 }  |\xi|}{ \la \lambda - \xi   \ra}  |\widehat u( \xi_1, \lambda_1) |   |\widehat u(\xi-\xi_1,\lambda-\lambda_1)  |  d\xi_1d\lambda_1 d\xi  \right\|_{L^2_\lambda}
$$
 
As in the proof of Proposition~\ref{prop:smooth12}, it suffices to prove that 
$$
\sup_\lambda \int_{\R^3 } \frac{\la \lambda  \ra^{2s_1+2a_1  } \la \lambda- \xi   \ra^{ -2  } \,\,  |\xi|^2 }{ \la \xi_1\ra^{2s_0} \la \xi-\xi_1\ra^{2s_0} \la \lambda_1- \xi_1^2\ra^{2b} \la \lambda-\lambda_1+(\xi-\xi_1)^2\ra^{2b} }   d\xi_1 d\lambda_1  d\xi  <\infty.
$$ 
Using Lemma~\ref{lem:sums} in the $\lambda_1$ integral, we bound the supremum by
\be\label{bothregions2}  \sup_\lambda \int_{\R^2 } \frac{\la \lambda  \ra^{2s_1+2a_1  } \la \lambda- \xi   \ra^{ -2  } \,\,  |\xi|^2 }{ \la \xi_1\ra^{2s_0} \la \xi-\xi_1\ra^{2s_0}   \la \lambda  +\xi  (\xi-2\xi_1) \ra^{1-} }   d\xi_1  d\xi.
\ee
By the change of variable $\rho:=\xi(\xi-2\xi_1)$ in the $\xi_1$ integral, we  bound the integral as
$$\sup_\lambda \int_{\R^2 } \frac{\la \lambda  \ra^{2s_1+2a_1  }\la \lambda- \xi   \ra^{ -2  } \,\,  |\xi|  }{ \la \xi-\frac\rho\xi\ra^{2s_0} \la \xi+\frac\rho\xi\ra^{2s_0}   \la \lambda  +\rho \ra^{1-} }   d\rho   d\xi.
$$
Without loss of generality $|\xi-\frac\rho\xi|\gtrsim |\xi|$, which leads to the bound 
\begin{multline*}
\sup_\lambda \int_{\R^2 } \frac{\la \lambda  \ra^{2s_1+2a_1  } \la \lambda- \xi   \ra^{ -2  }\,\,  |\xi| \la \xi\ra^{-2s_0} }{ \la \xi+\frac\rho\xi\ra^{2s_0}   \la \lambda  +\rho \ra^{1-} }   d\rho   d\xi  \\ \les \sup_\lambda \int_{\R^2 } \frac{\la \lambda  \ra^{2s_1+2a_1  } \la \lambda -\xi  \ra^{ -2  }\,\,  |\xi|^{1+\min(2s_0,1) } \la \xi\ra^{-2s_0} }{ |\xi^2+\rho|^{\min(2s_0,1)-}   \la \lambda  +\rho \ra^{1-} }   d\rho   d\xi \\
\les \sup_\lambda \int_{\R } \frac{\la \lambda  \ra^{2s_1+2a_1  } \la \lambda- \xi  \ra^{ -2  }\,\,  |\xi|^{1+\min(2s_0,1) } \la \xi\ra^{-2s_0} }{    \la \lambda -\xi^2 \ra^{\min(2s_0,1)-  }}     d\xi.
\end{multline*}
 We have two cases $\la\lambda- \xi \ra \gtrsim \la \xi\ra^2 $ or $ \la\lambda -\xi^2\ra \gtrsim \la \xi\ra^2 $. In the former case we estimate the integral by
 $$
  \int_{\R }   |\xi|^{1+\min(2s_0,1) } \la \xi\ra^{-2s_0 -4 }    d\xi \les 1. 
 $$ 
 In the latter case, we have the bound 
\begin{multline*}
 \sup_\lambda \int_{\R } \la \lambda  \ra^{2s_1+2a_1  } \la \lambda- \xi   \ra^{ -2  }\,\,  \la \xi\ra^{1-2s_0- \min(2s_0,1)+}   d\xi \\\les
 \sup_\lambda \int_{\R }  \la \lambda- \xi   \ra^{ -2-2s_1-2a_1  }\,\,  \la \xi\ra^{2s_1+2a_1+1-2s_0- \min(2s_0,1)+}   d\xi \les 1
 \end{multline*}
 since $-\frac12<s_1+a_1<0$ and $a_1< \min\left(s_0-s_1,2s_0-s_1-\frac12 \right)$.
\end{proof}

\section{Estimates for the nonlinear terms in the critical case}  \label{sec:critical}
In this section we consider the case $s_0,s_1$ admissible and $s_0=s_1+1>\frac12$.  
 \begin{proposition}\label{prop:critical} For any admissible $s_0, s_1$ satisfying $s_0=s_1+1\in (\frac12,\frac52)$,   we have
\begin{align} 
\label{crit1} \|F\|_{X^{s_0,-\frac12}} & \les T^{\frac12-} \|u\|_{X^{s_0,\frac12} }  \|n\|_{Y^{s_1,\frac12+} },  \\
\label{crit2}
\Big\| \int_{\R } \la \lambda+\xi^2 \ra^{\frac{2s_0-3}{4}}   | \widehat{F}(\xi,\lambda)| d\xi  \Big\|_{L^2_\lambda} & \les T^{0+}\|u\|_{X^{s_0,\frac12}}  \|n\|_{Y^{s_1,\frac12+}}, \\  
\label{crit3} \|nu\|_{Z^{s_0}} & \les \|u\|_{X^{s_0,\frac12}}  \|n\|_{Y^{s_1,\frac12+}},
\end{align}
where $F=\eta(t/T)nu$ and 
$$
\|f\|_{Z^{s_0}}=\left\|\widehat f (\xi,\tau) \la \xi\ra^{s_0}\la \tau+\xi^2\ra^{-1}\right\|_{L^2_\xi L^1_\tau}. 
$$
\end{proposition}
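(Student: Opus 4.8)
The three bounds are all bilinear Schr\"odinger--wave estimates at the endpoint regularity $s_0=s_1+1$, so I would treat them uniformly by the $L^2$-convolution scheme of Propositions~\ref{prop:smooth} and~\ref{prop:smooth12}, taking the Schr\"odinger output modulation at the endpoint exponent $\frac12$ and the inputs in $X^{s_0,\frac12}$ and $Y^{s_1,\frac12+}$. For \eqref{crit1} I would write $\widehat{nu}$ as a convolution, set $f=|\widehat n|\langle\xi\rangle^{s_1}\langle\tau-\xi\rangle^{\frac12+}$ and $g=|\widehat u|\langle\xi\rangle^{s_0}\langle\tau+\xi^2\rangle^{\frac12}$ (for the $Y_+$ piece; the $Y_-$ piece is identical), and reduce by Cauchy--Schwarz to the finiteness of $\sup_{\xi,\tau}\int M^2\,d\xi_1\,d\tau_1$ with
\[
M=\frac{\langle\xi\rangle^{s_0}\langle\xi_1\rangle^{-s_1}\langle\xi-\xi_1\rangle^{-s_0}}{\langle\tau+\xi^2\rangle^{\frac12}\langle\tau_1-\xi_1\rangle^{\frac12+}\langle\tau-\tau_1+(\xi-\xi_1)^2\rangle^{\frac12}}.
\]
Carrying out the $\tau_1$ integral with Lemma~\ref{lem:sums} leaves a one-dimensional integral weighted by $\langle\tau+\xi^2\rangle^{-1}\langle\tau+(\xi-\xi_1)^2-\xi_1\rangle^{-1}$, which one recognizes as precisely the borderline $a_0=0$ instance of Proposition~\ref{prop:smooth}, where the admissibility bound $a_0<s_1-s_0+1=0$ just fails logarithmically.

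\textbf{Resolving the borderline.} The heart of the matter is to show this borderline integral is nonetheless finite when $s_0=s_1+1$, and I would split it into two interaction regimes according to the size of the Schr\"odinger input frequency $\xi-\xi_1$. In the parallel/``vertex'' regime where $\xi-\xi_1$ is small compared with $\xi$, the $\tau$-free resonance $\xi^2+\xi_1-(\xi-\xi_1)^2$ is $\approx\langle\xi\rangle^2$, so the \emph{full} output weight $\langle\tau+\xi^2\rangle\gtrsim\langle\xi\rangle^2$ exactly absorbs the factor $\langle\xi\rangle^{2(s_0-s_1)}=\langle\xi\rangle^2$ produced by $M$, and the remaining $\xi_1$ integral converges because $s_0>\frac12$. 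In the complementary resonant regime I would use $\langle\tau+\xi^2\rangle\langle\tau+(\xi-\xi_1)^2-\xi_1\rangle\gtrsim\langle\xi_1-2\xi\rangle$ together with Lemma~\ref{lem:sums2}; here the extra $\frac12+$ derivative carried by $Y^{s_1,\frac12+}$ is exactly what upgrades the divergent $\langle\xi\rangle^{0+}$ of the subcritical computation to a convergent $\langle\xi\rangle^{0-}$. Estimate \eqref{crit2} is the $a_0=0$ case of Proposition~\ref{prop:smooth12}, with $\langle\lambda+\xi^2\rangle^{(2s_0-3)/4}$ in place of $\langle\xi\rangle^{s_0+a_0}$; the same two mechanisms (output modulation for the vertex, the wave $\frac12+$ for the resonant part) close it, now following the case split $\frac12<s_0<\frac32$ versus $\frac32\le s_0<\frac52$.

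\textbf{The $Z$-norm and the time factors.} For \eqref{crit3} the new ingredient is the $L^1_\tau$ norm in $Z^{s_0}$. I would first convert it to $L^2_\tau$ by Cauchy--Schwarz in $\tau$, splitting $\langle\tau+\xi^2\rangle^{-1}=\langle\tau+\xi^2\rangle^{-\frac12-\delta}\langle\tau+\xi^2\rangle^{-\frac12+\delta}$ and pulling out the $L^2_\tau$-integrable factor $\langle\tau+\xi^2\rangle^{-\frac12-\delta}$; because the $Z$-norm already carries the strong weight $\langle\tau+\xi^2\rangle^{-1}$, this costs nothing and reduces \eqref{crit3} to the same $X^{s_0,-\frac12}$-type multiplier bound established for \eqref{crit1}. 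The positive powers $T^{\frac12-}$ and $T^{0+}$ reflect the truncation $\eta(t/T)$ present in $F=\eta(t/T)nu$; I would recover them from the time localization, trading a small amount of modulation regularity through \eqref{eq:xs3} (in its endpoint form), as in \cite{GTV}.

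\textbf{Expected obstacle.} The decisive difficulty is the endpoint bilinear bound itself. At $s_0=s_1+1$ the naive estimate diverges logarithmically, and one must simultaneously (i) keep the Schr\"odinger output modulation at the full exponent $\frac12$ in order to control the parallel interactions --- so there is no room at all to lower that exponent --- and (ii) spend the entire extra $\frac12+$ of the wave norm precisely on the high-high resonant interactions. Reconciling this saturated endpoint estimate with the production of a \emph{genuinely positive} power of $T$ (needed to close the contraction) is the subtle point; my plan would be to isolate the endpoint multiplier bound first, verifying the vertex and resonant regions separately, and only then recover the $T$-gain from the time cutoff.
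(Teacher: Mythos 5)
Your multiplier analysis is essentially sound: after the $\tau_1$ integration it lands on the same reduced integral
$$
\sup_{\xi,\tau}\int \frac{\langle\xi\rangle^{2s_0}\langle\xi_1\rangle^{-2s_1}\langle\xi-\xi_1\rangle^{-2s_0}}{\langle\xi^2+\xi_1-(\xi-\xi_1)^2\rangle}\,d\xi_1
$$
that the paper estimates, and your treatment of \eqref{crit2} (input modulation lowered to $\tfrac12-$, gain $T^{0+}$ via \eqref{eq:xs3}) matches the paper. But there are two genuine gaps.

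First, for \eqref{crit1} your plan --- prove the saturated endpoint bound $\|nu\|_{X^{s_0,-\frac12}}\les\|u\|_{X^{s_0,\frac12}}\|n\|_{Y^{s_1,\frac12+}}$ first and ``only then recover the $T$-gain from the time cutoff'' --- cannot be completed. Once both Schr\"odinger exponents sit at $\pm\frac12$, \eqref{eq:xs3} produces no positive power of $T$: at equal $b$-indices the cutoff costs $T^{0-}$, and lowering the output exponent to $-\frac12+\delta$ to create room yields a bilinear estimate that is false at $s_0=s_1+1$ (see the counterexample below). The missing idea is the asymmetric dual form used in the paper, following \cite{GTV}: since only the \emph{larger} of the two Schr\"odinger modulations is ever needed to dominate the resonance, one proves
$$
\Big|\int unh\,dx\,dt\Big|\les \|n\|_{Y^{s_1,\frac12+}}\Big(\|u\|_{X^{s_0,\frac12}}\|h\|_{X^{-s_0,0}}+\|u\|_{X^{s_0,0}}\|h\|_{X^{-s_0,\frac12}}\Big),
$$
where in each term one Schr\"odinger factor carries modulation exponent $0$; applying this to $\eta(t/T)u$ and $\eta(t/2T)h$, the zero-modulation factor absorbs the cutoff with the full gain $T^{\frac12-}$ via \eqref{eq:xs3}, and \eqref{crit1} follows by duality. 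Without some such splitting, the $T^{\frac12-}$ in \eqref{crit1} --- which is what closes the contraction in the critical case --- is out of reach.

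Second, your reduction of \eqref{crit3} is lossy in a way that breaks it. Writing $\langle\tau+\xi^2\rangle^{-1}=\langle\tau+\xi^2\rangle^{-\frac12-\delta}\langle\tau+\xi^2\rangle^{-\frac12+\delta}$ and applying Cauchy--Schwarz in $\tau$ reduces $\|nu\|_{Z^{s_0}}$ to $\|nu\|_{X^{s_0,-\frac12+\delta}}$, which is \emph{strictly stronger} than \eqref{crit1}, not ``the same,'' and it fails at critical regularity: take $\widehat u$ concentrated on $\{|\xi'|\les 1,\ |\tau'+\xi'^2|\les 1\}$ and $\widehat n$ on $\{\xi_1\approx N,\ |\tau_1-\xi_1|\les 1\}$; the output is then forced to have modulation $\langle\tau+\xi^2\rangle\approx N^2$, and the ratio of left to right side grows like $N^{s_0-s_1-1+2\delta}=N^{2\delta}$. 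This is precisely why the $Z$-norm cannot be collapsed to an $X^{s,b}$ norm. The paper instead keeps the $L^1_\tau$ structure and performs Cauchy--Schwarz jointly in $(\tau,\tau_1,\xi_1)$, so the output weight enters squared as $\langle\tau+\xi^2\rangle^{-2}$; integrating first in $\tau$ and then in $\tau_1$ (both by Lemma~\ref{lem:sums}) retains the full power one on the resonance function and reduces \eqref{crit3} to the same convergent $\xi_1$-integral as \eqref{crit1}.
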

The first two are enough to run the local theory without the continuity of the Schr\"odinger part of \eqref{eq:duhamel}; the third inequality gives the continuity.

\begin{proof}
We start with the proof of  \eqref{crit2}. Using \eqref{eq:xs3} and remark following it, it suffices to prove that 
$$
\Big\| \int_{\R } \la \lambda+\xi^2 \ra^{\frac{2s_0-3}{4}}   | \widehat{F}(\xi,\lambda)| d\xi  \Big\|_{L^2_\lambda}  \les  \| \eta(t/T)u\|_{X^{s_0,\frac12-}}  \|n\|_{Y^{s_1,\frac12+}}.
$$
Denoting  $\eta(t/T) u$ by $u$, as in the proof of Proposition~\ref{prop:smooth12},   it suffices to prove that 
$$
\sup_\tau \int_{\R^2 } \frac{\la \tau+\xi^2 \ra^{ s_0 -\frac32}  }{ \la \xi_1\ra^{2s_1} \la \xi-\xi_1\ra^{2s_0}   \la \tau- \xi_1 +(\xi-\xi_1)^2\ra^{1-} }   d\xi_1   d\xi<\infty.
$$
Consider the case $\frac12<s_0\leq \frac32$. It is easy to see the contribution of the resonant cases $|\xi_1|\les 1$ or $|\xi_1-2\xi|\les 1$  is bounded in $\tau$.
In the complement of this set we have
$$
\max(\la \tau+\xi^2 \ra, \la \tau- \xi_1 +(\xi-\xi_1)^2\ra)\gtrsim \la \xi_1\ra \la \xi_1-2\xi\ra.
$$
Therefore, we can bound the integral by 
\begin{multline*}
\int_{\R^2 } \frac{1  }{ \la \xi_1\ra^{2s_1+\frac32-s_0} \la \xi-\xi_1\ra^{2s_0}  \la \xi_1-2\xi\ra^{\frac32-s_0} \la \tau- \xi_1 +(\xi-\xi_1)^2\ra^{1-}  }   d\xi_1   d\xi \\ + \int_{\R^2 } \frac{1  }{ \la \xi_1\ra^{2s_1+1-} \la \xi-\xi_1\ra^{2s_0}  \la \xi_1-2\xi\ra^{1-} \la \tau+\xi^2 \ra^{\frac32- s_0}  }   d\xi_1   d\xi\\
\leq \int_{\R^2 } \frac{1  }{ \la \xi_1\ra^{2s_1+\frac32-s_0} \la \xi-\xi_1\ra^{2s_0}   \la \tau- \xi_1 +(\xi-\xi_1)^2\ra^{1-}  }   d\xi_1   d\xi 
\\ + \int_{\R^2 } \frac{1  }{ \la \xi_1\ra^{2s_1+1-} \la \xi-\xi_1\ra^{2s_0}  \la \xi_1-2\xi\ra^{1-}   }   d\xi_1   d\xi.
\end{multline*}
The second integral is finite by applying Lemma~\ref{lem:sums} to $\xi$ integral. We apply the change of variable $\eta=(\xi-\xi_1)^2$ in the first integral to obtain
$$
 \int_{\R^2 } \frac{1  }{ \la \xi_1\ra^{2s_1+\frac32-s_0} \la \eta \ra^{s_0} \sqrt{\eta}  \,  \la \tau- \xi_1 +\eta \ra^{1-}  }  d\eta  d\xi_1.
$$
Applying Lemma~\ref{lem:sums} in the $\eta$ integral, we bound this integral by
$$
 \int_{\R } \frac{1  }{ \la \xi_1\ra^{2s_1+\frac32-s_0}    \,  \la \tau- \xi_1  \ra^{1-}  }   d\xi_1 \les 1,
$$
since $2s_1+\frac32-s_0=s_1+\frac12>0$.

In the case $\frac32<s_0< \frac52$, using the inequality 
$$
\la \tau+\xi^2\ra \les \la \tau- \xi_1 +(\xi-\xi_1)^2\ra   \la \xi_1\ra \la \xi_1-2\xi\ra \les \la \tau- \xi_1 +(\xi-\xi_1)^2\ra   \la \xi_1\ra^2 \la \xi_1-\xi\ra,
$$
we bound the integral by
$$
  \int_{\R^2 } \frac{1 }{ \la \xi_1\ra  \la \xi-\xi_1\ra^{ s_0+\frac32}   \la \tau- \xi_1 +(\xi-\xi_1)^2\ra^{\frac52-s_0-}  }   d\xi_1   d\xi, 
$$
which is finite by the argument above using the change of variable $\eta=(\xi-\xi_1)^2$.

We now prove \eqref{crit1}. We first prove the following inequality 
$$
\Big|\int unh \ dx dt\Big|\les \|n\|_{Y^{s_1,\frac12+}} \left(\|u\|_{X^{s_0,\frac12}} \|h\|_{X^{-s_0,0}} + \|u\|_{X^{s_0,0}} \|h\|_{X^{-s_0,\frac12}}\right).
$$
Applying this to $n, \eta(t/T)u, $ and $\eta(t/2T)h$, then using the remark following \eqref{eq:xs3} we get  \eqref{crit1} by duality.

Noting that $\max(\langle \tau +\xi ^2 \rangle, \langle \tau - \tau_1   + (\xi - \xi_1  )^2 \rangle)\gtrsim \la \tau_1+\xi^2-(\xi - \xi_1  )^2\ra,$ it suffices to prove that
$$
\sup_{\xi,\tau} \int_{\R^2} \frac{ \langle\xi\rangle^{2s_0} \langle \xi_1 \rangle^{-2s_1}  \langle \xi -\xi_1 \rangle ^{-2s_0}}{  \langle \tau_1 -  \xi_1  \rangle ^{1+} \la \tau_1+\xi^2-(\xi - \xi_1  )^2\ra }  d\xi_1 d\tau_1 <\infty.
$$
Using Lemma~\ref{lem:sums} in the $\tau_1$ integral, we bound the supremum above by
\be\label{tempcrit}
\sup_{\xi } \int \frac{ \langle\xi\rangle^{2s_0} \langle \xi_1 \rangle^{-2s_1}  \langle \xi -\xi_1 \rangle ^{-2s_0}}{  \langle \xi^2+  \xi_1   - (\xi - \xi_1  )^2  \rangle }  d\xi_1=\sup_\xi \int \frac{ \langle\xi\rangle^{2s_0} \langle \xi_1 \rangle^{-2s_1}  \langle \xi -\xi_1 \rangle ^{-2s_0}}{  \langle \xi_1(\xi_1-2\xi)  \rangle }  d\xi_1.
\ee
In the equality we used the change of variable $\xi\to\xi- \frac12 $ as before. Note that we can restrict ourselves to the case $|\xi|\gg 1$, and that 
  the integral on $|\xi_1|<1$ is bounded in $\xi$. 
  
  Consider case $|\xi_1-2\xi|<1$. Letting $\rho=\xi_1(\xi_1-2\xi)$, we bound the integral on the set $|\xi_1-2\xi|<1$ by
$$
 \int \frac{ \langle\xi\rangle^{2s_0-2s_1-1} }{ \langle \rho \rangle   \langle \rho+\xi^2\rangle ^{ s_0}  }  d\rho =  \int \frac{ \langle\xi\rangle  }{ \langle \rho \rangle   \langle \rho+\xi^2\rangle ^{ s_0}  }  d\rho\les 1. 
$$
In the last inequality we used Lemma~\ref{lem:sums} and $s_0>\frac12$.

In the case $|\xi_1-2\xi|>1$, we bound the integral by 
$$
\int \frac{ \langle\xi\rangle^{2s_0} }{ \langle \xi_1 \rangle^{1+2s_1}  \langle \xi -\xi_1 \rangle ^{ 2s_0} \langle  \xi_1-2\xi \rangle }  d\xi_1 \les 1,
$$
by Lemma~\ref{lem:sums2}.

Finally, we prove \eqref{crit3}. By Cauchy-Schwarz inequality as before, it suffices to prove that
$$
\sup_\xi \int_{\R^3} \frac{ \langle\xi\rangle^{2s_0} \langle \xi_1 \rangle^{-2s_1}  \langle \xi -\xi_1 \rangle ^{-2s_0}}{\langle \tau +\xi^2 \rangle^2  \langle \tau_1 -  \xi_1  \rangle ^{1+}  \langle \tau - \tau_1   + (\xi - \xi_1  )^2  \rangle }  d\xi_1 d\tau_1 d\tau.
$$
By integrating in $\tau$ and $\tau_1$ using Lemma~\ref{lem:sums}, we bound this by
$$
\sup_\xi \int_{\R } \frac{ \langle\xi\rangle^{2s_0} \langle \xi_1 \rangle^{-2s_1}  \langle \xi -\xi_1 \rangle ^{-2s_0}}{  \langle -\xi^2 -   \xi_1   + (\xi - \xi_1  )^2  \rangle }  d\xi_1,
$$
which was handled before; see \eqref{tempcrit}. 
\end{proof}

\section{Local theory: The proof of  Theorem~\ref{thm:local}} \label{sec:prt3}

We first prove that the map $\Gamma=(\Gamma_1,\Gamma_2)$,
\begin{equation} \label{gamma}\begin{array}{l}
\Gamma_1(u,n)(t)=\eta(t) W_0^t\big(g_e, h  \big) -i \eta(t) \int_0^t e^{i(t- t^\prime)\partial_{xx}}   F(u,n) \,d t^\prime +i\eta(t) W_0^t\big(0,  q  \big), \\
\Gamma_2(u,n)(t)= \eta(t) V_0^t\big(\psi^{\pm}, f\big) + \frac12\eta(t) (n_+ +  n_-) - \frac12\eta(t) V_0^t(0,z),  
\end{array}
\end{equation}
has a fixed point in $X^{s_0,b}(\R)\times Y^{s_1,b}(\R)$. Here  $s_0\in(0,\frac52)$, $s_1\in(0,\frac32)$   satisfy  $$ 0<s_0-s_1< 1, \,\,\,\,2s_0>s_1+\frac12>0.$$    We choose $b<\frac12$ sufficiently close to $\frac12$. The critical case $s_0=s_1+1$ will be discussed later. Finally,
$F,q,n_\pm,z$ are given in equations \eqref{def:F}--\eqref{def:z}.

To see that $\Gamma$ is bounded in $X^{s_0,b}\times Y^{s_1,b}$ recall the following bounds:
Combining \eqref{eq:xs2}, \eqref{eq:xs3}, and Proposition~\ref{prop:smooth}, we obtain
\begin{multline*}
  \left\| \eta(t) \int_0^t e^{i(t- t^\prime)\partial_{xx}}   F(u,n) \,d t^\prime\right\|_{X^{s_0,b}} \les \|F(u,n)\|_{X^{s_0,-\frac12+}} \\
  \les T^{\frac12-b-} \|un\|_{X^{s_0,-b}}\les T^{\frac12-b-} \|u\|_{X^{s_0,b}} \|n\|_{Y^{s_1,b}}.
\end{multline*}
To obtain the analogous bound for the linear wave part first note that  
$$
\eta(t) W_0^t\big(g_e, h  \big)+i\eta(t) W_0^t\big(0,  q  \big)= \eta(t)e^{it\partial_{xx}} g_e+\eta(t) W_0^t\big( 0, h-p+iq \big). 
$$
By \eqref{eq:xs1}, we have
$$
\|\eta(t) e^{it\partial_{xx}} g_e\|_{X^{s_0,b}} \les \|g_e\|_{H^{s_0}}\les \|g\|_{H^{s_0} (\R^+)}.
$$
Using Proposition~\ref{prop:wbh} and Lemma~\ref{lem:Hs0} (noting that the compatibility condition holds) we have
\begin{multline} \label{eq:temp1}
 \| \eta(t) W_0^t\big( 0, h-p+iq \big)(t)\|_{X^{s_0,b}}
\les \|(h-p+iq)\chi_{(0,\infty)} \|_{H^{\frac{2s_0+1}{4}}_t(\R)} \\ \les \|h-p\|_{H^{\frac{2s_0+1}{4}}_t(\R^+)} + \|q\|_{H^{\frac{2s_0+1}{4}}_t(\R^+)}
\les \|h \|_{H^{\frac{2s_0+1}{4}}_t(\R^+)} +\|p\|_{H^{\frac{2s_0+1}{4}}_t(\R )}+\|q\|_{H^{\frac{2s_0+1}{4}}_t(\R )}.
\end{multline}
By Kato smoothing Lemma~\ref{lem:kato}, we have
$$
\|p\|_{H^{\frac{2s_0+1}{4}}_t(\R )} \les \|g\|_{H^{s_0}(\R^+)}.
$$
By combining  Proposition~\ref{prop:duhamelkato}, \eqref{eq:xs3},  Proposition~\ref{prop:smooth} and  Proposition~\ref{prop:smooth12} we have
$$
\|q\|_{H^{\frac{2s_0+1}{4}}_t(\R )} 
\les T^{ \frac12-b-} \| u\|_{X^{s_0,b}} \|n\|_{Y^{s_1,b}}.
$$
Combining these estimates, we obtain
$$
\|\Gamma_1 (u,n)\|_{X^{s_0,b}}\les \|g\|_{H^{s_0}(\R^+)}+ \|h \|_{H^{\frac{2s_0+1}{4}}_t(\R^+)} + T^{ \frac12-b-} \| u\|_{X^{s_0,b}} \|n\|_{Y^{s_1,b}}.
$$
Similarly, using the analogs of  \eqref{eq:xs1},  \eqref{eq:xs2}, \eqref{eq:xs3} for $Y^{s_1,b}_\pm$ norms, and Proposition~\ref{prop:smooth2} we obtain
$$
\|\eta(t) (n_+ +  n_-)\|_{Y^{s_1,b}}\leq \|\eta(t)  n_+  \|_{Y^{s_1,b}_+}+\|\eta(t)  n_-  \|_{Y^{s_1,b}_-}  
\les T^{\frac12-b-} \|u\|_{X^{s_0,b}}^2.
$$ 
For the linear part, first note that
$$
\eta(t)V_0^t(\psi^\pm, f)-\frac12 \eta(t)V_0^t(0, z)=   \frac12 \eta(t) \left[e^{  t \partial_x} \psi_{+}+ e^{- t \partial_x} \psi_{-}\right] + \eta(t) V_0^t(0, f-r- z/2).
$$
Using   Proposition~\ref{prop:wbhW},  and Lemma~\ref{lem:Hs0} (noting that the compatibility condition holds) we have
\begin{multline} \label{eq:temp12}
 \| \eta(t) V_0^t\big( 0,  f-r- z/2 \big)(t)\|_{Y^{s_1,b}}
\les \|(f-r-z/2)\chi_{(0,\infty)} \|_{H^{s_1}_t(\R)} \\ \les \|f-r\|_{H^{s_1}_t(\R^+)} + \|z\|_{H^{s_1}_t(\R^+)}
\les \|f \|_{H^{s_1}_t(\R^+)} +\|r\|_{H^{s_1}_t(\R )}+\|z\|_{H^{s_1}_t(\R )}.
\end{multline}
By Kato smoothing Lemma~\ref{lem:katoW}, we have
$$
\|r\|_{H^{s_1}_t(\R )} \les \|\psi_+\|_{H^{s_1}_t(\R )}+\|\psi_-\|_{H^{s_1}_t(\R )}\les  \|n_0\|_{H^{s_1}(\R^+)} +\|n_1\|_{\hat H^{s_1-1}(\R^+)}.
$$
Finally, by combining  Proposition~\ref{prop:duhamelkatoW}, \eqref{eq:xs3},  Proposition~\ref{prop:smooth2}, Proposition~\ref{prop:smooth22}, and Proposition~\ref{prop:smooth23} we have
$$
\|z\|_{H^{s_1}_t(\R )} 
\les T^{ \frac12-b-} \| u\|_{X^{s_0,b}}^2.
$$
Combining these estimates, and \eqref{eq:xs1},  we obtain
$$
\|\Gamma_2 (u,n)\|_{Y^{s_1,b}}\les \|f \|_{H^{s_1}_t(\R^+)}+ \|n_0\|_{H^{s_1}(\R^+)} +\|n_1\|_{\hat H^{s_1-1}(\R^+)} + T^{ \frac12-b-} \| u\|_{X^{s_0,b}}^2.
$$

The estimates on the differences are similar. We thus obtain  the existence of a fixed point $(u,n)$ in $X^{s_0,b}\times Y^{s_1,b}$.
Now we prove that $u\in C^0_t H^{s_0}_x([0,T)\times \R)$. Note that continuity of the  first and the third term in the definition of $\Gamma_1$  is continuous in $H^{s_0}$ by Lemma~\ref{lem:wbcont}  and \eqref{eq:temp1}. For the second term it follows from the embedding  $X^{s_0,\frac12+}\subset C^0_tH^{s_0}_x$
 and \eqref{eq:xs2} together with Proposition~\ref{prop:smooth}. The fact that  $u\in C^0_x H^{\frac{2s_0+1}{4}}_t(\R\times [0,T])$ follows similarly from Lemma~\ref{lem:kato}, Proposition~\ref{prop:duhamelkato}, and Lemma~\ref{lem:wbcont}. The analogous statements for $n$ are proved similarly. 
The continuous dependence on the initial and boundary data follows from the fixed point argument and the linear and nonlinear estimates as above.

In the critical case $s_0=s_1+1$, for the fixed point argument, one needs to consider the space $X^{s_0,\frac12}\times Y^{s_1,\frac12+}$. The required bounds are provided in Section~\ref{sec:critical}. The continuity of the Schr\"odinger part of the solution uses in addition the bound we have for $Z^{s_1}$ norm in Proposition~\ref{prop:critical}. For more details see \cite{Bou2,GTV}.

%The uniqueness issue is discussed in Section~\ref{sec:unique} and Section~\ref{sec:uniquerough} below. 

\subsection{Uniqueness of smooth solutions}\label{sec:unique} In this section we discuss the uniqueness of solutions of \eqref{eq:zakharov}.
%, also see \cite{holmer, bonaetal}. 
The solution we constructed above is the unique fixed point of \eqref{gamma}. However, it is not a priori clear if different extensions of initial data produce  the same solution on $\R^+$.  We start with a proof of uniqueness in the case $s_0=2$, $s_1=1$. It is clear that the uniqueness of smoother solutions follows from this. The uniqueness of rougher solutions (under additional assumptions) will follow from an approximation argument.

Let $(u_1,n_1,\nu_1)$, $ (u_2,n_2,\nu_2)$, where $\nu_j=\partial_x^{-1} n_t$, $j=1,2$,  be two local solutions constructed as above starting from the same initial data but possibly with different extensions to $\R$. The following energy calculations are variants of the ones obtained in \cite{ss,aa2,GZZ} 

Let 
$$
u=u_1-u_2,\,\,\,n=n_1-n_2,\,\,\,\nu=\nu_1-\nu_2.
$$
Note that $u,n,\nu$ satisfies the following system:
\begin{equation}\label{eq:zak_unique}
\left\{
\begin{array}{l}
iu_{t}+  u_{xx} =n_1u_1-n_2u_2, \,\,\,\,  x \in { \R}^+=(0,\infty), \,\,\,\,  t\in \R^+,\\
n_t=\nu_x,\\
\nu_{t}=n_{x}+(|u_1|^2-|u_2|^2)_{x}, \\
u(x,0)=n(x,0)=\nu(x,0)=\nu_x(x,0)=0, \\
u(0,t)=n(0,t)=\nu_x(0,t)=0.
\end{array}
\right.
\end{equation}
Let 
$$
L(t):= \|u_x\|^2_{L^2(\R^+)} +\frac12\| u\|_{L^2(\R^+)}^2+\frac12 \|n\|_{L^2(\R^+)}^2+\frac12\|\nu\|_{L^2(\R^+)}^2+ \int_0^\infty n(|u_1|^2-|u_2|^2).
$$ 
We have 
\begin{multline*}
\partial_t\left(\frac12\| u\|_{L^2}^2+\frac12 \|n\|_{L^2}^2+\frac12\|\nu\|_{L^2}^2\right) = \Re \int_0^\infty\overline u u_t +\int_0^\infty nn_t+\int_0^\infty \nu \nu_t\\
=\Im  \int_0^\infty\overline u (n_1u_1-n_2u_2)+\int_0^\infty nn_t+\int_0^\infty \nu \big[(n_x+(|u_1|^2-|u_2|^2)_x\big]\\
=\Im  \int_0^\infty\overline u n  u_2 -\int_0^\infty \nu_x  (|u_1|^2-|u_2|^2). 
\end{multline*}
The last equality follows from integration by parts. Similarly, we have
$$
\partial_t \| u_x\|_{L^2}^2  = -2\Im   \int_0^\infty\overline u_{xx} (n_1u_1-n_2u_2).
$$
Also note that
\begin{multline*}
\partial_t\int_0^\infty n(|u_1|^2-|u_2|^2) =\int_0^\infty n_t(|u_1|^2-|u_2|^2) +2\Re \int_0^\infty n(\overline{u_1}u_{1t}-\overline{u_2}u_{2t}) \\
=\int_0^\infty \nu_x(|u_1|^2-|u_2|^2) -2 \Im \int_0^\infty n(\overline{u_1}u_{1xx}-\overline{u_2}u_{2xx})\\
=\int_0^\infty \nu_x(|u_1|^2-|u_2|^2) +2 \Im \int_0^\infty n u_1 \overline{u}_{xx} +2 \Im \int_0^\infty n u  \overline{u_2}_{xx}.
\end{multline*}
In the last identity we added and subtracted $n\overline{u_1}u_{2xx}$ from the integrand and took complex conjugates.
Combining these identities, and then integrating by parts, we obtain
\begin{multline*}
\frac{dL}{dt}= \Im  \int_0^\infty\overline u  n u_2 +2\Im   \int_0^\infty\overline u_{xx}  n_2u+2 \Im \int_0^\infty n u  \overline{u_2}_{xx}\\
= \Im  \int_0^\infty\overline u  n u_2 -2\Im   \int_0^\infty\overline u_x  n_{2x}u+2 \Im \int_0^\infty n u  \overline{u_2}_{xx} 
\les \|u\|_{H^1}^2+\|n\|_{L^2}^2.
\end{multline*}
Here the implicit constant depend on $H^2\times H^1$ norm of the local solutions.

We now estimate the mixed term in $L$ directly. 
First note that
$$
\partial_t\big(|u_1|^2-|u_2|^2\big)=-2\Im \left(  \overline u_1 u_{1xx} -\overline u_2 u_{2xx} \right)=-2\Im \partial_x\left(\overline u_1 u_{1x} -\overline u_2 u_{2x} \right).
$$ 
Therefore 
\begin{multline*}
\partial_t \big\||u_1|^2-|u_2|^2\big\|_2^2= -2\Im \int_0^\infty \big(|u_1|^2-|u_2|^2\big)\partial_x\left(\overline u_1 u_{1x} -\overline u_2 u_{2x} \right)\\
= 2\Im \int_0^\infty  \left(\overline u_1 u_{1x} -\overline u_2 u_{2x} \right)^2 = 2\Im \int_0^\infty  \left(\overline u  u_{1x} +\overline u_2 u_{ x} \right)^2. 
\end{multline*}
By Cauchy-Schwarz and integrating in time, this implies that 
$$
\big\||u_1|^2-|u_2|^2\big\|_2\les \sqrt{\int_0^t \|u(t^\prime)\|_{H^1}^2 dt^\prime}.
$$
Therefore
$$
\left|\int_0^\infty n(|u_1|^2-|u_2|^2)\right|\leq\|n\|_2 \big\||u_1|^2-|u_2|^2\big\|_2 \les \|n\|_2 \sqrt{\int_0^t \|u(t^\prime)\|_{H^1}^2 dt^\prime}.
$$
Let 
$$\alpha(t)=\| u\|_{H^1}^2+ \|n\|_{L^2}^2+ \|\nu\|_{L^2}^2.$$
The calculations above (noting that $L(0)=\alpha(0)=0$) imply that
$$
\alpha(t)\les \int_0^t\alpha(t^\prime) dt^\prime + \sqrt{\alpha(t)} \sqrt{ \int_0^t \alpha(t^\prime)} dt^\prime. 
$$
Therefore $\alpha(t)=0$ in the local existence interval, which implies uniqueness in $H^2\times H^1\times \hat H^0$.

\section{Global wellposedness}
In this section we  obtain energy identities that imply a priori bounds for   $H^1\times L^2\times \hat H^{-1} (\R^+)$ norm of the solutions of \eqref{eq:zakharov} when $h=0$. Global wellposedness of smoother solutions (when $h=0$) and polynomial bounds on the Sobolev norms follows from this by
the smoothing estimates we obtained in the previous section, see \cite{et3, et4}. 

We define $\nu\in L^2(\R^+)$ satisfying
$$
n_t=\nu_x,\,\,\,\,\,\,\,\nu_t=(n+|u|^2)_x.
$$
Let 
$$
E(t):= \|u_x\|^2_{L^2}+ \int_0^\infty n|u|^2 dx +\frac12 \|n\|_{L^2}^2+\frac12\|\nu\|_{L^2}^2.
$$

Note that by Gagliardo--Nirenberg inequality 
\be\label{enerbound}
 \|u_x\|^2_{L^2}+ \|n\|_{L^2}^2+ \|\nu\|_{L^2}^2 \les E(t)+\|u_x\|_{L^2} \|u\|_{L^2}^3,
\ee
and 
\be\label{enerbound2}
 E(t)  \les \|u_x\|^2_{L^2}+ \|n\|_{L^2}^2+ \|\nu\|_{L^2}^2+\|u_x\|_{L^2} \|u\|_{L^2}^3.
\ee

\begin{proposition}\label{prop:energy}
We have the following a priori estimates for the solutions of \eqref{eq:zakharov} when $h=0$:
$$
\|u\|_{H^1}+\|n\|_{L^2}+\|n_t\|_{\hat H^{-1}} \leq C\left(\|g\|_{H^1},\|n_0\|_{L^2},\|n_1\|_{\hat H^{-1}}, \|f\|_{L^2}\right).
$$
\end{proposition}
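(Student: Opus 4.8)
The plan is to run a boundary-corrected energy argument, exactly as one does on $\R$ or $\T$, and to show that the only new contribution is a boundary flux controlled by the data. First I would record conservation of mass: differentiating $\|u\|_{L^2(\R^+)}^2$ and substituting $iu_t+u_{xx}=nu$, the only boundary term is $2\Im(\bar u u_x)|_{x=0}$, which vanishes because $u(0,t)=h(t)=0$; hence $\|u(t)\|_{L^2(\R^+)}=\|g\|_{L^2(\R^+)}$ for all $t$. This is what lets me tame the indefinite term $\int_0^\infty n|u|^2$: together with \eqref{enerbound}--\eqref{enerbound2}, control of $E(t)$ and of the (now constant) $\|u\|_{L^2}$ is equivalent to control of $\|u_x\|_{L^2}^2+\|n\|_{L^2}^2+\|\nu\|_{L^2}^2$, since Young's inequality absorbs the $\|u_x\|_{L^2}\|u\|_{L^2}^3$ term.

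Next I would differentiate $E(t)$, insert all three equations of \eqref{eq:zakharov}, and integrate by parts. The key point is that $h=0$, i.e. $u(0,t)=0$, also forces $u_t(0,t)=0$, $|u(0,t)|^2=0$ and $(|u|^2)_x(0,t)=2\Re(\bar u u_x)(0,t)=0$, so every boundary term carrying a factor of $u$ is annihilated. After the interior integrals cancel in pairs, just as in the conservative case, the computation collapses to the clean flux identity
\[
\frac{dE}{dt}=-f(t)\,\nu(0,t),
\]
so that $E(t)=E(0)-\int_0^t f(s)\nu(0,s)\,ds$.

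The hard part will be to bound $\int_0^t f\,\nu(0,\cdot)$ by the data, since $\nu(0,t)$ is a trace of an unknown rather than prescribed data. To handle it I would pass to the Riemann invariants $R=n+\nu$ and $L=n-\nu$, which solve the transport equations $R_t-R_x=(|u|^2)_x$ and $L_t+L_x=-(|u|^2)_x$; here $R$ is outgoing and $L$ incoming, and the Dirichlet datum $n(0,t)=f$ gives $L(0,t)=2f(t)-R(0,t)$, whence $\nu(0,t)=R(0,t)-f(t)$ and $\tfrac{dE}{dt}=f(t)^2-f(t)R(0,t)$. The term $\int_0^t f^2\les\|f\|_{L^2(\R^+)}^2$ is harmless. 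For the outgoing trace I would multiply the $R$-equation by $R$ and integrate over $\R^+$, where the boundary term $-\tfrac12 R(0,t)^2$ has the favorable (dissipative) sign, yielding
\[
\tfrac12\|R(\cdot,t)\|_{L^2}^2+\tfrac12\int_0^t R(0,s)^2\,ds=\tfrac12\|R(\cdot,0)\|_{L^2}^2+\int_0^t\!\!\int_0^\infty R\,(|u|^2)_x\,dx\,ds,
\]
with $\|R(\cdot,0)\|_{L^2}\les\|n_0\|_{L^2}+\|n_1\|_{\hat H^{-1}}$ (using $\nu(\cdot,0)=\partial_x^{-1}n_1$).

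The genuine obstacle I expect is the quadratic source on the right: it couples the outgoing wave trace back to the Schr\"odinger field, so the estimate cannot be closed for the wave part in isolation. I would control it by Cauchy--Schwarz together with $\|(|u|^2)_x\|_{L^2}\les\|u\|_{L^\infty}\|u_x\|_{L^2}\les\|u\|_{H^1}^2$, the conserved mass, and the energy equivalence, closing a Gronwall inequality for $\|u_x\|_{L^2}^2+\|n\|_{L^2}^2+\|\nu\|_{L^2}^2$. Feeding the resulting control of $\|R(0,\cdot)\|_{L^2_t}$ back into $E(t)=E(0)+\int_0^t f^2-\int_0^t fR(0,\cdot)$ via Cauchy--Schwarz in $t$ then bounds $E(t)$, and hence $\|u\|_{H^1}+\|n\|_{L^2}+\|\nu\|_{L^2}$, by a constant depending only on $\|g\|_{H^1},\|n_0\|_{L^2},\|n_1\|_{\hat H^{-1}}$ and $\|f\|_{L^2}$; finally $\|n_t\|_{\hat H^{-1}}=\|\nu\|_{L^2}$ since $n_t=\nu_x$, which gives the claim.
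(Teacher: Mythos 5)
Your first two steps coincide exactly with the paper's proof: mass conservation from \eqref{eq:m} with $h=0$, the reduction via \eqref{enerbound}--\eqref{enerbound2}, and the flux identity $E(t)-E(0)=-\int_0^t f(s)\nu(0,s)\,ds$ from \eqref{eq:e}. The divergence, and the gap, is in how you control the boundary trace. The paper bounds $J=\int_0^t|\nu(0,s)|^2ds$ by integrating the \emph{spatial}-flux identity \eqref{eq:l}, $\partial_x\big(|u_x|^2+n^2+\nu^2\big)=2[\nu n]_t-2i\big[(u\overline{u_x})_t-(u\overline{u_t})_x\big]$, over $[0,\infty)\times[0,t]$: the right-hand side consists of exact time derivatives, so $J$ is bounded by bulk integrals evaluated only at times $0$ and $t$, giving $J\les E(t)+E(0)+1$ and hence the self-improving inequality $E(t)-E(0)\les\sqrt{E(t)+E(0)+1}$, i.e.\ $E(t)\les E(0)+1$ \emph{uniformly in $t$}. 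Your Riemann-invariant estimate instead produces the source term $\int_0^t\int_0^\infty R\,(|u|^2)_x\,dx\,ds$, which is a \emph{time-integrated} bulk quantity. After Cauchy--Schwarz and mass conservation this is $\les\int_0^t\alpha(s)^{5/4}ds$ with $\alpha=\|u_x\|_{L^2}^2+\|n\|_{L^2}^2+\|\nu\|_{L^2}^2$, and the resulting inequality
\begin{equation*}
\alpha(t)^2\les C_0+C_0\int_0^t\alpha(s)^{5/4}\,ds
\end{equation*}
is a Bihari-type inequality whose solution bound grows like $(1+t)^{4/3}$ (set $\Phi=C_0+C_0\int_0^t\alpha^{5/4}$, so $\Phi'\les\Phi^{5/8}$). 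No choice of exponents in your H\"older steps removes the explicit dependence on the length of the time interval, because the right-hand side grows with $t$ even for constant $\alpha$.

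Consequently your last paragraph overstates what the argument delivers: the "Gronwall" closure yields $C(T,\text{data})$ on $[0,T]$ with polynomial growth in $T$, not the time-independent constant $C\big(\|g\|_{H^1},\|n_0\|_{L^2},\|n_1\|_{\hat H^{-1}},\|f\|_{L^2}\big)$ asserted in the proposition (this uniformity is exactly what the paper's proof achieves and uses). The weaker bound would still suffice to continue solutions globally, but it does not prove the statement. Nor can the source term be repaired within your scheme: integrating $\int R(|u|^2)_x$ by parts reintroduces $R_x$, which is not controlled, and chasing the substitution $R_x=R_t-(|u|^2)_x$ around in circles is precisely the computation that the paper short-circuits by combining the Schr\"odinger momentum flux $\partial_x|u_x|^2$ with the wave terms into the single identity \eqref{eq:l}, whose coupling terms are perfect time derivatives. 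If you want to salvage your approach, you must replace the crude Cauchy--Schwarz treatment of $\int_0^t\int_0^\infty R\,(|u|^2)_x$ by this structural cancellation — at which point you have rederived the paper's proof.
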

\begin{proof}
The following identities can be justified by approximation by $H^2\times H^1\times L^2 $ solutions:
\begin{align}
\label{eq:m} &\partial_t|u|^2= -2 \Im(u_x\overline{u})_x ,\\
\label{eq:e}&\partial_t\left(|u_x|^2+n|u|^2+\frac12n^2+\frac12\nu^2\right) =[\nu(n+|u|^2)]_x+2\Re(u_x\overline{u_t})_x,\\
\label{eq:l}&\partial_x\left(|u_x|^2+n^2+\nu^2\right)=2[\nu n]_t-2i\big[(u\overline{u_x})_t-(u\overline{u_t})_x\big].
\end{align}
We start by estimating $\|u\|_{L^2}$. By integrating \eqref{eq:m}   in $ [0,\infty)\times [0,t]$, we obtain
$$
\int_0^\infty |u(x,t)|^2 dx = \int_0^\infty |g(x )|^2 dx+2\Im \int_0^t u_x(0,s) \overline{h(s)} ds=  \int_0^\infty |g(x )|^2 dx.
$$
Using this in \eqref{enerbound} and \eqref{enerbound2}, we conclude that 
$$
E(t)\les  \|u_x\|^2_{L^2}+ \|n\|_{L^2}^2+ \|\nu\|_{L^2}^2+1,
$$
\be\label{enerbound4}
  \|u_x\|^2_{L^2}+ \|n\|_{L^2}^2+ \|\nu\|_{L^2}^2\les E(t)+1,
\ee
where implicit constants depend on $\|g\|_{L^2}$. 
To estimate $E(u,n,\nu)$, we integrate \eqref{eq:e} in $ [0,\infty)\times [0,t]$:
\begin{multline*}
E(t)-E(0) = -\int_0^t \nu(0,s) [f(s) + |h(s)|^2] ds   
-2\Re \int_0^t u_x(0,s) \overline{h^\prime(s)} ds \\
= - \int_0^t \nu(0,s)  f(s)  ds    .
\end{multline*}
Using  Cauchy-Schwarz inequality  we have
\be\label{eq:Ebound}
E(t)-E(0) \les \|f\|_{L^2_{[0,t]}}  J^{1/2}\les J^{1/2},
\ee
where 
$$
J:=\int_0^t|\nu(0,s)|^2 ds.
$$  
 
Finally, to estimate   $J$, we  integrate \eqref{eq:l} in  $ [0,\infty)\times [0,t]$, and obtain the following inequality by discarding the positive terms on the left hand side:
\begin{multline*}
J \leq -2 \int_0^\infty \left[\nu(x,t) n(x,t) -\nu(x,0)n(x,0)\right]dx  \\ + 2i \int_0^\infty u(x,t)\overline{u_x(x,t)} dx - 2 i \int_0^\infty g(x)\overline{g^\prime(x)} dx +2i\int_0^t
h(s)\overline{h^\prime(s)} ds.
\end{multline*}
And hence
\begin{multline*}   J \leq  2\|\nu \|_{L^2_x} \|n \|_{L^2_x}+ 2\|\nu(x,0)\|_{L^2_x} \|n_0\|_{L^2_x}  +2\|u\|_{L^2_x}\|u_x\|_{L^2_x}  + 2\|g\|_{L^2}\|g\|_{H^1}\\
=2\|\nu \|_{L^2_x} \|n \|_{L^2_x}+ 2\|\nu(x,0)\|_{L^2_x} \|n_0\|_{L^2_x}  +2\|g\|_{L^2_x}\|u_x\|_{L^2_x}  + 2\|g\|_{L^2}\|g\|_{H^1}\\
\les E(t)+ E(0)+1.  
\end{multline*}
Using this in \eqref{eq:Ebound}, we have 
$$
E(t)-E(0) \les  \sqrt{E(t)+ E(0)+1},
$$ 
where the implicit constant depend only on boundary data. Therefore
$$
E(t) \les  E(0) +1. 
$$
Using \eqref{enerbound4} we conclude that 
$\|u\|_{H^1}+ \|n\|_{L^2}+ \|\nu\|_{L^2} $ remains bounded. 
\end{proof}

\section{Uniqueness of rough solutions}\label{sec:uniquerough}

We now prove uniqueness in rougher norms in the case $h=0$. It suffices to work with admissible $(s_0,s_1)$ satisfying $0<s_0<\frac12$, $-\frac12<s_1<0$.
Let $\tilde g$ , $\tilde{\tilde g}$ be two $H^{s_0}(\R)$ extensions of $g\in H^{s_0}(\R^+)$, $s_0\in (0,1/2)$. 
Similarly take extensions $\tilde n_0$ , $\tilde{\tilde n}_0$ of $n_0$ in $H^{s_1}$. Note that $n_1=\partial_x \nu$ for some $\nu\in H^{s_1}(\R^+)$. 
We also take extensions of $n_1$ as  $\tilde n_1=\partial_x \tilde \nu$, $\tilde{\tilde n}_1=\partial_x\tilde{\tilde \nu}$ where   $\tilde \nu, \tilde{\tilde \nu}$ are $H^{s_1}$ extensions of $\nu$.

Take a sequence $g_k \in H^2(\R^+)$ converging to $g$ in $H^{s_0}(\R^+)$.
Let $\tilde g_k, \tilde{\tilde g}_k \in H^2(\R)$ be extensions of $g_k$ converging to $\tilde g$ , $\tilde{\tilde g}$ 
in $H^r(\R)$ for $r<s_0$, see Lemma~\ref{lem:ext} below. Similarly construct sequences converging to $\tilde n_0$ , $\tilde{\tilde n}_0$ in $H^{r}(\R)$, $r<s_1$, and to $\tilde n_1$ , $\tilde{\tilde n}_1$ in $\hat H^{r}(\R)$, $r<s_1-1$.

Also take a sequence   $f_k\in H^{1}(\R^+)$ converging to $f$ in $H^{s_1}(\R^+)$. Construct global solutions $(\tilde u_k, \tilde n_k) $  and $(\tilde {\tilde u}_k, \tilde {\tilde n}_k) $ in $H^2(\R)\times H^1(\R)$ with $h=0$. By the uniqueness statement in Section 5, the restriction of the  solutions $(\tilde u_k, \tilde n_k) $  and $(\tilde {\tilde u}_k, \tilde {\tilde n}_k) $   to $\R^+$ are the same. Since, by the fixed point argument,  the solutions $(\tilde u , \tilde n ) $  and $(\tilde {\tilde u} , \tilde {\tilde n} ) $
 are the limits of these solutions in $H^{s_0-}\times H^{s_1-}$, their restriction to $\R^+$ are the same.

\begin{lemma}\label{lem:ext} Fix $-\frac12<s<\frac12$  and $k\in \Z^+$. 
Let $g\in H^s(\R^+)$, $f\in H^k(\R^+)$, and let $g_e$ be an $H^s$ extension of $g$ to $\R$. Then there is an $H^k$ extension $f_e$ of $f$ to $\R$ so that 
$$\|g_e-f_e\|_{H^r(\R)}\les \|g-f\|_{H^s(\R^+)},\,\,\text{ for } r<s.
$$ 
\end{lemma}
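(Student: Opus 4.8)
The plan is to build $f_e$ by gluing $f$ on $\R^+$ to a smooth correction of $g_e$ on $\R^-$, exploiting that on the negative half line one only needs $H^r$-closeness with $r<s$, a strictly weaker demand than $H^s$-closeness. First I would observe that since $\|\cdot\|_{H^r}\le\|\cdot\|_{H^{r'}}$ for $r\le r'$, it suffices to prove the estimate for $r\in(-\tfrac12,s)$; the general case $r<s$ follows by monotonicity. Set $d:=g-f$. Because $k\ge 1>s$ we have $f\in H^k(\R^+)\subset H^s(\R^+)$, so $d\in H^s(\R^+)$ and the right-hand side is meaningful. Fix any $H^k(\R)$ extension $Ef$ of $f$ (one exists by the definition of $H^k(\R^+)$) and put $v:=g_e-Ef$. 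Since $g_e\in H^s(\R)$ and $Ef\in H^k(\R)\subset H^s(\R)$, we get $v\in H^s(\R)$ with $v|_{\R^+}=g-f=d$ and $v|_{\R^-}=:h\in H^s(\R^-)\subset H^r(\R^-)$.

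Next I would define the extension by $f_e:=Ef+G_0$, where $G_0\in C_c^\infty\big((-\infty,0)\big)$ is to be chosen. As a sum of two $H^k(\R)$ functions, $f_e\in H^k(\R)$, and since $G_0$ vanishes on $\R^+$ we have $f_e|_{\R^+}=f$; thus $f_e$ is a genuine $H^k$-extension of $f$ for every such $G_0$. Moreover $g_e-f_e=v-G_0$, and because $G_0$ is supported in $\R^-$ this splits, with disjoint supports, as
$$
g_e-f_e=\chi_{(0,\infty)}\,d+\chi_{(-\infty,0)}\,(h-G_0).
$$

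Then I would estimate the two pieces separately. For the first, the range $-\tfrac12<r<\tfrac12$ lets me invoke Lemma~\ref{lem:Hs0}(i) to get $\|\chi_{(0,\infty)}d\|_{H^r(\R)}\le\|\chi_{(0,\infty)}d\|_{H^s(\R)}\les\|d\|_{H^s(\R^+)}$. For the second, I would use that for $r<\tfrac12$ extension by zero is bounded from $H^r(\R^-)$ into $H^r(\R)$ (the left-half-line analogue of Lemma~\ref{lem:Hs0}(i)) and that $C_c^\infty\big((-\infty,0)\big)$ is dense in $H^r(\R^-)$ in this range. Since $h\in H^r(\R^-)$, I can then choose $G_0$ with $\|h-G_0\|_{H^r(\R^-)}\le\|d\|_{H^s(\R^+)}$, so that $\|\chi_{(-\infty,0)}(h-G_0)\|_{H^r(\R)}\les\|d\|_{H^s(\R^+)}$. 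Adding the two bounds yields $\|g_e-f_e\|_{H^r(\R)}\les\|d\|_{H^s(\R^+)}$ with a constant depending only on $s,r,k$, hence uniform in $f$, which is exactly what the limiting argument needs.

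The conceptual point, and the only genuine obstacle, is the apparent tension that $f_e$ must be smooth ($H^k$) while staying $H^r$-close to the merely $H^s$ function $g_e$: on $\R^-$ one cannot make $f_e$ close to $g_e$ in $H^s$, but one can in the weaker norm $H^r$ with $r<s$, and the density and zero-extension facts valid precisely for $r<\tfrac12$ let the approximation error be absorbed into $\|d\|_{H^s(\R^+)}$. The boundary compatibility at the origin is what forces the use of the fixed extension $Ef$ as a scaffold, with $G_0$ supported away from $0$ so as not to disturb the traces and hence the global $H^k$-regularity. I should note that choosing $G_0$ within the stated tolerance requires $\|d\|_{H^s(\R^+)}>0$; this is harmless in the application of Section~\ref{sec:uniquerough}, where $g_k\neq g$ gives $\|g-g_k\|_{H^{s_0}(\R^+)}>0$ while tending to $0$, thereby forcing $\tilde g_k\to\tilde g$ in $H^r$.
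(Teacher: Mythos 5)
Your proof is correct, and its architecture coincides with the paper's: keep a fixed $H^k$ extension of $f$ on the right and add a smooth correction compactly supported in the open left half-line, exploiting that only $H^r$-closeness with $r<s$ is needed there. Where you differ is in how the key approximation step is handled. The paper introduces an auxiliary near-optimal $H^s(\R)$ extension $h$ of $g-f$, so that $\psi:=g_e-\widetilde f-h$ is an $H^s(\R)$ distribution supported in $(-\infty,0]$, and then proves \emph{from scratch} that such a $\psi$ can be approximated in $H^r$, $r<s$, by $H^k$ functions supported in $(-\infty,0)$: truncation $\chi_{(-\infty,-\delta)}\psi$, uniform $H^s$ bounds via Lemma~\ref{lem:Hs0}, $L^2$ convergence plus interpolation (duality for $-\frac12<s<0$), then mollification; the error is $h+(\psi-\phi)$. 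You instead split $g_e-Ef$ by multiplying with $\chi_{(0,\infty)}$ and $\chi_{(-\infty,0)}$ (legitimate for $|s|<\frac12$ by Lemma~\ref{lem:Hs0}(i) and its left-half-line analogue) and cite, as a known fact, the density of $C^\infty_c\big((-\infty,0)\big)$ in $H^r(\R^-)$ for $|r|<\frac12$. That density statement is true and standard, but it is essentially the paper's Claim rephrased in restriction-space language, so your route is more modular rather than more elementary: to be self-contained you would still have to prove it, e.g.\ by translating $\chi_{(-\infty,0)}v$ to the left and mollifying (translation being strongly continuous on every $H^r$, this variant even avoids the interpolation/duality step the paper uses). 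Two minor points: your caveat about the degenerate case $g=f$ applies equally to the paper, which applies its Claim with $\epsilon=\|g-f\|_{H^s(\R^+)}$ implicitly assumed nonzero; and in both arguments the extension $f_e$ may depend on the chosen $r$ (your $G_0$ is an $H^r$-approximation for a fixed $r$), which is harmless for the limiting argument in Section~\ref{sec:uniquerough}.
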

\begin{proof}
Fix $-\frac12<s<\frac12$ and $k\in \Z^+$. We start with the following \\
Claim. Fix $\psi \in H^s(\R)$ supported in $(-\infty,0]$. For any $\epsilon>0$, there is a function $\phi\in H^k(\R)$ supported in $(-\infty,0)$ such that $\|\phi-\psi\|_{H^r(\R)}<\epsilon$ for $r<s$.\\
To prove this claim first consider the case $0\leq s <\frac12$. Note that 
 $\chi_{(-\infty,-\delta)} \psi \to \psi $ 
in $L^2(\R)$ as $\delta \to 0^+$. Also note by Lemma~\ref{lem:Hs0} that $\|\chi_{(-\infty,-\delta)} \psi\|_{H^s(\R)}\les \|\psi\|_{H^s(\R)}$ uniformly in $\delta$. Therefore
 $\chi_{(-\infty,-\delta)} \psi \to \psi $ in $H^r$ for $r<s$ by interpolation. Now note that by duality $\chi_{(-\infty,-\delta)} \psi \to \psi $ in $H^r$ for $r<s$ also in the case $-\frac12 < s<0$.
 
The claim follows by taking a smooth approximate identity $\rho_n$ supported in $(-\delta,\delta)$ for sufficiently small $\delta$, and letting
$\phi= [\chi_{(-\infty,-\delta)} \psi ] * \rho_n$ for sufficiently large $n$. 

To obtain the lemma from this claim, let $\widetilde f$ be an $H^k$ extension of $f$ to $\R$, and let $h$ be an $H^s$ extension of $g-f$ to $\R$ with $\|h\|_{H^s(\R)}\les \|g-f\|_{H^s(\R^+)}$. Apply the claim to $\psi=g_e-\widetilde f-h$ with $\epsilon=\|g-f\|_{H^s(\R^+)}$. Letting $f_e=\widetilde f +\phi$ yields the claim.  
\end{proof}

\section{Appendix}\label{sec:appendix}

In this appendix we provide the following lemmas   that we used throughout the proof. The first one can be found in \cite{et3}. 
\begin{lemma}\label{lem:sums} If $\beta \geq \gamma \geq 0$ and $\beta + \gamma > 1$, then
\[ \int \frac{1}{\langle x-a_1 \rangle ^\beta \langle x - a_2 \rangle^\gamma} dx \lesssim \langle a_1-a_2 \rangle ^{-\gamma} \phi_\beta(a_1-a_2),  \]
where
\[ \phi_\beta(a)=\begin{cases} 1 & \beta > 1 \\  \log(1 + \langle a \rangle ) & \beta = 1 \\ \langle a \rangle ^{1-\beta} &\beta < 1 .\end{cases} \]
\end{lemma}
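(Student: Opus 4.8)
The plan is to prove this standard convolution-type estimate by first collapsing it to a one-parameter integral and then splitting the line into the regions where each bracket is dominant. By translation invariance, substituting $x\mapsto x+a_2$ reduces the quantity to
$$
I(a):=\int_\R \frac{dx}{\langle x-a\rangle^\beta \langle x\rangle^\gamma},\qquad a:=a_1-a_2,
$$
so the claim becomes $I(a)\les \langle a\rangle^{-\gamma}\phi_\beta(a)$. The elementary one-variable facts I would invoke throughout are
$$
\int_{|y|\le R}\langle y\rangle^{-\delta}\,dy \les
\begin{cases} 1 & \delta>1,\\ \log(1+R) & \delta=1,\\ R^{1-\delta} & \delta<1,\end{cases}
\qquad
\int_{|y|>R}\langle y\rangle^{-\delta}\,dy\les R^{1-\delta}\quad(\delta>1).
$$

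Next I would dispose of the case $|a|\les 1$ separately: there $\langle a\rangle\sim 1$ and $\phi_\beta(a)\sim 1$, while $\langle x-a\rangle\sim\langle x\rangle$, so $I(a)\les \int_\R\langle x\rangle^{-\beta-\gamma}\,dx\les 1$ by the hypothesis $\beta+\gamma>1$, which guarantees integrability at infinity. For $|a|\gg1$ (so $\langle a\rangle\sim|a|$) I would partition $\R$ into three essentially disjoint pieces: Region~I where $|x|\le|a|/2$, Region~II where $|x-a|\le|a|/2$, and Region~III the complement. In Region~I one has $\langle x-a\rangle\sim\langle a\rangle$, so its contribution is $\langle a\rangle^{-\beta}\int_{|x|\le|a|/2}\langle x\rangle^{-\gamma}dx$. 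In Region~II one has $\langle x\rangle\sim\langle a\rangle$, giving $\langle a\rangle^{-\gamma}\int_{|y|\le|a|/2}\langle y\rangle^{-\beta}dy\sim\langle a\rangle^{-\gamma}\phi_\beta(a)$ by the first display; this is the main term and the source of the stated bound. In Region~III both brackets are $\gtrsim\langle a\rangle$, and splitting it further into $|a|/2<|x|<2|a|$ (measure $\les|a|$, each bracket $\gtrsim\langle a\rangle$) and $|x|>2|a|$ (where $\langle x-a\rangle\sim\langle x\rangle\sim|x|$ and the integrand is $\sim|x|^{-\beta-\gamma}$, again using $\beta+\gamma>1$) yields the bound $\les\langle a\rangle^{1-\beta-\gamma}=\langle a\rangle^{-\gamma}\langle a\rangle^{1-\beta}$.

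Finally I would collect the three contributions, using the pointwise inequality $\langle a\rangle^{1-\beta}\les\phi_\beta(a)$ for $|a|\gg1$ (immediate in each of the ranges $\beta<1$, $\beta=1$, $\beta>1$) to absorb the Regions~I and~III terms into the main term $\langle a\rangle^{-\gamma}\phi_\beta(a)$. I expect the only genuine obstacle to be bookkeeping rather than ideas: one must verify that the Region~I term $\langle a\rangle^{-\beta}\int_{|x|\le|a|/2}\langle x\rangle^{-\gamma}dx$ is dominated by the Region~II main term in each sub-case $\gamma<1$, $\gamma=1$, $\gamma>1$, crucially invoking $\beta\ge\gamma$ (for instance, when $\gamma>1$ this forces $\beta>1$ and $\phi_\beta=1$, so $\langle a\rangle^{-\beta}\le\langle a\rangle^{-\gamma}$). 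This is precisely where the asymmetry between $\beta$ and $\gamma$ in the statement enters, and it is the step warranting the most care.
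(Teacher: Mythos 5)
Your proof is correct, and the bookkeeping is complete: the three-region decomposition ($|x|\le|a|/2$, $|x-a|\le|a|/2$, and the complement), with $\beta\ge\gamma$ invoked exactly where needed to absorb the Region~I term into $\langle a\rangle^{-\gamma}\phi_\beta(a)$, is the standard argument for this convolution estimate. The paper itself does not prove this lemma (it is quoted from \cite{et3}), but your splitting is the same one the paper uses in its short proof of the companion Lemma~\ref{lem:sums2}, so your approach matches the paper's in all essentials.
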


\begin{lemma}\label{lem:sums2} If $\alpha, \beta, \gamma \geq 0$ and $\alpha+\beta + \gamma > 1$. Let $\ell:=\max(1,\alpha,\beta,\gamma)$. Then
\[ \int \frac{1}{\langle x- a \rangle^\alpha \langle x \rangle^\beta \la x+a\ra^\gamma} dx \lesssim \langle a \rangle^{-\alpha-\beta-\gamma+\ell }  \la a \ra^{0+},  \]
the term $\la a \ra^{0+}$ can be discarded unless $\max(\alpha,\beta,\gamma)=1$.
\end{lemma}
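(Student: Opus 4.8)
The plan is to reduce to the regime $\langle a\rangle\gg 1$ and to exploit that the three weights are concentrated near the well-separated points $a$, $0$, and $-a$. First I would dispose of the case $|a|\lesssim 1$: there all three brackets $\la x-a\ra$, $\la x\ra$, $\la x+a\ra$ are comparable to $\la x\ra$, so the integral is comparable to $\int_\R \la x\ra^{-\alpha-\beta-\gamma}\,dx$, which converges because $\alpha+\beta+\gamma>1$ and is $O(1)$; this matches the right-hand side since $\la a\ra^{-\alpha-\beta-\gamma+\ell}\sim 1$ there. I may also assume $a\geq 0$ after the change of variables $x\mapsto -x$, which only interchanges $\alpha$ and $\gamma$.

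For $a\gg 1$ I would decompose $\R$ into four pieces: the three intervals $B_c=\{|x-c|\leq a/2\}$ for $c\in\{a,0,-a\}$, which are disjoint up to endpoints since the centers lie at mutual distances $\geq a$ while the radius is $a/2$, together with the far region $\{|x|>3a/2\}$ that is precisely their complement. On each $B_c$ the two weights not centered at $c$ are frozen to size $\la a\ra$, so (for instance) on $B_a$ the integral reduces to $\la a\ra^{-\beta-\gamma}\int_{|y|\leq a/2}\la y\ra^{-\alpha}\,dy$, and similarly on $B_0$ and $B_{-a}$ with $\beta$, resp. $\gamma$, in the one-dimensional exponent. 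Evaluating $\int_{|y|\leq a/2}\la y\ra^{-\alpha}\,dy$ by the three cases $\alpha>1$, $\alpha=1$, $\alpha<1$ (giving $O(1)$, $\log\la a\ra$, and $\la a\ra^{1-\alpha}$ respectively) yields a contribution $\lesssim \la a\ra^{-\alpha-\beta-\gamma+\max(1,\alpha)}$ up to a possible $\log\la a\ra$, and analogously for the other two intervals. On the far region $|x|>3a/2$ one has $a<\frac23|x|$, so all three brackets are comparable to $\la x\ra$, and the contribution is $\int_{|x|>3a/2}\la x\ra^{-\alpha-\beta-\gamma}\,dx\sim\la a\ra^{1-\alpha-\beta-\gamma}$, again of the admissible form since $1\leq\ell$.

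Summing the four contributions gives the bound $\la a\ra^{-\alpha-\beta-\gamma+\ell}$ with $\ell=\max(1,\alpha,\beta,\gamma)$, because each of the four exponents $\max(1,\alpha)$, $\max(1,\beta)$, $\max(1,\gamma)$, and $1$ is at most $\ell$. The only bookkeeping subtlety, and the step I expect to need the most care, is the logarithmic borderline: a factor $\log\la a\ra$ is produced in an interval $B_c$ exactly when the corresponding exponent equals $1$, i.e. only when $\max(\alpha,\beta,\gamma)=1$ (forcing $\ell=1$), in which case it is absorbed into $\la a\ra^{0+}$; when $\max(\alpha,\beta,\gamma)>1$ the strict gain $\ell>1$ dominates any such logarithm and $\la a\ra^{0+}$ can be dropped, and when $\max(\alpha,\beta,\gamma)<1$ no logarithm arises at all. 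This reconciles the statement's claim that the $\la a\ra^{0+}$ is needed only when $\max(\alpha,\beta,\gamma)=1$.
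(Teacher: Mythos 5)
Your proof is correct and follows essentially the same route as the paper, which for $|a|\gg 1$ simply splits the integral into the regions $|x|<|a|/2$, $|a|/2<|x|<2|a|$, and $|x|>2|a|$ — a trivially different cut from your three balls $B_{\pm a}$, $B_0$ plus far region, but the same idea of freezing the non-singular weights at size $\la a\ra$ and evaluating the remaining one-dimensional integral by the cases exponent $>1$, $=1$, $<1$. Your treatment of the logarithmic borderline (absorbed into $\la a\ra^{0+}$ when $\max(\alpha,\beta,\gamma)=1$, dominated by the strict power gain when $\max>1$) correctly fills in the bookkeeping that the paper leaves implicit.
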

 \begin{proof}
The inequality is clear in the case $|a|\les 1$. For $|a|\gg 1$,  the inequality follows by dividing the integral into three pieces: $|x|>2|a|$, $|x|<|a|/2$, 
$|a|/2<|x|<2|a|$.
 \end{proof}

\end{document}